\newtheorem{theorem}{Theorem}[section]
\newtheorem*{theorem*}{Theorem}
\newtheorem*{cor*}{Corollary}
\newtheorem{lemma}[theorem]{Lemma}
\newtheorem{prop}[theorem]{Proposition}
\newtheorem{cor}[theorem]{Corollary}
\theoremstyle{definition}
\newtheorem*{remark*}{Remark}
\newtheorem{defn}[theorem]{Definition}
\newtheorem{rem}[theorem]{Remark}
\newtheorem{example}[theorem]{Example}
\DeclareMathOperator{\id}{id}
\DeclareMathOperator{\tr}{tr}
\DeclareMathOperator*{\colim}{colim}
\DeclareMathOperator{\THH}{THH}
\DeclareMathOperator{\TR}{TR}
\DeclareMathOperator{\ev}{ev}
\DeclareMathOperator{\Hom}{Hom}
\DeclareMathOperator{\Set}{Set}
\DeclareMathOperator{\rat}{rat}
\DeclareMathOperator{\N}{\mathbb{N}}
\DeclareMathOperator{\Z}{\mathbb{Z}}
\DeclareMathOperator{\F}{\mathbb{F}}
\DeclareMathOperator{\Q}{\mathbb{Q}}
\DeclareMathOperator{\bimod}{biMod}
\DeclareMathOperator{\tlog}{tlog}
\DeclareMathOperator{\End}{End}
\DeclareMathOperator{\SEnd}{SEnd}
\DeclareMathOperator{\Mat}{Mat}
\DeclareMathOperator{\Proj}{Proj}
\newcommand{\cyctens}[3]{{#2}\lower.6ex\hbox{${}^{\underset{\scaleto{ #1}{3pt}}{\circledcirc} p^{#3}}$}}
\newcommand{\cycsmash}[3]{{#2}\lower.6ex\hbox{${}^{\underset{\scaleto{ #1}{3pt}}{\varowedge} p^{#3}}$}}
\newcommand{\tens}[3]{{#2}\lower.6ex\hbox{${}^{\underset{\scaleto{ #1}{3pt}}{\otimes} p^{#3}}$}}
\newcommand{\pow}[1]{\left[\kern-0.5ex\left[{#1}\right]\kern-0.5ex\right]}
\newcommand{\xto}{\xrightarrow}
\newtheorem*{thma}{Theorem A}
\newtheorem*{thmb}{Theorem B}
\begin{document}
\begin{center}\LARGE{Witt vectors with coefficients and characteristic polynomials over non-commutative rings}
\end{center}

\begin{center}\Large{Emanuele Dotto, Achim Krause, Thomas Nikolaus, Irakli Patchkoria}
\end{center}

\vspace{.05cm}

\abstract{
For a not-necessarily commutative  ring $R$ we define an abelian group $W(R;M)$ of Witt vectors with coefficients in an $R$-bimodule $M$. These groups generalize the usual big Witt vectors of commutative rings and we prove that they have analogous formal properties and structure. One main result is that $W(R) := W(R;R)$ is Morita invariant in $R$.   

For an $R$-linear endomorphism $f$ of a finitely generated projective $R$-module we define a characteristic element $\chi_f \in W(R)$. This element is a non-commutative analogue of the classical characteristic polynomial and we show that it has similar properties. The assignment $f \mapsto \chi_f$ induces an isomorphism between a suitable completion of cyclic $K$-theory $K_0^{\mathrm{cyc}}(R)$ and $W(R)$. 
}

\vspace{.05cm}


\section*{Introduction}
\phantomsection\addcontentsline{toc}{section}{Introduction}

In this paper we define and study big Witt vectors with coefficients: concretely for a not-necessarily commutative ring $R$ and an $R$-bimodule $M$ we will define an abelian group
$W(R;M)$ called the group of \emph{big Witt vectors} of $R$ with coefficients in $M$. We will start by focusing on the case $M = R$ and set $W(R) := W(R;R)$. 

\begin{itemize}
\item If $R$ is a commutative ring then our group $W(R)$ is the underlying group of the classical ring of big Witt vectors. The latter is a `global' variant of the rings of   $p$-typical Witt vectors.

\item If $R$ is non-commutative then our group $W(R)$ agrees with the big non-commutative Witt vectors introduced by the second and third author in \cite{THHBook} as a global variant of Hesselholt's non-commutative $p$-typical Witt vectors \cite{HesselholtncW, HesselholtncWcorr}. We note that  $W(R)$ is in general only an abelian group, but has an `external product'  $W(R) \otimes W(R) \to W(R \otimes R)$ generalizing the ring structure in the commutative case. 
\end{itemize}
The abelian group $W(R)$ is defined as 
\begin{equation}\label{def_witt}
W(R) := \frac{\left(1 + tR\pow{t}\right)^{\mathrm{ab}}}{ 1 + rst \sim 1 +srt} 
\end{equation}
where $1 + tR\pow{t}$ is the multiplicative group of power series with
constant term 1.\footnote{Here the abelianisation as well as the quotient are taken in separated topological groups with respect to the $t$-adic topology. Concretely that amounts to quotienting by the closures of the subgroups generated by the imposed relations.}  One of our main results is that $W(R)$ is invariant under Morita equivalence in $R$, and we will see that our proof crucially uses the variant of Witt vectors with coefficients. \\

One of our motivations to study these groups is to define characteristic polynomials for endomorphisms over non-commutative rings. Recall that if $R$ is commutative and $A$ is an $(n\times n)$-matrix over $R$ then we have the (inverse) characteristic polynomial
\begin{equation}\label{char_el_eqn}
\chi_A(t) = \det(\id - At) 
\end{equation}
which can be considered as an element in the abelian group $W(R) = 1 + tR\pow{t}$. It has the following properties:
\begin{enumerate}[label=(\roman*)]
\item
\label{eins}
It satisfies the trace property $\chi_{AB} = \chi_{BA}$. In particular $\chi_{SAS^{-1}} = \chi_{A}$ so that it is independent of the choice of basis;
\item
For a matrix of the form
$
A = 
\begin{pmatrix}
  A_1 & * \\
  0 & A_2 
\end{pmatrix}
$
we have $\chi_{A} = \chi_{A_1} \cdot \chi_{A_2}$ and $\chi_{0_n} = 1$;
\item
The negative of the logarithmic derivative
is given by 
\[
- \frac{\chi'_A(t)}{\chi_A(t)} =  \tr(A) + \tr(A^2) t + \tr(A^3) t^2 + ...  \ ;
\]
\item\label{five} The polynomial $\chi_A$ is natural in $R$.
\end{enumerate}

In \S\ref{Section: char polynomial} we generalize $\chi_A$ in two directions: we allow $R$ to be non-commutative and  we replace the matrices $A$ by $R$-linear endomorphisms $f: P \to P$ of arbitrary finitely generated, projective $R$-modules $P$.\footnote{Working out the definition and properties of $\chi_f$ for such endomorphisms $f: P \to P$ over commutative rings $R$ is a nice little exercise for the reader. }

\begin{thma}\label{thm_main}
For every endomorphism $f: P \to P$ of a finitely generated, projective $R$-module $P$ there is an element $\chi_f \in W(R)$ generalizing the inverse characteristic polynomial \eqref{char_el_eqn} and which satisfies the analogues of properties \ref{eins}-\ref{five} above. 
\end{thma}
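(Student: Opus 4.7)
The plan is to reduce the construction of $\chi_f$ to the case where $P$ is free, and then to invoke the Morita invariance $W(M_n(R)) \xrightarrow{\cong} W(R)$ established earlier in the paper. Concretely, for a matrix $A \in M_n(R)$ representing an endomorphism of $R^n$, the formal expression $I - A t$ lies in $1 + t M_n(R)\pow{t}$ and hence represents a class in $W(M_n(R))$; we define $\chi_A \in W(R)$ to be its image under the Morita isomorphism. For a general finitely generated projective $R$-module $P$ and endomorphism $f : P \to P$, choose $Q$ with $P \oplus Q \cong R^n$ and set $\chi_f := \chi_{f \oplus 0_Q}$. Independence of this definition from the choice of $Q$ will be an immediate consequence of property (ii) below, by comparing any two complements $Q, Q'$ via the common splitting $P \oplus Q \oplus Q'$ and using $\chi_0 = 1$.

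Properties (i), (iii) and (iv) are then verified directly at the matrix level. For (i), the identity $\chi_{AB} = \chi_{BA}$ with $A, B \in M_n(R)$ is precisely the defining relation $[1 + XYt] = [1 + YXt]$ of $W(M_n(R))$ applied to $X = A$, $Y = B$; the more general trace property for $f : P \to P'$ and $g : P' \to P$ of different sources and targets is then obtained by padding to a common $R^N$ with zero blocks and invoking (ii). Property (iii) follows by computing the logarithmic derivative in the matrix power series ring $M_n(R)\pow{t}$, which gives $A(I - At)^{-1} = \sum_{n \geq 0} A^{n+1} t^n$, and using that the Morita isomorphism is compatible with matrix trace and with the trace-like structure on $W(R)$. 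Naturality (iv) is automatic from the functoriality of $W$ and of Morita invariance in $R$.

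The heart of the argument is property (ii), which is also what makes the definition of $\chi_f$ for projective $P$ independent of the chosen complement. For the block diagonal case $D = \mathrm{diag}(A_1, A_2)$, one has the factorization $I - Dt = (I - \widetilde{A}_1 t)(I - \widetilde{A}_2 t)$ in $1 + tM_n(R)\pow{t}$, with $\widetilde{A}_i$ the extension of $A_i$ by zero; the two factors commute because their product vanishes, which yields $\chi_D = \chi_{A_1} \chi_{A_2}$. For the block upper triangular case $A = D + N$ with $N$ strictly block-upper-triangular, I use the factorization $I - At = (I - Dt)\bigl(I - (I - Dt)^{-1} N t\bigr)$, whose second factor has strictly upper triangular coefficients since $D$ preserves the upper triangular filtration. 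It therefore suffices to show that any element of $1 + t M_n(R)\pow{t}$ with strictly upper triangular coefficients is trivial in $W(M_n(R))$. The crucial basic case $[1 + r e_{ij} t] = 1$ for $i < j$ follows from the defining relation with $X = e_{ii}$ and $Y = r e_{ij}$, which satisfy $XY = r e_{ij}$ and $YX = 0$; the general case is then handled by successive approximation in the $t$-adic topology, factoring the power series term-by-term into basic factors with further strictly upper triangular corrections. This last step, reducing an arbitrary strictly upper triangular $W$-class to the basic one via careful use of the cyclic relation, is the main technical obstacle.
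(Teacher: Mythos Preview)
Your approach is in the same spirit as the paper's---both rest on Morita invariance---but the paper executes it more directly and avoids the technical obstacle you flag at the end.

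\textbf{Definition.} Rather than choosing a complement $Q$ and working in $W(M_n(R))$, the paper uses the map $W(\End_R(P)) \to W(R)$ from Corollary~\ref{cor:morita} for \emph{arbitrary} finitely generated projective $P$ (not just free). This map exists because the trace property (Proposition~\ref{prop:traceinvariance}) gives functoriality of $W$ along any additive functor $\Proj_S \to \Proj_R$, in particular along $(-)\otimes_{\End_R(P)} P$. One then sets $\chi_f$ to be the image of $1 - ft \in W(\End_R(P))$. This sidesteps the need to prove independence of the complement and the compatibility of the Morita isomorphisms $W(M_n(R)) \cong W(R)$ for varying $n$, both of which your outline would have to address.

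\textbf{Additivity.} For property~(ii) the paper's argument is shorter and avoids your successive-approximation step entirely. Working with $2\times 2$ block matrices in $\End_R(P_1\oplus P_3)$, one first observes that for $N = \left(\begin{smallmatrix}0 & \rho\\ 0 & 0\end{smallmatrix}\right)$ we have $\tau(N) = \tau(eN) = \tau(Ne) = \tau(0) = 1$ where $e = \left(\begin{smallmatrix}1 & 0\\ 0 & 0\end{smallmatrix}\right)$, using only the defining relation $\tau(ab)=\tau(ba)$. Then one checks the \emph{degree-one} factorization
\[
1 - \begin{pmatrix} f_1 & \rho\\ 0 & f_3\end{pmatrix} t
= \left(1 - \begin{pmatrix}0&0\\0&f_3\end{pmatrix}t\right)
  \left(1 - \begin{pmatrix}0&\rho\\0&0\end{pmatrix}t\right)
  \left(1 - \begin{pmatrix}f_1&0\\0&0\end{pmatrix}t\right),
\]
which holds on the nose since all the relevant cross-products vanish. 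This gives $\chi_{f_2}=\chi_{f_1}+\chi_{f_3}$ immediately.

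By contrast, your factorization $(I-At)=(I-Dt)\bigl(I-(I-Dt)^{-1}Nt\bigr)$ produces a second factor whose coefficients are strictly upper triangular \emph{in all $t$-degrees}, and killing these one degree at a time via basic factors $(1+re_{ij}t^k)$ requires the relation at $t^k$ for $k>1$, not just the defining relation at $t^1$. That is available (Lemma~\ref{lem:otherrelations}(3) or~(4)), and indeed the quickest fix is to apply Lemma~\ref{lem:otherrelations}(4) directly to your second factor: writing $g=(I-Dt)^{-1}Nt$ and $P_2$ for the projection onto the second block, one has $g=gP_2$ and $P_2g=0$, so $(1-g)=(1-gP_2)\sim(1-P_2g)=1$. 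But then you have essentially reproduced the paper's one-step argument; the successive approximation is unnecessary.
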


We define $\chi_f$ by an appropriate version of  formula \eqref{char_el_eqn}  using a non-commutative variant of the determinant (which we also construct). Before we explain this strategy in more detail, let us note that 
an immediate corollary of Theorem A is that the assignment $f \mapsto \chi_f$ defines a map
\[
K_0^{\mathrm{cyc}}(R) \to W(R)
\]
where $K_0^{\mathrm{cyc}}(R)$ is the zero'th cyclic $K$-theory group of $R$ (see Definition \ref{cyclictrace}). 
Such a map was previously constructed using homotopy theoretic methods, notably the cyclotomic trace, and our main motivation was to give a purely algebraic description of this map. \\

In order to prove Theorem A, i.e. to define $\chi_f$, the Morita invariance of non-commutative Witt vectors is used in an essential way: the polynomial $(\id - ft )$ can naturally be considered as an element of $W(\End_R(P))$. By Morita invariance we have a canonical map 
\begin{equation}\label{map_one}
W(\End_R(P)) \to W(R)
\end{equation}
so that we simply define $\chi_f$ as the image of  $(\id - ft )$ under the map \eqref{map_one}. 
The map \eqref{map_one} in turn is a special case of the fact that for every additive functor $\Proj_S \to \Proj_R$ between categories of finitely generated, projective modules over rings $S$ and $R$, we get an induced map $W(S) \to W(R)$ on Witt vectors. 
Given the definition of $W(R)$ this is highly non-obvious: 
the idea is to first introduce groups $W(R; M)$ of Witt vectors with coefficients in a bimodule $M$ by 
replacing the power series ring in \eqref{def_witt} by the completed tensor algebra of $M$ over $R$. Then the main result, which we prove in \S\ref{sec:Morita}, is that this construction satisfies the \emph{trace property} (here we use terminology from Kaledin inspired by work of  Ponto):
 \begin{thmb}
 For an $S$-$R$-bimodule $M$ and an $R$-$S$-bimodule $N$ there is an isomorphism
 \[
 W(S; M \otimes_R N) \cong W(R;  N \otimes_S M) \ .
 \]
 \end{thmb}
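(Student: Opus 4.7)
The plan is to construct the isomorphism directly by cyclically shifting tensor factors, exploiting the obvious observation that the alternating word $MNMN\cdots$ of length $2n$ represents the same cyclic word whether read starting from $M$ or from $N$.

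First I would unpack the definition: writing $\widehat{T}_R(M) := \prod_{n\geq 0} M^{\otimes_R n}$, the group $W(R;M)$ is the abelianization of the topological multiplicative group $1+\widehat{T}_R(M)_+$ modulo the closed subgroup generated by $1+rsm \sim 1+smr$. The stated relation together with abelianization should imply cyclic invariance in every tensor degree: $1+m_1\otimes\cdots\otimes m_n \sim 1+m_n\otimes m_1\otimes\cdots\otimes m_{n-1}$. Consequently $W(R;M)$ carries a natural decreasing filtration by tensor degree whose associated graded in degree $n$ is the cyclic coinvariants $(M^{\otimes_R n})_{C_n}$.

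The key combinatorial observation is that the degree-$n$ piece
\[
(M\otimes_R N)^{\otimes_S n}\;=\;M\otimes_R N\otimes_S M\otimes_R\cdots\otimes_S M\otimes_R N
\]
of $\widehat{T}_S(M\otimes_R N)$, and the degree-$n$ piece
\[
(N\otimes_S M)^{\otimes_R n}\;=\;N\otimes_S M\otimes_R N\otimes_S\cdots\otimes_R N\otimes_S M
\]
of $\widehat{T}_R(N\otimes_S M)$, are the same length-$2n$ alternating word, read from two different starting positions; their $C_n$-coinvariants are canonically identified via a one-symbol shift. This already produces an isomorphism of associated gradeds.

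Concretely, I would define $\phi\colon W(S;M\otimes_R N)\to W(R;N\otimes_S M)$ on homogeneous multiplicative generators by
\[
\phi\bigl(1+(m_1\otimes n_1)\otimes_S\cdots\otimes_S(m_n\otimes n_n)\bigr)\;=\;1+(n_1\otimes m_2)\otimes_R(n_2\otimes m_3)\otimes_R\cdots\otimes_R(n_n\otimes m_1),
\]
together with the analogous $\psi$ in the reverse direction. Well-definedness of $\phi$ on the cyclic relation of the source is immediate, since a cyclic rotation of the source generator is sent to a cyclic rotation of the image; moreover, $\phi\psi$ (respectively $\psi\phi$) acts on a homogeneous generator as a double cyclic shift of a length-$2n$ word, hence as the identity after passing to cyclic coinvariants.

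The main obstacle will be verifying that $\phi$ extends from homogeneous generators to a well-defined group homomorphism on the full topological group $1+\widehat{T}_S(M\otimes_R N)_+$. The difficulty is that cyclic shift does not commute with juxtaposition of tensors: $\phi(1+\mathbf{x}\otimes\mathbf{y})$ generally differs from $\phi(1+\mathbf{x})\cdot\phi(1+\mathbf{y})$ by a permutation of tensor factors that is not a single cyclic rotation, so the cyclic relation on the target does not immediately absorb the discrepancy. To handle this I would combine two ingredients: first, an explicit presentation of $W(R;M)$ via a preferred set of multiplicative generators (one per cyclic class of homogeneous tensors) with carefully chosen relations that tell us how products of generators reduce; and second, a degree-by-degree induction along the filtration, using the already-established graded isomorphism to promote the filtered morphism to a genuine isomorphism via completeness of the $t$-adic topology. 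I expect the careful bookkeeping of how the tensor-algebra product interacts with cyclic rotation to be the technical heart of the proof.
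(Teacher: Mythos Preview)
Your map $\phi$ is exactly the one the paper constructs, and you have correctly identified the central difficulty: the cyclic shift is not a ring homomorphism on $\widehat{T}$, so there is no map on $\widehat{S}$ to descend. But your proposed strategy for overcoming this has a genuine gap.

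First, your claim about the associated graded is wrong. The $n$-th graded piece of $W(R;M)$ is \emph{not} in general $(M^{\circledcirc_R n})_{C_n}$ (note also that $C_n$ acts only on the cyclic power $M^{\circledcirc_R n}$, not on $M^{\otimes_R n}$); the paper explicitly warns, right after computing leading terms of commutators, that it can be strictly smaller, citing an example of Hesselholt. So the ``already-established graded isomorphism'' is not established; and even if it were, an isomorphism on gradeds only helps \emph{after} you have constructed a filtered map---it cannot produce one.

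Second, no tractable presentation of the kind you hope for exists: the multiplicative structure of $\widehat{S}$ is highly nonlinear in the $\tau_k$, and this bookkeeping is precisely what the paper avoids.

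The paper's mechanism is to linearise via the ghost map $\tlog$. One resolves all rings and bimodules by free ones via reflexive coequalizers (with which $W$ commutes as a functor to Hausdorff topological groups), reducing to the free case, where $\tlog\colon W(R;M)\hookrightarrow\prod_n (M^{\circledcirc_R n})^{C_n}$ is a topological embedding. On ghost components the degreewise cyclic shift is an \emph{additive} isomorphism, so one only needs to check that it sends $\tlog(\tau_k(a))$ to $\tlog(\tau_k(\text{shifted }a))$, which is a direct computation from the formula $\tlog(1-a_k t^k)=\sum_i \tr_{C_i}^{C_{ik}} a_k^i\, t^{ik}$. This defines the map $T$ on the image of $\tlog$, hence on $W$, and the inverse is built the same way. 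The key point you are missing is that $\tlog$ converts the intractable multiplicative problem into a trivial additive one.
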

Using this result and the fact that every additive functor $\Proj_S \to \Proj_R$ is of the form $-\otimes_S M$ one formally gets an induced map $W(S) \to W(R)$, see Corollary \ref{cor:morita}.

Besides the trace property, we also generalize the structures present on classical Witt vectors of commutative rings, such as multiplication, Frobenius and Verschiebung maps, to the groups $W(R; M)$. The analogues of those structures in our setting are `external', for example the $p$-th Frobenius $F_p$ is a map
$W(R; M) \to W(R; M^{\otimes_R p})$ (see \S\ref{sec:operators}). We also define a ghost component map which is essentially given by the logarithmic derivative (see Proposition \ref{prop:tlogspecialvalues}) as well as $p$-typical and truncated Witt vectors with coefficients for non-commutative rings (see \S\ref{sec:truncated}). \\

We note that characteristic polynomials (and determinants) for non-commutative rings have been considered before by Ranicki \cite{Ranicki} and Sheiham \cite{Sheiham, Sheiham2}. We reformulate their approach and compare it to ours in \S\ref{sec_ratWitt}. Let us quickly summarize the situation: 
for commutative rings $R$ the characteristic polynomial $\chi_f$ is a polynomial rather than a power series in $W(R) = 1 + tR\pow{t}$.  The subgroup  $W^{\rat}(R) \subseteq W(R)$ generated by polynomials is called the group of rational Witt vectors (and it is a subring). Then the fact that $\chi_f$ is a polynomial shows that the element $\chi_f$ as well as the image of $K_0^{\mathrm{cyc}}(R) \to W(R)$ lie in this subgroup. In the non-commutative situation this unfortunately turns out to be false: in general  $\chi_f \in W(R)$ can not be represented by a polynomial! 

However, we can still define a group $W^{\rat}(R)$ of rational Witt vectors for non-commutative rings (Definition \ref{def_ratWitt}) together with a not-necessarily injective homomorphism $W^{\rat}(R) \to W(R)$ and a lift $\chi_f^{\rat}$ of $\chi_f$. In fact this map is a completion  and the assignment $f \mapsto \chi^{\rat}_f$ defines an isomorphism between the groups $K_0^{\mathrm{cyc}}(R)$ and $W^{\rat}(R)$ as shown by Sheiham, generalizing earlier work of Almkvist. Unfortunately, in order to establish the existence and the properties of the group $W^{\rat}(R)$ as well as the element $\chi^{\rat}_f$ one crucially uses cyclic $K$-theory and a version of the Gauss algorithm. 
We have not been able to give a satisfactory, self-contained treatment of $W^{\rat}(R)$ and $\chi^{\rat}_f$ similar to our treatment of $W(R)$ and $\chi_f$ (see Remark \ref{rem_rat_trace}). 

\subsection*{Relation to other work} 

As indicated before, our definition of $W(R;M)$ was inspired by topological constructions. We will prove the precise connection in a forthcoming companion paper \cite{WittII}. More precisely we will show that there is a natural isomorphism
\begin{equation}\label{isoTR}
W(R;M) \cong \pi_0\TR(R;M).
\end{equation}
Here the spectrum $\TR(R;M)$ was defined by Lindenstrauss-McCarthy in \cite{LMcC} using topological Hochschild homology $\THH(R;M)$ with its `external' cyclotomic structure. For $M = R$ the spectrum $\TR(R;R)$ is the spectrum $\TR(R)$ studied by Hesselholt-Madsen, and in this case our isomorphism \eqref{isoTR} recovers and generalizes their results  \cite[3.3]{Wittvect} as well as the non-commutative analogue of Hesselholt \cite{HesselholtncW, HesselholtncWcorr}. A special case of the isomorphism \eqref{isoTR} lets us compute $\pi_0$ of the  Hill--Hopkins--Ravenel norm \cite{HHR} for cyclic groups. For example, we get for any connective spectrum $X$ an isomorphism
\[
\pi^{C_{p^n}}_0\left(  N_{e}^{C_{p^n}}X \right) \cong W_{p,n}(\mathbb{Z}; \pi_0 X) .
\]
Finally, Kaledin defines in  \cite{KaledinWpoly} (see also \cite{KaledinncW}) abelian groups $\widetilde{W}_n(V)$ of `polynomial Witt vectors' for a vector space $V$ over a  perfect field $k$ of characteristic $p$. We will also show in the forthcoming paper \cite{WittII} that his group $\widetilde{W}_n(V)$ is isomorphic to our group $W_{p,n}(k; V)$ of truncated, p-typical Witt vectors with coefficients.

\newpage
\tableofcontents

\subsection*{Acknowledgements}

The whole approach of the current paper (especially our use of the trace property) is influenced by the work of Kaledin on the subject. We would like to thank him for writing these inspiring papers.  We would also like to thank Lars Hesselholt for the idea of defining non-commutative Witt vectors and many helpful discussions and questions. Finally, we are grateful to Christopher Deninger for helpful comments on an earlier version of this paper.

The first and the fourth authors were supported by the German Research Foundation Schwer\-punktprogramm 1786 and by the Hausdorff Center for Mathematics at the University of Bonn.
The second and third authors were funded by the Deutsche Forschungsgemeinschaft (DFG, German Research Foundation) under Germany's Excellence Strategy EXC 2044 390685587, Mathematics M\"unster: Dynamics--Geometry--Structure.

\section{Big Witt vectors with coefficients}

In this section we define for any pair of a ring $R$ and a bimodule $M$ an abelian group $W(R;M)$ of Witt vectors of $R$ with coefficients of $M$. For a commutative ring $R$ and $M=R$ the group $W(R;R)$ recovers the usual additive group of (big) Witt vectors. For general $R$, $W(R;R)$ therefore forms a noncommutative analogue of Witt vectors, which was in the $p$-typical case first considered by Hesselholt \cite{HesselholtncW}. Like their commutative counterpart, our Witt vectors with coefficients carry additional structure, namely Verschiebung and Frobenius maps, which interact with coefficients in an interesting way, as well as an ``external'' multiplication map. But there is also additional structure which is not seen in the classical picture, namely a residual $C_n$-action if we take coefficients of the form $M^{\otimes_R n}$, and, more generally, \emph{trace property isomorphisms} $W(R;M\otimes_S N)\xto{\cong} W(S; N\otimes_R M)$. These imply that $W(R;R)$ is Morita invariant in $R$.

\subsection{Preliminaries: The category of bimodules}\label{secprelim}

We will consider the category $\bimod$ of pairs $(R;M)$ where $R$ is a ring (unital, associative, but not necessarily commutative) and $M$ is an $R$-bimodule. A morphism $(R;M)\to (R';M')$ is a pair $(\alpha;f)$ where $\alpha : R\to R'$ is a ring homomorphism and $f : M\to \alpha^{*}M'$ is a map of $R$-bimodules, where $\alpha^{*}$ is the restriction of scalars. We will often denote a morphism only by $f$ and keep $\alpha$ implicit.

Given a bimodule $(R;M)$ and an integer $n\geq 1$, we define an $R$-bimodule $M^{\otimes_Rn}$ and an abelian group $M^{\circledcirc_Rn}$ respectively by
\[
M^{\otimes_Rn}=\underbrace{M\otimes_R M\otimes_R\dots\otimes_R M}_{n} \ \ \ \ \ \  \ \ \mbox{and} \ \ \ \  \ \ \ \  M^{\circledcirc_Rn}=M^{\otimes_Rn}/[R,M^{\otimes_Rn}]
\vspace{-.3cm}
\]
where $[R,M^{\otimes_Rn}]$ is the abelian subgroup generated by the elements $rm-mr$ for $r\in R$ and $m\in M^{\otimes_Rn}$. We think of $M^{\circledcirc_Rn}$ as $n$ copies of $M$ tensored together around a circle, and these have a natural action of the cyclic group $C_{n}$ where a chosen generator $\sigma \in C_n$ acts by
\[
\sigma(m_1\otimes\dots\otimes m_{n-1}\otimes m_n):=m_n\otimes m_1\otimes\dots\otimes m_{n-1}.
\]

\begin{example}
When $n=1$ we have that $M^{\circledcirc_R 1}=M/[R,M]$. When $M=R$ there is a canonical isomorphism $R^{\circledcirc_R n}\cong R/[R,R]$ with the quotient by the additive subgroup of commutators, for all $n\geq 1$. If $R$ is commutative and $M$ is an $R$-module considered as a bimodule, then $M^{\circledcirc_R n}\cong M^{\otimes_R n}$.
\end{example}

\begin{defn}\label{freeres}
A bimodule $(R;M)$ is called free if $R$ is a free ring and $M$ is a free $R$-bimodule. A free resolution of $(R;M)$ is a reflexive coequalizer
\[
\xymatrix{(R_1;M_1)\ar@<.7ex>[r]\ar@<-.7ex>[r]&(R_0;M_0)\ar[l]\ar@{->>}[r]&(R;M)}
\]
in the category of bimodules, where $(R_0;M_0)$ and $(R_1;M_1)$ are free.
\end{defn}

\begin{rem}
\label{rem:freeres}
It turns out that reflexive coequalizers in $\bimod$ are computed on underlying sets. That is, $(R;M)$ is a reflexive coequalizer as in Definition \ref{freeres} if and only if the underlying diagrams
\[
\xymatrix{R_1\ar@<.7ex>[r]\ar@<-.7ex>[r]&R_0\ar[l]\ar@{->>}[r]&R}\ \ \ \ \ \ \ \mbox{and}  \ \ \ \ \ \ \ \ \xymatrix{M_1\ar@<.7ex>[r]\ar@<-.7ex>[r]&M_0\ar[l]\ar@{->>}[r]&M}
\]
are reflexive coequalizers of sets (or equivalently of abelian groups, or for the first one of rings). 
To see this, observe that the category $\bimod$ is equivalent to algebras of an operad with two colours (one for the ring, one for the bimodule) in abelian groups.  Thus sifted colimits are computed on underlying pairs of abelian groups. Finally the forgetful functor from abelian groups to sets commutes with sifted colimits.

It follows that any object $(R;M)$ of $\bimod$ admits a free resolution, that can be constructed by taking $R_0=\Z\{R\}$ and $R_1=\Z\{\Z\{R\}\}$ to be the free rings respectively on the underlying sets of $R$ and $\Z\{R\}$, $M_0$ the free $R_0$-bimodule on the underlying set of $M$, and $M_1$ the free $R_1$-bimodule on the underlying set of $M_0$. This is the canonical resolution associated to the adjoint pair
\[
\xymatrix{U : \bimod \ar@<.5ex>[r]&\Set\times \Set\ar@<.5ex>[l] : F}
\]
where $U$ sends $(R;M)$ to the pair of underlying sets $(R;M)$, and $F(X,Y)=(\Z\{X\};\Z\{X\}^e(Y))$. The associated diagram
\[
  \xymatrix{FUFU(R;M)\ar@<.7ex>[r]\ar@<-.7ex>[r]&FU(R;M)\ar[l]\ar@{->>}[r]&(R;M)} 
\]
  exhibits $(R;M)$ as reflexive coequalizer, since this can be computed on underlying pairs in $\Set\times \Set$, where the diagram becomes split by the unit of the adjunction.
\end{rem}

\begin{lemma}\label{restorfree}
For a free bimodule $(S;Q)$, the groups $(Q^{\circledcirc_{S}n})^{C_n}$ and $(Q^{\circledcirc_{S} n})_{C_n}$ are torsion free. In particular any bimodule $(R;M)$ can be resolved by $(S;Q)$ and $(S';Q')$ with torsion-free $(Q^{\circledcirc_{S} n})_{C_n}$ , $(Q'^{\circledcirc_{S'} n})_{C_n}$, $(Q^{\circledcirc_{S} n})^{C_n}$ and $(Q'^{\circledcirc_{S'} n})^{C_n}$.

\end{lemma}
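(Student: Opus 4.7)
The plan is to describe $Q^{\circledcirc_{S}n}$ explicitly as a $\Z[C_n]$-permutation module; torsion-freeness of invariants and coinvariants then follows instantly. Writing $S=\Z\langle X\rangle$ (the free associative ring on a set $X$, with $\Z$-basis the free monoid $X^{*}$) and $Q = S\otimes\Z\{Y\}\otimes S$, the iterated tensor power $Q^{\otimes_S n}$ is a free $S$-bimodule of the form $S\otimes V_n\otimes S$, where $V_n$ alternates $n$ copies of $\Z\{Y\}$ with $n-1$ copies of $S$. For a general free $S$-bimodule $S\otimes V\otimes S$, the defining relation $sm\sim ms$ gives an isomorphism $(S\otimes V\otimes S)/[S,-]\cong V\otimes S$ via $a\otimes v\otimes b\mapsto v\otimes ba$, with inverse $v\otimes c\mapsto 1\otimes v\otimes c$. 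Specialising to $V=V_n$, the abelian group $Q^{\circledcirc_S n}$ is free on sequences $(y_1,w_1,\dots,y_n,w_n)$ with $y_i\in Y$ and $w_i\in X^{*}$.

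Next I would compute the $C_n$-action on this basis. Representing $1\otimes y_1\otimes w_1\otimes\cdots\otimes y_n\otimes w_n$ as $m_1\otimes_S\cdots\otimes_S m_n$ with $m_i\in Q$, applying the shift $\sigma$, and renormalising via $sm\sim ms$, a direct calculation yields $(y_1,w_1,\dots,y_n,w_n)\mapsto (y_n,w_n,y_1,w_1,\dots,y_{n-1},w_{n-1})$, i.e.\ cyclic rotation of the pairs. Hence $Q^{\circledcirc_S n}\cong\bigoplus_O\Z[C_n/H_O]$ as $\Z[C_n]$-modules, the sum ranging over the $C_n$-orbits $O$ on the basis, with $H_O$ the stabiliser of a representative. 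On each such summand the invariants (spanned by the orbit sum) and the coinvariants (one class per orbit) are each isomorphic to $\Z$, so both $(Q^{\circledcirc_S n})^{C_n}$ and $(Q^{\circledcirc_S n})_{C_n}$ are free abelian, and in particular torsion-free.

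For the final clause, Remark \ref{rem:freeres} supplies a reflexive coequaliser $(R_1;M_1)\rightrightarrows (R_0;M_0)\twoheadrightarrow (R;M)$ with free bimodules $(R_i;M_i)$, and we may take $(S;Q):=(R_0;M_0)$ and $(S';Q'):=(R_1;M_1)$; the first part then applies to both. The step I expect to be the main obstacle is verifying the cyclic-shift formula on the normalised basis: $\sigma$ is defined on the tensor factors of $Q^{\otimes_S n}$, while the identification $Q^{\circledcirc_S n}\cong V_n\otimes S$ has moved one $S$-piece around the cycle, so keeping track of what this transformation does to $\sigma$ requires careful book-keeping with the tensor products over $S$ to ensure that the claimed permutation action is really the rotation action and not some twist of it.
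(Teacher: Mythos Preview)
Your proposal is correct and follows essentially the same strategy as the paper: both identify $Q^{\circledcirc_S n}$ as the free abelian group on $Y^{n}\times (X^{*})^{n}$ with $C_n$ acting by simultaneous cyclic rotation, and then observe that a permutation $\Z[C_n]$-module has free (hence torsion-free) invariants and coinvariants. The paper organises the computation slightly differently, writing $Q=\bigoplus_Y S\otimes_\Z S$ and reducing to the identification $(S\otimes_\Z S)^{\circledcirc_S n}\cong S^{\otimes_\Z n}$, whereas you compute $Q^{\otimes_S n}=S\otimes V_n\otimes S$ directly and pass to $V_n\otimes S$; these are two bookkeepings of the same calculation.

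Your flagged ``main obstacle'' is not actually an obstacle: with $m_i=1\otimes y_i\otimes w_i\in Q$ one has $m_1\otimes_S\cdots\otimes_S m_n$ mapping to $1\otimes y_1\otimes w_1\otimes\cdots\otimes y_n\otimes w_n$, and applying $\sigma$ before normalising gives $1\otimes y_n\otimes w_n\otimes y_1\otimes w_1\otimes\cdots\otimes y_{n-1}\otimes w_{n-1}$, which is already in normalised form (leading $S$-slot equal to $1$), so no further use of $sm\sim ms$ is needed and the rotation of pairs is immediate.
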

\begin{proof}
Say $S$ is a free ring on the set $X$ of generators, and $Q$ is the free $S$-bimodule on the set $Y$ of generators, i.e. $\bigoplus_{Y} S\otimes_{\Z} S$. Then it is easily seen that $Q^{\circledcirc_{S}n}$ is a direct sum $\bigoplus_{Y^{\times n}} (S\otimes_{\Z} S)^{\circledcirc_{S} n}$, where $C_n$ acts on the index set $Y^{\times n}$ by permuting the factors cyclically, and on the summands by the $C_n$ action on the cyclic tensor product. The cyclic tensor product $(S\otimes_{\Z} S)^{\circledcirc_{S} n}$ is equivalent to $S^{\otimes_{\Z} n}$ with $C_n$ acting by cyclic permutation.

As an abelian group $S$ is free on a set $T$, and $S^{\otimes_{\Z} n}$ is free abelian on the set $T^{\times n}$, with $C_n$ acting by permutation. Thus the whole $Q^{\circledcirc_{S}n}$ is a free abelian group on the set $Y^{\times n} \times T^{\times n}$, with $C_n$ acting by cyclic permutation on both factors. So the $C_n$-invariants are torsion free, because they are a subgroup, and the coinvariants are the free abelian group on the set $(Y^{\times n} \times T^{\times n})/ C_n$, thus also torsion free.
\end{proof}

The category $\bimod$ has a monoidal structure, which is defined by the componentwise tensor product
\[
(R;M)\otimes (R';M'):=(R\otimes_{\Z}R';M\otimes_{\Z}M')
\]
where $M\otimes_{\Z}M'$ has the obvious $R\otimes_{\Z}R'$-bimodule structure.

\begin{lemma}\label{monbimod}
 The category of monoids in $\bimod$ is isomorphic to the category of pairs $(R;M)$ where $R$ is a commutative ring and $M$ is a ring equipped with two ring homomorphisms $\eta_l : R\to M$ and $\eta_r : R\to M$ which are central (i.e. two different $R$-algebra structures on $M$). 
In particular it contains the category of $R$-algebras $M$ over a commutative ring $R$ as a full subcategory.
\end{lemma}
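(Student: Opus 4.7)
The plan is to directly unpack the definition of a monoid in $(\bimod, \otimes)$ and match it with the classical data. The monoidal unit is $(\Z;\Z)$ and the product is componentwise tensor over $\Z$, so a monoid $(R;M)$ consists of morphisms $(\mu_R,\mu_M) : (R\otimes_\Z R;M\otimes_\Z M)\to (R;M)$ and $(\eta_R,\eta_M) : (\Z;\Z)\to (R;M)$ satisfying associativity and unitality. I would first observe that $R$ must be commutative: $\mu_R$ is a unital ring homomorphism whose restriction along each canonical inclusion is the identity by unitality, so $\mu_R(r\otimes s)=\mu_R((r\otimes 1)(1\otimes s))=rs$ and also $\mu_R(r\otimes s)=\mu_R((1\otimes s)(r\otimes 1))=sr$, whence $rs=sr$. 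Conversely, multiplication on a commutative ring is a ring map and produces the required $\mu_R$.

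Next, I would extract a ring structure on $M$ by defining multiplication as $m_1 m_2:=\mu_M(m_1\otimes m_2)$ with unit $1_M:=\eta_M(1)$; bilinearity, associativity, and unitality follow formally from the monoid axioms. The two homomorphisms $\eta_l,\eta_r:R\to M$ are defined via the $R$-bimodule action on $1_M$, namely $\eta_l(r):=r\cdot 1_M$ and $\eta_r(r):=1_M\cdot r$. The key identity is that $\mu_M$ is an $R\otimes R$-bimodule map covering $\mu_R$, which translates to $\mu_M(r m_1\otimes s m_2)=rs\cdot\mu_M(m_1\otimes m_2)$, with analogous identities for the right action. From this one reads off that $\eta_l$ is a ring homomorphism, since $\eta_l(r)\eta_l(s)=\mu_M(r 1_M\otimes s 1_M)=rs\cdot 1_M=\eta_l(rs)$, and that the original bimodule action coincides with multiplication by $\eta_l$ on the left: $\eta_l(r)m=\mu_M(r 1_M\otimes m)=r\cdot\mu_M(1_M\otimes m)=r\cdot m$. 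A symmetric computation gives $m\eta_l(r)=r\cdot m$, so $\eta_l(r)$ is central in $M$, and the analogous statements hold for $\eta_r$.

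Conversely, given a commutative ring $R$, a ring $M$, and central ring maps $\eta_l,\eta_r:R\to M$, the $R$-bimodule structure $r\cdot m:=\eta_l(r)m$ and $m\cdot r:=m\eta_r(r)$ together with $\mu_R=\text{mult}_R$ and $\mu_M=\text{mult}_M$ visibly assemble into a monoid in $\bimod$; the bimodule compatibility of $\mu_M$ reduces precisely to the centrality of $\eta_l$ and $\eta_r$. The two constructions are mutually inverse and extend to functors by tracing morphisms through the correspondence, giving the claimed isomorphism of categories; the final sentence about $R$-algebras is the subcase $\eta_l=\eta_r$. The argument is essentially a routine diagram chase, and the only point requiring care is correctly matching the $R\otimes R$-bimodule structure on $M\otimes_\Z M$ with multiplication on $M$, which is exactly where the centrality of $\eta_l$ and $\eta_r$ enters.
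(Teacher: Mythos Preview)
Your proof is correct and follows essentially the same route as the paper's: both unpack the monoid data, deduce commutativity of $R$ from $\mu_R$ being a ring map (the paper phrases this as Eckmann--Hilton, you write out the computation), define $\eta_l,\eta_r$ via the action on $1_M$, and extract centrality from the $R\otimes R$-bimodule compatibility of $\mu_M$. The converse direction is handled identically in both.
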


\begin{proof}
A monoid structure on a bimodule $(R;M)$ is a morphism
\[
\mu=(\mu_R;\mu_M) : (R\otimes R;M\otimes M)\longrightarrow (R;M),
\]
and a unit map $\eta=(\eta_R;\eta_M) : (\Z;\Z)\to (R;M)$, subject to the associativity and unitality axioms.
The map $\mu_R$ and the unit $\eta_R$ then endow the ring $R$ with the structure of a monoid with respect the tensor product of rings. By the Eckmann-Hilton theorem $\mu_R$ is the multiplication of $R$ and $R$ must be a commutative ring. The map $\mu_M$ is a map of $R\otimes R$-bimodules
\[
\mu_{M} : M\otimes M\longrightarrow \mu^{*}_RM,
\]
which endows $M$ with a ring structure $m\star n:=\mu_M(m\otimes n)$. The bimodule structure determines and is determined by the ring homomorphisms $\eta_l(a)=a\cdot 1$ and $\eta_r(b)=1\cdot b$ so that we have $a \cdot m = \eta_l(a) \star m$ and $m \cdot b = m \star \eta_r(b)$.
Since  $\mu_M$ is a map of left $R\otimes R$-modules we also have 
\[
\eta_l(a) \star m=a \cdot m = (1 \cdot a)\cdot (m \star 1) = (1\cdot m) \star (a\cdot 1) = m \star \eta_l(a) 
\]
which shows that $\eta_l$ is central. Similarly we see that $\eta_r$ is central. 

Conversely for arbitrary central ring morphisms 
\[
\eta_l, \eta_r: R \to M
\]
we equip $M$ with the bimodule structure $rms := \eta_l(r) \star m \star \eta_r(s)$ and one directly checks that then $\star$ is a map in $\bimod$.
\end{proof}

The monoidal structure on $\bimod$ is in fact symmetric monoidal, where the symmetry isomorphism
\[
(R\otimes_{\Z}R';M\otimes_{\Z}M')\cong (R'\otimes_{\Z}R;M'\otimes_{\Z}M)
\]
is defined by switching the factors componentwise. We immediately get
\begin{lemma}
The category of commutative monoids in $\bimod$ is isomorphic to the category of pairs $(R;M)$ where $R$ is a commutative ring and $M$ is a commutative  $R$-algebra in two different ways.
\end{lemma}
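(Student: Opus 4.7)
The plan is to piggy-back on the preceding Lemma~\ref{monbimod}, which already identifies monoids in $\bimod$ with triples $(R;M;\eta_l,\eta_r)$ where $R$ is commutative, $M$ is a (not necessarily commutative) ring, and $\eta_l,\eta_r : R \to M$ are central ring homomorphisms. It then suffices to characterise, under that identification, when such a monoid is commutative with respect to the symmetry isomorphism recalled just above the statement.

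First I would unpack the commutativity axiom componentwise. A commutative monoid structure on $(R;M)$ is a monoid $(\mu_R,\mu_M;\eta_R,\eta_M)$ such that $\mu_R \circ \tau_R = \mu_R$ and $\mu_M \circ \tau_M = \mu_M$, where $\tau_R,\tau_M$ are the swap maps on $R\otimes_\Z R$ and $M\otimes_\Z M$ respectively. The $R$-component reproduces the condition that $R$ is commutative, which is already part of the data coming from Lemma~\ref{monbimod}. The $M$-component translates, via the formula $m \star n := \mu_M(m \otimes n)$, into the identity $m \star n = n \star m$; that is, the ring $(M,\star)$ is commutative.

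Next I would observe that once $M$ is commutative, the centrality condition on $\eta_l$ and $\eta_r$ becomes vacuous: every ring homomorphism into a commutative ring is central. Hence the remaining data is simply a pair of ring homomorphisms $\eta_l,\eta_r : R \to M$ from the commutative ring $R$ to the commutative ring $M$, which is exactly the datum of two $R$-algebra structures on $M$. Conversely, given a commutative ring $R$ and a commutative ring $M$ with two $R$-algebra structures $\eta_l,\eta_r$, the bimodule structure $rms := \eta_l(r)\star m \star \eta_r(s)$ from Lemma~\ref{monbimod} together with $\mu_M(m\otimes n) := m \star n$ defines a monoid in $\bimod$, and commutativity of $(M,\star)$ ensures $\mu_M \circ \tau_M = \mu_M$, so the resulting monoid is commutative.

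No real obstacle is expected; the argument is a direct consequence of Lemma~\ref{monbimod} together with the symmetry of the monoidal product. The only step that requires a small check is the naturality of the correspondence on morphisms, which follows by the same bookkeeping as in the proof of Lemma~\ref{monbimod}: a morphism of commutative monoids in $\bimod$ is a pair $(\alpha,f)$ compatible with $\mu_R,\mu_M$, and this compatibility is equivalent to $\alpha$ being a ring homomorphism and $f$ being a ring homomorphism intertwining the two pairs of $R$- and $R'$-algebra structures.
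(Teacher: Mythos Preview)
Your proposal is correct and follows exactly the approach the paper intends: the paper gives no explicit proof, simply writing ``We immediately get'' after recalling the symmetry isomorphism, so it is treating the statement as a direct corollary of Lemma~\ref{monbimod} plus the componentwise commutativity condition. Your write-up spells out precisely those details.
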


Note that in general a monoid $(R;M)$ is not an algebra over $(R;R)$. For this to happen we need the two $R$-algebra structures on $M$ to agree.

\subsection{Definition of big Witt vectors with coefficients}

In this section we give the definition of big Witt vectors with coefficients $W(R;M)$ for a (not necessarily commutative) ring $R$ and an $R$-bimodule $M$, see Definition \ref{def_Witt} below. This construction will determine a functor from the category of bimodules to the category of abelian groups.

\begin{defn}\label{defn_tensoralgebra}
For a ring $R$ and a bimodule $M$, we define the completed tensor algebra
\[
\widehat{T}(R;M) = \prod_{n\geq 0} M^{\otimes_R n}.
\]
We think of elements as representing formal power series of the form
\[
a_0 + a_1 \cdot t + a_2 \cdot t^2 + a_3\cdot t^3 + \ldots,
\]
where $a_n\in  M^{\otimes_R n}$. Note that the powers of $t$ are just notation indicating the grading, there is no element $t$.
The ring structure is defined in the obvious way, and is continuous with respect to the product topology. We also define the topological subgroup of special units $\widehat{S}(R;M)$ 
to be the multiplicative subgroup of elements with constant term $a_0=1$.
\end{defn}

The topology on the special units is explicitly given by filtering by degree. More precisely, we say that a special unit is in filtration $\geq n$ if it is of the form
\[
1 + a_n t^n + a_{n+1} t^{n+1} + \ldots
\]
We denote the subgroup of filtration $\geq n$ special units by $\widehat S^{(n)}(R;M)$. Those form a neighbourhood basis of $1$.
Observe that $\widehat S^{(n)}(R;M)/ \widehat S^{(n+1)}(R;M)$ is isomorphic to $M^{\otimes_R n}$, since modulo higher filtration, multiplication of special units of filtration $\geq n$ is just addition of the tensor length $n$ part.

Also observe that in the case $M=R$ the tensor algebra $\widehat{T}(R;M)$ is the power series ring $R\pow{t}$, and the special units are just the elements of $R\pow{t}$ with constant term $1$.

\begin{defn}\label{def_Witt}
We define a ``Teichm\"uller'' map of sets $\tau: M \to \widehat{S}(R;M)$ by sending $m\mapsto 1-mt$. We then define the abelian group of big Witt vectors as
\[
W(R;M) = \frac{ \widehat{S}(R;M)^{\mathrm{ab}}}{ \tau(rm) \sim \tau(mr)} ,
\]
where the relation runs over all possible $m\in M$ and $r\in R$, and we take the abelianisation and the quotient in Hausdorff topological groups, i.e. divide by the closure of the normal subgroup generated by the relations we impose.
\end{defn}

\begin{rem}
Throughout the paper, we treat $W(R;M)$ as a complete Hausdorff topological abelian group, see Proposition \ref{prop:coeq} and the constructions in \S\ref{sec:operators}.
Alternatively one can consistently treat $W(R;M)$ as a pro-object, or even just an inverse system, of the truncated Witt vectors discussed in detail in \S\ref{sec:truncated}.
As discussed there, all the structure maps on $W(-;-)$ we discuss are compatible with truncation in the appropriate sense, and thus can be recovered in the untruncated setting from their truncated counterparts. The approach with pro-objects is the one usually adopted when dealing with the de Rham--Witt complex.
\end{rem}

\begin{rem}
\label{rem:solid} When $R=M$ is commutative, we have that $W(R;R)$ is the multiplicative subgroup of power series with constant term one, which is the usual additive abelian group of Witt vectors $W(R)$.

Suppose more generally that $R$ is commutative and that $M$ is a solid commutative $R$-algebra, i.e. that the multiplication map $\mu : M\otimes_R M\to M$ is an isomorphism. In this case the map of bimodules $(R;M)\to (M;M)$ induced by the algebra structure gives an isomorphism of abelian groups $W(R;M)\cong W(M;M)=W(M)$ with the usual Witt vectors of $M$ as follows immediately from the definitions. For example $W(\Z;\F_p)\cong W(\F_p)$.
\end{rem}

\begin{rem}
\label{rem:additive}
The (generally noncommutative) group $\widehat{S}(R;M)$ is written multiplicatively. However, we will write the group structure on the abelian groups $W(R;M)$ additively. This should not lead to confusion, since we will use the multiplicative notation precisely if we think about elements of $W(R;M)$ as representative power series in $\widehat{S}(R;M)$.
\end{rem}

\begin{lemma}
\label{lem:generators}
$\widehat{S}(R;M)$ is topologically generated by elements of the form $(1 + x_0 \otimes\cdots\otimes x_{k-1} t^k)$. More generally, given a generating set $G_k \subseteq M^{\otimes_R k}$ (as abelian groups) for every $k$, the group $\widehat{S}(R;M)$ is topologically generated by elements of the form $(1+ g_k t^k)$ with $g_k\in G_k$. 
\end{lemma}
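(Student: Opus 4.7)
The plan is to exploit the filtration $\widehat{S}(R;M) = \widehat{S}^{(1)} \supseteq \widehat{S}^{(2)} \supseteq \cdots$ together with the identifications $\widehat{S}^{(k)}/\widehat{S}^{(k+1)} \cong M^{\otimes_R k}$ recorded just before the lemma; under this isomorphism the class of $(1+mt^k)$ corresponds to $m$. Hence if $G_k$ generates $M^{\otimes_R k}$ as an abelian group, then the set $\{(1+gt^k) : g\in G_k\}$ generates $\widehat{S}^{(k)}/\widehat{S}^{(k+1)}$ as an abelian group. The first statement is the special case $G_k = \{x_0\otimes\cdots\otimes x_{k-1} : x_i\in M\}$, which generates $M^{\otimes_R k}$ by definition of the tensor product, so it suffices to prove the more general statement.

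For the general statement I would show that the (algebraic) subgroup $H \subseteq \widehat{S}(R;M)$ generated by $\bigcup_{k\geq 1}\{(1+g_k t^k) : g_k\in G_k\}$ is dense. Given any $u \in \widehat{S}(R;M)$, I would construct inductively a sequence $h_n\in H$ with $u\cdot h_n^{-1}\in \widehat{S}^{(n+1)}$. Take $h_0 = 1$, so $u\cdot h_0^{-1} = u \in \widehat{S}^{(1)}$. Assuming $h_n$ is constructed, write $u\cdot h_n^{-1} = 1 + a_{n+1} t^{n+1} + \cdots$ with $a_{n+1}\in M^{\otimes_R(n+1)}$. Expressing $a_{n+1} = \sum_i \epsilon_i g_i$ with $g_i \in G_{n+1}$ and $\epsilon_i\in\{\pm1\}$, I set
\[
h_{n+1} = \Bigl(\prod_i (1+g_i t^{n+1})^{\epsilon_i}\Bigr)\cdot h_n \in H.
\]
By the filtration identification, $h_{n+1}h_n^{-1}\equiv 1 + a_{n+1} t^{n+1}\pmod{\widehat{S}^{(n+2)}}$, and hence $u\cdot h_{n+1}^{-1}\in \widehat{S}^{(n+2)}$, completing the induction.

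Finally, since $h_{n+1}h_n^{-1}\in \widehat{S}^{(n+1)}$, the sequence $(h_n)$ is Cauchy in the $t$-adic topology, and by construction it converges to $u$. Thus $u$ lies in the closure of $H$, which shows that $H$ is dense and proves topological generation.

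There is no real obstacle here; the argument is essentially forced by the filtration and its associated graded identification. The only point requiring mild care is checking that the subgroups $\widehat{S}^{(n)}$ are preserved under multiplication and inversion so that the quotients $\widehat{S}^{(k)}/\widehat{S}^{(k+1)}$ genuinely inherit the claimed abelian group structure given by addition in $M^{\otimes_R k}$; this follows from the observation already recorded before the lemma that modulo higher filtration the group operation in $\widehat{S}$ coincides with addition of the top-degree tensor component.
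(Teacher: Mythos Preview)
Your proof is correct and follows essentially the same approach as the paper's own proof: both peel off the leading coefficient $a_n$ at each filtration step by expressing it via generators $G_n$ and multiplying by the corresponding factors $(1+g_n t^n)^{\pm 1}$, leaving a remainder of strictly higher filtration, and then iterate. Your version is slightly more formal in tracking the Cauchy sequence $(h_n)$ and its limit, whereas the paper simply states that one obtains the element as a product of such factors up to a remainder of arbitrarily high filtration.
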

\begin{proof}
Assume we have a special unit in filtration $\geq n$, i.e. one of the form
\[
1 + a_n t^n + a_{n+1} t^{n+1} + \ldots
\]
Then the coefficient $a_n$ can be written as a finite sum of elements in $G_n$, and we can split off corresponding factors of the form $(1 + g_n t^n)$. This allows us to write any such special unit as a product of ones of the form $(1 + g_n t^n)$ and a remainder term of higher filtration. Inductively, this proves that, up to a remainder term of arbitrarily high filtration, any element of $\widehat{S}(R;M)$ can be written as a product of terms of the form $(1 + g_k t^k)$. This proves the claim.
\end{proof}

\begin{lemma}
\label{lem:complete}
The filtration of $\widehat{S}(R;M)$ by the $\widehat{S}^{(n)}(R;M)$ induces a filtration $W^{(n)}(R;M)$ on the quotient $W(R;M)$. This filtration is complete and Hausdorff. 
\end{lemma}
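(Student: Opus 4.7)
The plan is to define $W^{(n)}(R;M)$ as the image $q(\widehat{S}^{(n)}(R;M))$ under the quotient map $q\colon \widehat{S}(R;M) \to W(R;M)$; this tautologically yields a descending chain of subgroups. Writing $K \trianglelefteq \widehat{S}(R;M)$ for the closed normal subgroup generated by commutators together with the relations $\tau(rm)\tau(mr)^{-1}$, one has $W(R;M) = \widehat{S}(R;M)/K$ and $W^{(n)}(R;M) = (\widehat{S}^{(n)}(R;M)\cdot K)/K$. Since $\widehat{S}^{(n)}(R;M)$ is open in $\widehat{S}(R;M)$, so is $\widehat{S}^{(n)}(R;M)\cdot K$, which means that each $W^{(n)}(R;M)$ is an open subgroup of $W(R;M)$, and that the quotient topology on $W(R;M)$ agrees with the topology induced by this filtration.

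For the Hausdorff property, I would establish the identity $\bigcap_n \widehat{S}^{(n)}(R;M)\cdot K = K$. Given $x$ in the left-hand side, write $x = s_n k_n$ with $s_n \in \widehat{S}^{(n)}(R;M)$ and $k_n \in K$ for every $n$. Since the $\widehat{S}^{(n)}(R;M)$ form a neighbourhood basis of $1$, we have $s_n \to 1$, and continuity of the group operations yields $k_n = s_n^{-1}x \to x$. Closedness of $K$ then forces $x \in K$, giving $\bigcap_n W^{(n)}(R;M) = 0$.

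For completeness I would show that the natural continuous map $W(R;M) \to \varprojlim_n W(R;M)/W^{(n)}(R;M)$ is a bijection (injectivity being exactly the Hausdorff statement). For surjectivity, given a compatible sequence $(\overline{w}_n)$, I would inductively construct lifts $s_n \in \widehat{S}(R;M)$ of $\overline{w}_n$ satisfying $s_n^{-1}s_{n+1} \in \widehat{S}^{(n)}(R;M)$ as follows: having $s_n$, pick any lift $s'_{n+1}$ of $\overline{w}_{n+1}$; compatibility forces $s_n^{-1}s'_{n+1} \in \widehat{S}^{(n)}(R;M)\cdot K$, so decomposing as $u\cdot k$ with $u\in \widehat{S}^{(n)}(R;M)$ and $k\in K$, the modified element $s_{n+1} := s'_{n+1} k^{-1}$ still lifts $\overline{w}_{n+1}$ and achieves $s_n^{-1}s_{n+1} = u\in \widehat{S}^{(n)}(R;M)$. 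The resulting sequence $(s_n)$ is Cauchy with respect to the filtration by the $\widehat{S}^{(n)}(R;M)$, so it converges in $\widehat{S}(R;M) = \varprojlim_n \widehat{S}(R;M)/\widehat{S}^{(n)}(R;M)$ to some $s \in \widehat{S}(R;M)$, whose image $q(s) \in W(R;M)$ realises $(\overline{w}_n)$.

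The main technical point is the inductive lifting step for surjectivity: one must be able to modify any chosen lift of $\overline{w}_{n+1}$ by a suitable element of $K$ (which preserves its image in $W(R;M)$) so as to make it agree with $s_n$ modulo $\widehat{S}^{(n)}(R;M)$. Everything else reduces to the fact that $\widehat{S}(R;M)$ is itself complete Hausdorff with respect to its filtration by the open subgroups $\widehat{S}^{(n)}(R;M)$, together with the closedness of $K$ by construction.
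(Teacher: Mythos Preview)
Your argument is correct. The paper, however, takes a more homological route: writing $N$ for the kernel (your $K$), it observes that closedness of $N$ means the induced filtration $N\cap \widehat{S}^{(n)}(R;M)$ on $N$ is complete and Hausdorff, equivalently $\operatorname{Rlim}_n\bigl(N\cap \widehat{S}^{(n)}(R;M)\bigr)=0$. Combined with $\operatorname{Rlim}_n \widehat{S}^{(n)}(R;M)=0$, the short exact sequence of towers
\[
0 \to N\cap \widehat{S}^{(n)}(R;M) \to \widehat{S}^{(n)}(R;M) \to W^{(n)}(R;M) \to 0
\]
yields $\operatorname{Rlim}_n W^{(n)}(R;M)=0$, which is exactly the complete-Hausdorff statement. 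Your proof is the elementary unpacking of this: your Hausdorff argument is $\lim_n W^{(n)}=0$, and your inductive lifting step is precisely the hands-on verification that $\lim^1$ vanishes for the image tower. The advantage of your version is that it avoids derived limits entirely (and makes transparent what happens in the non-abelian setting of $\widehat{S}$); the paper's version is shorter and makes clear that the only inputs are the closedness of the kernel and the completeness of $\widehat{S}$, with the rest being formal.
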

\begin{proof}
Observe that the kernel $N$ of $\widehat{S}(R;M)\to W(R;M)$ is by definition closed, so its filtration by the $N\cap \widehat{S}^{(n)}(R;M)$ is complete and Hausdorff, or equivalently, the derived limit $\operatorname{Rlim}_n N\cap \widehat{S}^{(n)}(R;M)$ vanishes. Since the original filtration is complete and Hausdorff, i.e. $\operatorname{Rlim}_n \widehat{S}^{(n)}(R;M) = 0$, we see that
\[
\operatorname{Rlim}_n \widehat{S}^{(n)}(R;M)/(N\cap \widehat{S}^{(n)}(R;M)) = 0,
\]
i.e. that the image filtration is complete and Hausdorff.
\end{proof}

\begin{prop}
\label{prop:coeq}
  As a functor from $\bimod$ to the category of \emph{Hausdorff} topological groups, $\widehat{S}(-;-)$ and $W(-;-)$ commute with reflexive coequalizers.
\end{prop}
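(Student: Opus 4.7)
The strategy is to reduce the claim to the analogous statement for the tensor power functors $(R;M) \mapsto M^{\otimes_R n}$, exploiting the complete filtration on $\widehat{S}(R;M)$ whose associated graded pieces are the $M^{\otimes_R n}$. First I would prove that for each $n \geq 0$ the functor $\bimod \to \Ab$ sending $(R;M)$ to $M^{\otimes_R n}$ preserves reflexive coequalizers. By Remark \ref{rem:freeres}, reflexive coequalizers in $\bimod$ are sifted colimits computed on the underlying pairs of abelian groups. The iterated tensor product $M \otimes_R \cdots \otimes_R M$ is itself a (reflexive) coequalizer of iterated tensor products over $\Z$ of copies of $R$ and $M$, and $\otimes_\Z$ on abelian groups preserves sifted colimits in each variable; since sifted colimits commute with sifted colimits, the functor $(R;M) \mapsto M^{\otimes_R n}$ preserves reflexive coequalizers.

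From this, each truncation $\widehat{S}(-;-)/\widehat{S}^{(n)}(-;-)$ preserves reflexive coequalizers as a functor to discrete (hence Hausdorff) topological abelian groups: as a set it is canonically identified with $\prod_{k=1}^{n-1} M^{\otimes_R k}$ via the coefficient map, finite products of set-valued functors preserve sifted colimits, and the forgetful functor $\Ab \to \Set$ creates sifted colimits, so the group-valued truncation inherits the preservation property.

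To lift this to the untruncated $\widehat{S}$, I would verify directly that, for a reflexive coequalizer $(R_1;M_1) \rightrightarrows (R_0;M_0) \to (R;M)$ in $\bimod$, the induced map $p : \widehat{S}(R_0;M_0) \to \widehat{S}(R;M)$ is a topological quotient by the closure of the algebraic coequalizer relation. Surjectivity follows by term-wise lifting of coefficients, using that the tensor powers preserve surjective coequalizers in $\Ab$, and $p$ is open because it sends the basic open subgroup $\widehat{S}^{(n)}(R_0;M_0)$ onto $\widehat{S}^{(n)}(R;M)$ for each $n$. Letting $N \subseteq \widehat{S}(R_0;M_0)$ be the normal subgroup generated by the differences $\phi_0(\xi)\phi_1(\xi)^{-1}$ coming from the parallel pair, the already-established truncated statement gives $\ker(p) \cdot \widehat{S}^{(n)}(R_0;M_0) = N \cdot \widehat{S}^{(n)}(R_0;M_0)$ for every $n$, so taking intersections and using completeness of the filtration (Lemma \ref{lem:complete}) yields $\ker(p) = \overline{N}$. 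Hence $\widehat{S}(R;M) = \widehat{S}(R_0;M_0)/\overline{N}$ is exactly the reflexive coequalizer in Hausdorff topological groups.

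The statement for $W$ then follows because $W(R;M)$ is obtained from $\widehat{S}(R;M)$ by quotienting by the closure of a natural normal subfunctor, namely the one generated by commutators and by Teichm\"uller relations $\tau(rm)\tau(mr)^{-1}$; this quotient is itself a (reflexive) coequalizer in Hausdorff topological groups, and reflexive coequalizers commute with reflexive coequalizers. The main technical obstacle is the identification $\ker(p) = \overline{N}$: the inclusion $N \subseteq \ker(p)$ is automatic, but the reverse depends on the delicate level-wise matching controlled by the truncated statements and on the completeness of the filtration, which is what forces us to work systematically with the topological (and not merely algebraic) structure throughout.
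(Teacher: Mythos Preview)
Your proposal is correct and follows essentially the same strategy as the paper. Both arguments reduce to the fact that the tensor power functors $(R;M)\mapsto M^{\otimes_R n}$ preserve reflexive coequalizers, and both identify $\ker p$ with $\overline{N}$ by a filtration argument; you phrase this via the truncated quotients and the identity $\ker(p)\cdot\widehat{S}^{(n)} = N\cdot\widehat{S}^{(n)}$ together with $\overline{N}=\bigcap_n N\cdot\widehat{S}^{(n)}$, whereas the paper carries out the same induction explicitly by peeling off one filtration layer at a time and writing each kernel element as a convergent product of elements $f(1+b_nt^n)g(1+b_nt^n)^{-1}$. For the passage to $W$, your abstract formulation (``reflexive coequalizers commute with reflexive coequalizers'') unpacks to exactly the concrete check the paper performs: that the closure of the image of the relation subgroup $N(R_0;M_0)$ in $\widehat{S}(R;M)$ equals $N(R;M)$, which holds because the generators (commutators and Teichm\"uller relations) are hit by surjectivity. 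One minor point: your citation of Lemma~\ref{lem:complete} is slightly off, since that lemma concerns the induced filtration on $W$, whereas what you actually need is the (immediate) completeness of the filtration on $\widehat{S}(R_0;M_0)$ itself.
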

\begin{proof}
  We first check that $\widehat{S}(-;-)$ commutes with reflexive coequalizers.
  To see this, we need to check that if 
  \[
    \begin{tikzcd}
      (R_1,M_1) \rar[shift left,"f"]\rar[shift right,"g"'] & (R_0,M_0) \lar\rar & (R,M)
    \end{tikzcd}
  \]
  is a reflexive coequalizer of bimodules,
  then $\widehat{S}(R;M)$ is obtained from $\widehat{S}(R_0;M_0)$ by quotienting by the closed normal subgroup $N$ generated by all $f(y)g(y)^{-1}$ for $y\in \widehat{S}(R_1;M_1)$.
  Surjectivity is clear, so we have to check that the kernel of $\widehat{S}(R_0;M_0)\to\widehat{S}(R;M)$ agrees with $N$.
  The subgroup $N$ is clearly contained in the kernel. Given an element $x$ in the kernel, it is of the form $(1+a_n t^n + \ldots)$, with $a_n$ in the kernel of the right map in the diagram
  \[
     \begin{tikzcd}
       M_1^{\otimes_{R_1} n} \rar[shift left,"f"]\rar[shift right,"g"'] &  M_0^{\otimes_{R_0} n} \lar\rar & M^{\otimes_{R} n} 
    \end{tikzcd}
  \]
  Since reflexive coequalizers of abelian groups commute with tensor products, this diagram is also a reflexive coequalizer of abelian groups, so $a_n$ is of the form $f(b_n)-g(b_n)$. Thus the original $x$ can up to a term of higher filtration (which is also in the kernel) be written as $x = f(1+b_n t^n) g(1+b_nt^n)^{-1}$. Inductively, we can write any element in the kernel as a convergent product of elements of the form $f(y)g(y)^{-1}$, so the kernel is contained in $N$ as desired.
  We now want to show that $W(-;-)$ also commutes with reflexive coequalizers.
  To that end, let $N(R;M)$ denote the closed normal subgroup of $\widehat{S}(R;M)$ generated by commutators and elements of the form $(1+rmt)(1+mrt)^{-1}$, so that $W(R;M) = \widehat{S}(R;M)/N(R;M)$.
  Since $\widehat{S}(-)$ commutes with reflexive coequalizers, we see that the coequalizer of $W(R_1;M_1)$ and $W(R_0;M_0)$ can be described as the quotient of $S(R;M)$ by the closure of the image of $N(R_0;M_0)$. So we have to check that this closure agrees with $N(R;M)$. But this is clear: $N(R;M)$ is topologically generated by commutators and elements of the form $(1+rmt)(1+mrt)^{-1}$, all of which are in the image.
\end{proof}

We want to define a version with coefficients of the ghost map of the usual Witt vectors. We start by defining a map $\log: \widehat{S}(R;M)\to \Q\widehat{\otimes}\,\widehat{T}(R;M)$, where $\widehat{\otimes}$ denotes the completed tensor product $\Q\widehat{\otimes} \prod_{n\geq 0} M^{\otimes_R n} = \prod_{n\geq 0} \Q\otimes M^{\otimes_R n}$, by
\[
\log(1 + f) = f - \frac{1}{2} f^2 + \frac{1}{3} f^3 - \ldots
\]
We will also use $\log$ to refer to the map $\widehat{S}(R;M)\to \Q\widehat{\otimes} \prod_{n\geq 1} M^{\circledcirc_R n}$ obtained by postcomposing with the quotient map $\widehat{T}(R;M) \to \prod_{n\geq 1} M^{\circledcirc_R n}$ to the cyclic tensor product of \S\ref{secprelim}.

A basic observation from algebra is that the derivative $\frac{d}{dt} \log (1+f(t))$ over a commutative ring has integral coefficients, because it agrees with $f'\cdot (1+ f)^{-1}$. The key property of derivation is that the coefficient in front of $x^n$ is multiplied by $n$.
In our setting with coefficients, it turns out that the correct analogue of multiplication with $n$ is the transfer (i.e. additive norm) with respect to the $C_n$ action on the abelian group $M^{\circledcirc_R n}$.

Define $\tr: \prod_{n\geq 1} (M^{\circledcirc_R n})_{C_n} \to \prod_{n\geq 1} (M^{\circledcirc_R n})^{C_n}$ to be the product of the transfers of the $C_n$ action on $M^{\circledcirc_R n}$. We define a map 
\[
\tlog=-\tr\circ \log: \widehat{S}(R;M)\longrightarrow \Q\widehat{\otimes} \prod_{n\geq 1} (M^{\circledcirc_R n})^{C_n}.
\]
Note that for $R$ a commutative ring and $M=R$, $\tlog$ agrees with $-t\cdot \operatorname{dlog}$, the operator that sends a power series $1+f(t)$ to $-t$ times the derivative of $\log(1+f(t))$.\footnote{Note that the minus sign in front of $\tr\circ \log$ is a convention. There are in fact four different possible conventions that one can adopt here, which lead to slightly different formulas in what follows. Also see Remark 1.15 in \cite{MR3316757} for a  discussion.}

\begin{prop}\label{prop:tlogadd}\label{prop:tlogspecialvalues}
The map $\tlog:\widehat{S}(R;M)\to \Q\widehat{\otimes} \prod_{n\geq 1} (M^{\circledcirc_R n})^{C_n}$ satisfies the following properties:
\begin{enumerate}
\item It is a homomorphism with respect to the group structures given by multiplication in the domain, and addition in the codomain,
\item It sends $1 - a_n t^n$ to the element
\[
\tlog(1 - a_n t^n)=\tr^{C_n}_e a_n t^n + \tr^{C_{2n}}_{C_2} a_n^2 t^{2n} + \tr^{C_{3n}}_{C_3}a_n^3 t^{3n} + \ldots,
\]
and in particular for $n=1$ we get that $\tlog(1 - a_1 t)=  a_1 t + a_1^2 t^{2} + a_1^3 t^{3} + \ldots$.
\item It satisfies
\[
\tlog(1 - fg) = \tlog(1-gf)
\]
for any elements $f,g\in \widehat{T}(R;M)$, at least one of which has trivial constant term.
\item It lifts uniquely along the rationalisation map to a natural homomorphism
\[
\tlog: \quad \widehat{S}(R;M) \longrightarrow  \prod_{n\geq 1} (M^{\circledcirc_R n})^{C_n},
\]
which still has the above properties. Here naturality is with respect to the category of bimodules $(R;M)$.
\end{enumerate}
\end{prop}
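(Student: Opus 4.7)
The plan is to establish parts (1)--(3) working with the rational target $\Q\widehat{\otimes}\prod_{n\geq 1}(M^{\circledcirc_R n})^{C_n}$, and then use the explicit formula from (2) together with topological generation and continuity to produce the integral lift in (4). Throughout, the essential observation is that $\log$ factors through the coinvariants $\prod_n (M^{\circledcirc_R n})_{C_n}$, where commutators of homogeneous elements vanish by cyclic rotation.

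For part (2), I would expand $\log(1 - a_n t^n) = -\sum_{k\geq 1}\frac{1}{k}\, a_n^k\, t^{kn}$ directly. Since $a_n^k \in M^{\otimes_R kn}$ is fixed by the subgroup $C_k \subset C_{kn}$ of rotations by multiples of $n$, the factorisation $\tr^{C_{kn}}_e = \tr^{C_{kn}}_{C_k}\circ \tr^{C_k}_e$ together with the fact that $\tr^{C_k}_e$ acts as multiplication by $k$ on $C_k$-fixed elements makes the $1/k$ cancel, producing the claimed formula. In particular each coefficient lies in the integer lattice $(M^{\circledcirc_R kn})^{C_{kn}}$.

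For part (3), the key is that for $f, g \in \widehat T(R;M)$ of pure tensor degrees, $(fg)^k$ and $(gf)^k$ are related by a cyclic shift in $M^{\otimes_R k(\deg f + \deg g)}$ and hence become equal in the coinvariants $(M^{\circledcirc k(\deg f + \deg g)})_{C_{k(\deg f + \deg g)}}$; summing over the homogeneous decomposition of $f$ and $g$ and over $k$ yields $\log(1 - fg) = \log(1 - gf)$ in $\Q\widehat{\otimes} \prod_n (M^{\circledcirc_R n})_{C_n}$, hence in the image of $\tr$. For part (1), the same cyclic-shift argument shows that a commutator $[a,b] = ab - ba$ of homogeneous $a, b \in \widehat{T}(R;M)$ vanishes in $(M^{\circledcirc n})_{C_n}$, and then so does any iterated commutator. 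Writing $\log((1+f)(1+g)) - \log(1+f) - \log(1+g)$ via the Baker--Campbell--Hausdorff formula as a Lie series of iterated commutators in $\log(1+f)$ and $\log(1+g)$, this BCH correction thus vanishes in the coinvariant target. Additivity of $\tr$ then yields additivity of $\tlog$.

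For part (4), existence of the integral lift follows from Lemma \ref{lem:generators}: $\widehat{S}(R;M)$ is topologically generated by elements $1 + m t^n$, and formula (2) shows $\tlog$ sends these to the integer lattice $\prod_n (M^{\circledcirc_R n})^{C_n}$; the additivity from (1) plus completeness of the filtration (Lemma \ref{lem:complete}) extends this to all of $\widehat{S}(R;M)$. For uniqueness as a \emph{natural} transformation, pick a free resolution $(R_1; M_1) \rightrightarrows (R_0; M_0) \twoheadrightarrow (R; M)$ as in Remark \ref{rem:freeres}. By Lemma \ref{restorfree} the invariants $(M_0^{\circledcirc_{R_0} n})^{C_n}$ are torsion-free, so any two integral lifts of the same rational map agree on $\widehat{S}(R_0; M_0)$. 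Proposition \ref{prop:coeq} then presents $\widehat{S}(R; M)$ as a reflexive coequalizer of $\widehat{S}(R_0; M_0)$, forcing the lifts to agree on $\widehat{S}(R; M)$ as well. Naturality of $\tlog$ itself is immediate from the functoriality of all constructions involved. I expect (1) to be the main obstacle: one must verify that the BCH correction genuinely expands as a convergent Lie series in the completed, topologised tensor algebra, and that each such iterated commutator in turn resolves into a sum of pure-degree commutators $ab - ba$ whose vanishing in $(M^{\circledcirc n})_{C_n}$ is then transparent.
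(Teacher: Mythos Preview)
Your proof is correct. The main divergence from the paper is in part~(1). The paper introduces the degree operator $\partial$ (multiplication by $n$ on $M^{\otimes_R n}$), verifies the Leibniz rule, and from this derives $\partial \log u \sim (\partial u)\,u^{-1}$ in the coinvariant target; a short computation then gives $\partial \log(uv) \sim \partial \log u + \partial \log v$, and since $\partial$ is a rational isomorphism on the target, additivity follows. Your Baker--Campbell--Hausdorff route is equally valid: convergence holds because $\log(1+f)$ and $\log(1+g)$ have positive filtration and each iterated bracket raises filtration, and every bracket $[u,v] = uv - vu$ vanishes in $\prod_n (M^{\circledcirc_R n})_{C_n}$ since $uv \sim vu$ for arbitrary (not just homogeneous) $u,v$. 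The paper's argument is slightly more elementary and avoids invoking BCH; yours makes the Lie-theoretic source of additivity transparent. Note, incidentally, that you do not need to ``resolve iterated commutators into pure-degree commutators'': any iterated bracket is already of the form $[u,v]$ for some $u,v$, hence vanishes directly.

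One small logical point in part~(4): your integrality argument (on generators via~(2), extended by~(1) and continuity) shows the \emph{image} of the rational $\tlog$ lies in the image of the integer lattice, but when $(M^{\circledcirc_R n})^{C_n}$ has torsion this does not by itself single out a \emph{canonical} preimage. Existence of a \emph{natural} integral lift for general $(R;M)$ therefore also relies on the resolution step, not only uniqueness: on free $(R_i;M_i)$ the lift is uniquely determined by torsion-freeness, and the coequalizer presentation of $\widehat{S}(R;M)$ from Proposition~\ref{prop:coeq} then produces the induced map on $\widehat{S}(R;M)$. Also, your reference to Lemma~\ref{lem:complete} (which concerns $W(R;M)$) should simply be to the evident completeness of $\widehat{S}(R;M)$ as a product.
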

\begin{proof}
For the first claim it suffices to show that $\log: \widehat{S}(R;M) \to \Q\widehat{\otimes} \prod_{n\geq 1} (M^{\circledcirc_R n})_{C_n}$ is a homomorphism.
We define an operator $\partial: \widehat{T}(R;M)\to \widehat{T}(R;M)$ that acts by multiplication with $n$ on the factor $M^{\otimes_R n}$. This satisfies $\partial(fg) = \partial f\cdot g + f\cdot \partial g$. In particular, we have
\[
\partial f^n = (\partial f) f^{n-1} + f (\partial f) f^{n-2} + \ldots + f^{n-1} (\partial f).
\]
Now let us write $f \sim g$ when elements $f, g\in \Q\widehat{\otimes}\, \widehat{T}(R;M)$ have the same image under the canonical map to $\Q\widehat{\otimes} \prod_{n\geq 1} (M^{\circledcirc_R n})_{C_n}$. One easily sees by expanding that $fg \sim gf$ for any elements $f,g$. It follows that $\partial f^n \sim n (\partial f) f^{n-1}$, and for a special unit $u=1+f$:
\begin{align*}
\partial \log u &\sim (\partial f) - (\partial f)f + (\partial f)f^2 - \ldots
                 = (\partial f) \cdot (1 + f)^{-1}
                 = (\partial u) \cdot u^{-1}.
\end{align*}
Therefore, for any special units $u,v$, we see that
\begin{align*}
\partial \log(uv) &\sim \partial(uv) \cdot (uv)^{-1}
 = ((\partial u) \cdot v + u\cdot (\partial v)) v^{-1} u^{-1}
 = (\partial u) \cdot u^{-1} + u \cdot (\partial v) \cdot v^{-1} \cdot u^{-1}\\
 &= \partial \log u + u \cdot (\partial \log v) \cdot u^{-1}\sim \partial \log u +  \partial \log v.
\end{align*}
This shows that in the diagram
\[
\begin{tikzcd}
\widehat{S}(R;M)\rar{\log} & \Q\widehat{\otimes}\widehat{T}(R;M) \rar\dar{\partial} & \Q\widehat{\otimes}\prod_{n\geq 1} (M^{\circledcirc_R n})_{C_n}\dar{\partial}\\
  & \Q\widehat{\otimes}\widehat{T}(R;M) \rar & \Q\widehat{\otimes}\prod_{n\geq 1}(M^{\circledcirc_R n})_{C_n}
\end{tikzcd}
\]
the lower composite from the left most node to the lower right node is a homomorphism. But since the rightmost vertical map is an isomorphism, the top horizontal composite is a homomorphism as well.

For the second claim we calculate explicitly
\begin{align*}
\tlog(1-a_nt^n) &= \tr_{e}^{C_{n}} a_n t^n + \tr_{e}^{C_{2n}} \frac{a_n^2}{2} t^{2n}  + \tr^{C_{3n}}_{e}\frac{a_n^3}{3} t^{3n} + \ldots\\
&= \tr_{e}^{C_{n}} a_n t^n + \tr_{C_2}^{C_{2n}} \tr_{e}^{C_{2}} \frac{a_n^2}{2} t^{2n} + \tr^{C_{3n}}_{C_3} \tr^{C_{3}}_{e}\frac{a_n^3}{3} t^{3n} + \ldots\\
&= \tr^{C_n}_e a_n t^n + \tr^{C_{2n}}_{C_2} a_n^2 t^{2n} + \tr^{C_{3n}}_{C_3}a_n^3 t^{3n} + \ldots
\end{align*}
where the last equality comes from the fact that $a_n^k$ is already invariant under the action of the subgroup $C_k \subseteq C_{nk}$, so $\tr_{e}^{C_k}$ just acts by multiplication with $k$.

For the third claim, it suffices again to check this for the map $\log$. We have
\[
\log(1-fg) = -fg - \frac{fgfg}{2} - \ldots \sim -gf - \frac{gfgf}{2} - \ldots = \log(1-gf),
\]
and thus they agree in $\Q\widehat{\otimes}\prod_{n\geq 1} (M^{\circledcirc_R n})_{C_n}$.

For the last claim, we first observe that the image of $\tlog$ is integral, i.e. contained in the image of the rationalisation $\prod_{n\geq 1} (M^{\circledcirc_R n})^{C_n}\to \Q\widehat{\otimes}\prod_{n\geq 1} (M^{\circledcirc_R n})^{C_n}$. Since $\widehat{S}(R;M)$ is topologically generated by elements of the form $(1+a_nt^n)$, this follows immediately from the first two claims.
For a pair $(R;M)$ where $(M^{\circledcirc_R n})^{C_n}$ is torsion free, the rationalisation is injective.
So on the full subcategory of those $(R;M)$ with torsion free $(M^{\circledcirc_R n})^{C_n}$, $\tlog$ factors to a unique natural transformation as desired. As we are mapping to a Hausdorff topological group, $\widehat{S}(R;M)$ commutes with reflexive coequalizers in Hausdorff topological groups, and we can resolve every bimodule $(R;M)$ as a reflexive coequalizer of $(R_1;M_1)$ and $(R_0;M_0)$ with torsion-free $({M_i}^{\circledcirc_{R_i} n})^{C_n}$ (see Lemma \ref{restorfree}), this natural transformation extends uniquely to all $(R;M)$.
\end{proof}

We now want to show that $\tlog$ descends to the Witt vectors $W(R;M)$.

\begin{lemma}\label{lem:tlogkernel}
Suppose $(R;M)$ is a bimodule with the property that the transfer maps $\tr: (M^{\circledcirc_R n})_{C_n} \to (M^{\circledcirc_R n})^{C_n}$ are injective for all $n$. Suppose further that $G\subseteq \widehat{S}(R;M)$ is a subgroup with the following properties:
\begin{enumerate}
\item $G$ is closed.
\item $G$ is contained in the kernel of $\tlog:\widehat{S}(R;M) \to \prod_{n\geq 1} M^{\circledcirc_R n}$.
\item For each $n$, each $i,j\geq 0$ with $i+j=n$, and each $x_i \in M^{\otimes_R i}$, $y_j\in M^{\otimes_R j}$, $G$ contains an element of the form $(1 - (x_i \otimes y_j - y_j\otimes x_i) t^{i+j} + \ldots)$.
\end{enumerate}
Then $G$ agrees with the kernel of $\tlog: \widehat{S}(R;M) \to \Q\widehat{\otimes}\prod_{n\geq 1} M^{\circledcirc_R n}$.
\end{lemma}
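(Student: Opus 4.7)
The plan is to prove the nontrivial inclusion $\ker(\tlog)\subseteq G$ by successive approximation in the filtration on $\widehat{S}(R;M)$. Given $u\in\ker(\tlog)$, I will build a sequence $v_n, v_{n+1}, v_{n+2}, \ldots \in G$ such that $u\cdot (v_n v_{n+1}\cdots v_N)^{-1}\in \widehat{S}^{(N+1)}(R;M)$ for every $N\geq n$. Completeness of the filtration (cf.~the argument of Lemma \ref{lem:complete}, applied to $\widehat{S}$ itself) then makes the infinite product $v_n v_{n+1}\cdots$ converge to $u$, and since $G$ is closed by (1), this forces $u\in G$.

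The key inductive step is to produce, from $u = 1 + a_n t^n + (\text{filtration}\geq n+1)$ with $\tlog(u)=0$, an element $v\in G$ with $uv^{-1}\in \widehat{S}^{(n+1)}(R;M)$. By Proposition \ref{prop:tlogspecialvalues}(2), together with the expansion $\log(1+F)=F-F^2/2+\ldots$ (only the linear term of which contributes in degree $n$ when $F$ has trivial constant term), the degree-$n$ component of $\tlog(u)$ equals $-\tr^{C_n}_{e}[a_n]$ in $(M^{\circledcirc_R n})^{C_n}$, where $[a_n]$ is the image of $a_n$ in $M^{\circledcirc_R n}$. The hypothesis $\tlog(u)=0$ therefore forces $\tr^{C_n}_{e}[a_n]=0$; since this map factors as the projection $M^{\circledcirc_R n}\twoheadrightarrow(M^{\circledcirc_R n})_{C_n}$ followed by the norm $\tr$, the injectivity assumption on $\tr$ yields $[a_n]=0$ in $(M^{\circledcirc_R n})_{C_n}$.

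The last ingredient is to observe that the kernel of $M^{\otimes_R n}\to(M^{\circledcirc_R n})_{C_n}$ is generated, as an abelian group, by the elements $x_i\otimes y_j - y_j\otimes x_i$ with $i+j=n$ and $i,j\geq 0$: the extreme cases $j=0$ and $i=0$ produce the $R$-commutators cutting out $M^{\circledcirc_R n}$, while the case $j=1$ (or indeed any $1\leq j\leq n-1$) furnishes lifts of the generators $\sigma z - z$ of the $C_n$-coinvariant relation on $M^{\circledcirc_R n}$. Writing $a_n=\sum_k(x_{i_k}\otimes y_{j_k} - y_{j_k}\otimes x_{i_k})$ as such a finite sum, condition (3) supplies elements $g_k\in G$ of the form $1 - (x_{i_k}\otimes y_{j_k} - y_{j_k}\otimes x_{i_k})t^n + (\text{filtration}\geq n+1)$, and the product $v:=\prod_k g_k^{-1}\in G$ satisfies $v\equiv 1 + a_n t^n \pmod{\widehat{S}^{(n+1)}}$. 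Consequently $uv^{-1}\in \widehat{S}^{(n+1)}(R;M)$, and $uv^{-1}\in\ker(\tlog)$ because $G\subseteq\ker(\tlog)$, allowing the induction to continue. The main substantive point is the identification of the leading term of $\tlog$ with the transfer applied to the leading coefficient of $u$; once this is in place, the rest is routine bookkeeping about generators and convergence.
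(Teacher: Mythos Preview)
Your proof is correct and follows essentially the same approach as the paper's: both argue by successive approximation in the filtration, using that the degree-$n$ part of $\tlog(1+a_nt^n+\ldots)$ is $-\tr_e^{C_n}a_n$, the injectivity of the transfer to force $a_n$ into the kernel of $M^{\otimes_R n}\to (M^{\circledcirc_R n})_{C_n}$, the identification of that kernel with sums of elements $x_i\otimes y_j - y_j\otimes x_i$, and condition (3) to peel off a factor in $G$, then iterating and invoking closedness. Your write-up is slightly more explicit about why those differences generate the kernel and about the convergence bookkeeping, but the argument is the same.
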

\begin{proof}
We have to show that every element in the kernel of $\tlog$ can be written as a convergent product of elements in $G$. Suppose we have an element of the form $f_n=(1+a_nt^n + \ldots)$ in the kernel of $\tlog$, with $a_n\in M^{\otimes_R n}$. Then, since 
\[
\tlog(1 + a_n t^n + \ldots) = - \tr_e^{C_n} a_n t^n + \ldots,
\]
we have that $a_n$ is in the kernel of the composite $M^{\otimes_R n} \to (M^{\circledcirc_R n})_{C_n} \to (M^{\circledcirc_R n})^{C_n}$. Since we assumed the latter map to be injective, $a_n$ is in the kernel of the quotient map $M^{\otimes_R n}\to (M^{\circledcirc_R n})_{C_n}$. This kernel is generated by differences of the form $x_i \otimes y_j - y_j \otimes x_i$ for $i+j=n$, with $x_i \in M^{\otimes_R i}$ and $y_j\in M^{\otimes_R j}$, so $a_n$ can be written as a sum of such elements. Now by (3), this implies that we can write $f_n$ as a product of elements in $G$ of filtration $\geq n$, and a remainder term of filtration $\geq n+1$, which by (2) is also in the kernel of $\tlog$. Iterating this argument, (1) implies that every element in the kernel of $\tlog$ is in $G$.
\end{proof}

\begin{lemma}
\label{lem:commutator}
We have the following description for the leading term of a commutator:
\[
[(1+a_n t^n+\ldots), (1+b_m t^m+\ldots)] = 1 + (a_nb_m - b_ma_n)\cdot t^{n+m} + \ldots
\]
\end{lemma}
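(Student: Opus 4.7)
The plan is to reduce the commutator identity to an identity for a product and a difference, and then to read off the leading term. Write $u = 1 + f$ and $v = 1 + g$, where $f \in \widehat{T}(R;M)$ has filtration $\geq n$ with leading coefficient $a_n$ in degree $n$, and similarly $g$ has filtration $\geq m$ with leading coefficient $b_m$ in degree $m$. From the identity $[u,v]\cdot vu = uv$, valid in any group, we obtain
\[
[u,v] - 1 = (uv - vu)\cdot (vu)^{-1}
\]
in $\widehat{T}(R;M)$.

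First I would compute the leading term of $uv - vu$. Expanding $uv = 1 + f + g + fg$ and $vu = 1 + g + f + gf$, the linear parts cancel and we get
\[
uv - vu = fg - gf.
\]
Now $fg$ lies in filtration $\geq n+m$ with leading coefficient $a_n b_m t^{n+m}$, and $gf$ lies in filtration $\geq n+m$ with leading coefficient $b_m a_n t^{n+m}$, so $fg - gf$ has the form $(a_n b_m - b_m a_n) t^{n+m} + \text{(filtration} \geq n+m+1)$.

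Next I would use that $(vu)^{-1}$ is a special unit, so it has constant term $1$ and filtration $\geq 0$. Since multiplying an element of filtration $\geq n+m$ by a special unit of the form $1 + (\text{filtration} \geq 1)$ only changes it by a term of strictly higher filtration, we conclude that
\[
[u,v] - 1 = (fg - gf)\cdot (vu)^{-1} = (a_n b_m - b_m a_n) t^{n+m} + (\text{filtration} \geq n+m+1),
\]
which is the claimed identity. There is no substantial obstacle here; the only thing to keep track of is that everything takes place in the completed tensor algebra, so the higher-order corrections genuinely live in strictly higher filtration, which is automatic from the bigrading by tensor degree.
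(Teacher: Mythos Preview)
Your proof is correct and in fact cleaner than the paper's. The paper first treats the ``homogeneous'' case $[(1+a_n t^n),(1+b_m t^m)]$ by factoring $(1+a_n t^n)(1+b_m t^m)$ as $(1+a_n b_m t^{n+m}+\ldots)(1+a_n t^n+b_m t^m)$ and similarly for the other order, and then bootstraps to the general case by writing $u=(1+a_n t^n)\cdot x'$, $v=(1+b_m t^m)\cdot y'$ with $x',y'$ of higher filtration and arguing that commutators with these higher-filtration factors only contribute in filtration $>n+m$. Your route via the identity $[u,v]-1=(uv-vu)(vu)^{-1}$ handles the general case in one stroke: the cancellation $uv-vu=fg-gf$ is exact, and multiplying by the special unit $(vu)^{-1}$ visibly does not disturb the leading term. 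The paper's approach has the minor byproduct of recording that elements of filtration $\geq k$ and $\geq l$ commute up to filtration $\geq k+l$, but this is an immediate corollary of the statement itself, so nothing is lost by your shortcut.
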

\begin{proof}
We first compute the leading term for a commutator of $(1+a_n t^n)$ and $(1+b_m t^m)$. We have
\begin{align*}
(1+a_n t^n)(1+b_m t^m) &= (1 + a_n t^n + b_m t^m + a_n b_m t^{n+m})\\
 &= (1 + a_n b_m t^{n+m} + \ldots) (1 + a_n t^n + b_m t^m).
\end{align*}
Multiplying this with the inverse of $(1+b_m t^m)(1+a_n t^n)$, we obtain
\begin{align*}
[(1+a_n t^n), (1+b_m t^m)] &= (1 + a_n b_m t^{n+m} + \ldots) \cdot (1 +  b_ma_n t^{n+m} + \ldots)^{-1}\\
 &= 1 + (a_nb_m - b_ma_n)\cdot t^{n+m}+\ldots.
\end{align*}
In particular, this shows that elements $(1+a_k t^k)$ and $(1+b_l t^l)$ commute up to terms of filtration $\geq k+l$. By continuity, we also get that arbitrary elements of filtration $\geq k$ and $\geq l$ commute up to terms of filtration $\geq k+l$. So if we have
\begin{gather*}
x = (1 + a_n t^n + \ldots) = (1+ a_n t^n) \cdot x',\\
y = (1+ b_m t^m + \ldots) = (1+b_m t^m) \cdot y',
\end{gather*}
with $x'$ of filtration $>n$, and $y'$ of filtration $>m$, we see that, up to terms of filtration $>n+m$, $x'$ commutes with $(1+b_mt^m)$, $y'$ commutes with $(1+a_nt^n)$, and $x'$ commutes with $y'$. We thus get that $[x,y]$ and $[(1+a_n t^n), (1+b_mt^m)]$ agree up to order $n+m$, from which the result follows.
\end{proof}

This may suggest that the associated graded of the filtration $W^{(n)}(R;M)$ is given by $(M^{\circledcirc_R n})_{C_n}$ in degree $n$. However, it can be smaller than that. An example with $R=M$ can be found in \cite{HesselholtncWcorr}.

\begin{prop}
\label{prop:ghost} The map $\tlog$ descends to a continuous group homomorphism 
\[\tlog : W(R;M) \longrightarrow \prod_{n\geq 1} (M^{\circledcirc_R n})^{C_n},\] which we call the ghost map. If all the transfer maps $(M^{\circledcirc_R n})_{C_n}\to (M^{\circledcirc_R n})^{C_n}$ are injective (for example if the $(M^{\circledcirc_R n})_{C_n}$ are torsion free), this map is injective, and in fact a homeomorphism onto its image.
\end{prop}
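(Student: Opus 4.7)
The plan is to show that $\tlog: \widehat{S}(R;M) \to \prod_{n\geq 1} (M^{\circledcirc_R n})^{C_n}$ kills every defining relation of $W(R;M)$, and then, under the transfer-injectivity hypothesis, to identify its kernel via Lemma \ref{lem:tlogkernel}. The target is abelian, so commutators are killed automatically, and the only nontrivial check is that $\tlog(1-rmt)=\tlog(1-mrt)$ for $r\in R$, $m\in M$. For this, I would apply property (3) of Proposition \ref{prop:tlogspecialvalues} to the elements $f=r$ and $g=mt$ of $\widehat{T}(R;M)$: one computes $fg = (rm)t$ and $gf = (mr)t$, and since $g$ has trivial constant term the property gives the desired equality. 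Because the codomain is Hausdorff, the closure of the subgroup generated by these relations also lies in the kernel of $\tlog$, so $\tlog$ descends to a continuous homomorphism from $W(R;M)$.

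For the injectivity claim, assume each transfer $(M^{\circledcirc_R n})_{C_n}\to (M^{\circledcirc_R n})^{C_n}$ is injective, and apply Lemma \ref{lem:tlogkernel} to the closed subgroup $G := \ker(\widehat{S}(R;M)\to W(R;M))$. Condition (1) is built into the construction of $W(R;M)$, and condition (2) is the content of the previous paragraph. For condition (3), Lemma \ref{lem:commutator} shows that the commutator $[(1+x_it^i),(1+y_jt^j)]$ lies in $G$ (being trivial after abelianisation) and has leading term $(x_i\otimes y_j - y_j\otimes x_i)t^{i+j}$; swapping the roles of $x_i$ and $y_j$ provides the opposite sign, so both sign choices required by the lemma are realised. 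The lemma then gives $G = \ker \tlog$, and hence the induced map $W(R;M)\to \prod (M^{\circledcirc_R n})^{C_n}$ is injective.

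To promote this injection to a homeomorphism onto the image, I would show that $W^{(n)}(R;M)$ is precisely the preimage of the subgroup $\prod_{k\geq n}(M^{\circledcirc_R k})^{C_k}$ under $\tlog$. The inclusion $\subseteq$ follows directly from the explicit formula in Proposition \ref{prop:tlogspecialvalues}(2). For the reverse inclusion, I would rerun the inductive argument from Lemma \ref{lem:tlogkernel}: lift $x$ to $\tilde{x}\in \widehat{S}(R;M)$ with leading term $a_kt^k$ for some $k<n$; injectivity of the transfer forces $a_k$ to vanish in $(M^{\circledcirc_R k})_{C_k}$, so $a_k$ is a finite sum of differences $x_i\otimes y_j - y_j\otimes x_i$, and Lemma \ref{lem:commutator} together with a commutator cancellation in $G$ then pushes the leading term of $\tilde{x}$ into strictly higher filtration. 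Iterating this step and using completeness of the filtration on $W(R;M)$ (Lemma \ref{lem:complete}) concludes that $x\in W^{(n)}(R;M)$. I expect the main bookkeeping burden to be in this last inductive step, but no new ingredient beyond the machinery already developed should be required.
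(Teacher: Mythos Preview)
Your approach matches the paper's exactly: descent via Proposition~\ref{prop:tlogspecialvalues}(3), injectivity via Lemma~\ref{lem:tlogkernel} applied to $G=\ker(\widehat{S}(R;M)\to W(R;M))$ with condition~(3) supplied by Lemma~\ref{lem:commutator}, and the homeomorphism statement by rerunning the filtration argument inside Lemma~\ref{lem:tlogkernel}. One small oversight worth flagging: your commutator verification of condition~(3) only makes sense when $i,j\geq 1$, since $(1+x_0t^0)$ is not a special unit; for $n=1$ the missing $i=0$ case is supplied by the Teichm\"uller element $(1-rmt)(1-mrt)^{-1}\in G$, whose leading term is $1-(rm-mr)t$, and for $n\geq 2$ the difference $ry_n-y_nr$ is already a sum of differences $x_i'\otimes y_{n-i}'-y_{n-i}'\otimes x_i'$ with $i,j\geq 1$ (telescope using $m_2\otimes\cdots\otimes m_n\otimes(rm_1)=(m_2\otimes\cdots\otimes m_nr)\otimes m_1$ in $M^{\otimes_R n}$), so no new idea is needed.
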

\begin{proof}
The map clearly factors through the abelianisation, and by Proposition \ref{prop:tlogspecialvalues} (3), we have $\tlog(1- rm \cdot t) = \tlog(1-mr\cdot t)$, so it factors through $W(R;M)$.

For injectivity, note that by Lemmas \ref{lem:tlogkernel} and \ref{lem:commutator}, the closed subgroup generated by commutators and elements of the form $(1-rm\cdot t)(1-mr\cdot t)^{-1}$ actually agrees with the kernel of $\tlog$ if the transfers are injective.

For the last part, it suffices to check the following stronger version of injectivity: If an element $f\in W(R;M)$ has the property that $\tlog f\in \prod_{n\geq 1} (M^{\circledcirc_R n})^{C_n}$ has filtration at least $k$, then $f$ has filtration at least $k$ as well. But observe that this is exactly what the argument in the proof of \ref{lem:tlogkernel} gives us.
\end{proof}

\begin{lemma}
\label{lem:otherrelations}
$W(R;M)$ agrees with the quotient of $\widehat{S}(R;M)$ by any of the following:
\begin{enumerate}
\item The closed subgroup generated by commutators and all elements of the form $(1+rmt) (1+mrt)^{-1}$ for $r\in R$ and $m\in M$. (These are the relations that appear in our definition of $W(R;M)$, we recall them here for convenience.)
\item The closed subgroup generated by commutators and all elements of the form
\[
(1 - rf) \cdot (1 - fr)^{-1}
\]
where $r\in R$ and $f\in \widehat{T}(R;M)$ with trivial constant term.
\item The closed subgroup generated by all elements of the form
\[
(1 - x_i y_j t^{i+j}) \cdot (1 - y_j x_i t^{i+j})^{-1},
\]
with $i+j\geq 1$. We allow $i=0$ or $j=0$, e.g. $x_0\in R$, so this relation includes the Teichm\"uller relations $\tau(rx)\tau(xr)^{-1}$.
\item The closed subgroup generated by all elements of the form
\[
(1 - fg) \cdot (1 - gf)^{-1}
\]
for elements $f,g\in \widehat{T}(R;M)$ with $f$ or $g$ having trivial constant term.
\end{enumerate}
\end{lemma}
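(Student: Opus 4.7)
Write $N_i \subseteq \widehat{S}(R;M)$ for the closed subgroup appearing in item $(i)$, so that the lemma amounts to the equalities $N_1 = N_2 = N_3 = N_4$. The plan is to verify the two chains $N_1 \subseteq N_2 \subseteq N_4$ and $N_1 \subseteq N_3 \subseteq N_4$, and then to close the loop via $N_4 \subseteq N_1$. This last inclusion will be the main obstacle; note also that the equalities entail that $N_3$ and $N_4$, which are not a priori closed under commutators, in fact contain all commutators of $\widehat{S}(R;M)$, so that $\widehat{S}(R;M)/N_3$ and $\widehat{S}(R;M)/N_4$ are automatically abelian.

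The inclusions involving $(1)$ and $(2)$ are straightforward: the Teichm\"uller relation $(1+rmt)(1+mrt)^{-1}$ of $(1)$ is a specialisation of generators of $(2)$ (take $f=-mt$), of $(3)$ (take $(i,j)=(0,1)$, $x_0=-r$, $y_1=m$), and of the inverse of a generator of $(4)$ (take $f=-mt$, $g=r$); and the generators of $(2)$ and of $(3)$ are themselves specialisations of those of $(4)$. It remains to check that the commutators of $\widehat{S}(R;M)$ already lie in $N_3$, hence in $N_4$. For this I would invoke Lemma \ref{lem:commutator}: the leading term of $[(1+a_n t^n + \ldots),(1+b_m t^m + \ldots)]$ at filtration $n+m$ equals $(a_n b_m - b_m a_n)t^{n+m}$. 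Writing $a_n$ and $b_m$ as finite sums of elementary tensors in $M^{\otimes_R n}$ and $M^{\otimes_R m}$, this leading term is matched by an appropriate finite product of generators of $N_3$ of filtration exactly $n+m$; dividing by that product pushes the commutator into filtration $\geq n+m+1$. Iterating, and using completeness (Lemma \ref{lem:complete}), expresses any commutator as a convergent product of generators of $N_3$.

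The main difficulty is the remaining inclusion $N_4 \subseteq N_1$, for which I would use the ghost map. By Proposition \ref{prop:tlogspecialvalues}(3), $\tlog$ vanishes on every generator of $N_4$, and by Proposition \ref{prop:ghost} it factors through $W(R;M) = \widehat{S}(R;M)/N_1$. If the transfers $(M^{\circledcirc_R n})_{C_n} \to (M^{\circledcirc_R n})^{C_n}$ are all injective --- which, by Lemma \ref{restorfree}, is the case for free bimodules --- then the descended $\tlog$ is moreover injective on $W(R;M)$ by Proposition \ref{prop:ghost}, and so $N_4 \subseteq N_1$ holds in that situation.

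To reduce the general case to the free one, my plan is to argue along a free resolution. The proof of Proposition \ref{prop:coeq} goes through with only notational changes after replacing the Witt relations by the defining relations of any of the $N_i$, yielding that each of the four functors $\widehat{S}(-;-)/N_i$ commutes with reflexive coequalizers in Hausdorff topological groups. Presenting $(R;M)$ as such a coequalizer of free bimodules via Lemma \ref{restorfree} --- at which level the four quotients have just been identified --- then propagates the equalities $N_1 = N_2 = N_3 = N_4$ to the original $(R;M)$, completing the proof.
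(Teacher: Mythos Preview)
Your overall strategy matches the paper's: reduce to free bimodules via reflexive coequalizers (Proposition~\ref{prop:coeq}), and in that case use injectivity of $\tlog$ to identify everything with $\ker(\tlog)$. The reduction step and the argument for $N_4\subseteq N_1$ in the free case are fine.

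There is, however, a genuine gap in your argument that commutators lie in $N_3$. You invoke Lemma~\ref{lem:commutator} to match the leading term of a commutator $c$ against a product $g$ of $N_3$-generators, push $c\cdot g^{-1}$ to higher filtration, and then say ``iterating''. But $c\cdot g^{-1}$ is no longer a commutator, so Lemma~\ref{lem:commutator} tells you nothing about its leading term; there is no reason, for a general bimodule, that this leading term should again be of the form $x_iy_j - y_jx_i$. The iteration simply does not get off the ground a second time.

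The paper closes this gap by a different organisation. Rather than chasing inclusions, it shows directly that in the injective-transfer case \emph{each} of the four subgroups equals $\ker(\tlog)$, by checking that each satisfies the hypotheses of Lemma~\ref{lem:tlogkernel}: each is closed, each lies in $\ker(\tlog)$ (Proposition~\ref{prop:tlogspecialvalues}), and each contains an element with prescribed leading term $(x_iy_j-y_jx_i)t^{i+j}$ for every $i,j$ (for $N_3,N_4$ this is immediate from their generators; for $N_1,N_2$ one uses commutators via Lemma~\ref{lem:commutator} when $i,j\geq 1$ and the Teichm\"uller-type generators when $i=0$). The iterative step you were missing is precisely the content of Lemma~\ref{lem:tlogkernel}: there the remainder is known to lie in $\ker(\tlog)$, and injectivity of the transfers forces its leading term to die in $(M^{\circledcirc_R n})_{C_n}$, hence to be a sum of $(x_iy_j - y_jx_i)$'s, so the process can continue. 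Your argument can be repaired by replacing the bare iteration with an appeal to Lemma~\ref{lem:tlogkernel}, but at that point the chain-of-inclusions scaffolding becomes unnecessary.
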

\begin{proof}
We have to show that all these subgroups of $\widehat{S}(R;M)$ agree. By resolving via reflexive coequalizers, we can reduce to the case where the transfers $\tr: (M^{\circledcirc_R n})_{C_n} \to (M^{\circledcirc_R n})^{C_n}$ are injective. In that case, we claim they all agree with the kernel of $\tlog: \widehat{S}(R;M) \to \prod_{n\geq 1} M^{\circledcirc_R n}$. By Proposition \ref{prop:tlogspecialvalues}, they are all contained in the kernel of $\tlog$, and using Lemma \ref{lem:commutator}, we see that they also satisfy condition (3) of Lemma \ref{lem:tlogkernel}, which then implies the claim.
\end{proof}

\begin{lemma}
\label{lem:products}
$W(-;-)$ commutes with finite products, i.e. given pairs $(R; M)$ and $(S; N)$, the canonical map
\[
W(R\times S; M\times N) \to W(R;M)\times W(S;N)
\]
is an isomorphism.
\end{lemma}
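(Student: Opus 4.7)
The plan is to reduce the statement to a splitting at the level of special units $\widehat S(-;-)$ and then verify that the defining relations of $W$ respect this splitting. The first step is to check that for the $(R\times S)$-bimodule $M\times N$, the $n$-fold tensor power splits as
\[
(M\times N)^{\otimes_{R\times S} n}\ \cong\ M^{\otimes_R n}\times N^{\otimes_S n},
\]
since the orthogonal idempotents $(1_R,0)$ and $(0,1_S)$ in $R\times S$ force any tensor component mixing the $M$- and $N$-parts across a contraction over $R\times S$ to vanish. Taking the product over $n$ and restricting to special units gives a homeomorphism of topological groups $\widehat S(R\times S;M\times N)\cong \widehat S(R;M)\times \widehat S(S;N)$, compatible with the filtrations by $\widehat S^{(n)}$.

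Next I would verify that the defining relations of $W$ split under this identification. Commutators in a product group are products of commutators, so the commutator relations clearly decompose. For the Teichm\"uller relations, a general relation $\tau((r_1,r_2)(m_1,m_2))\tau((m_1,m_2)(r_1,r_2))^{-1}$ in $\widehat S(R\times S;M\times N)$ corresponds under the product identification to
\[
\bigl(\tau(r_1m_1)\tau(m_1r_1)^{-1},\ \tau(r_2m_2)\tau(m_2r_2)^{-1}\bigr),
\]
which is the product of one generator from each factor. Conversely, plugging in $(r,0)$ and $(m,0)$ produces the first-factor generator $(\tau(rm)\tau(mr)^{-1},1)$, and symmetrically for the second factor. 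Hence the (abstract) normal subgroup generated by all relations in the product agrees with the product of the normal subgroups of relations generated in each factor.

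The final step is a topological observation: in a product $G_1\times G_2$ of Hausdorff topological groups, the closed normal subgroup generated by $(H_1\times\{1\})\cup(\{1\}\times H_2)$ equals $\overline{H_1}\times \overline{H_2}$, because the latter is already closed and normal and contains the given generating set. Applied to the subgroup of relations in each factor, this yields the desired splitting
\[
W(R\times S;M\times N)\ \cong\ W(R;M)\times W(S;N),
\]
and unwinding definitions identifies this isomorphism with the canonical map in the statement. The only obstacle I anticipate is bookkeeping around the closure operation: one has to confirm that the closed normal subgroup generated by the relations in the product genuinely corresponds to the product of closed normal subgroups on the right, which rests on the elementary fact that finite products commute with closure in the product topology. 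The algebraic splittings in the first two steps are otherwise essentially formal.
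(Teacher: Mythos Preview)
Your argument is correct and follows essentially the same structure as the paper's proof: both first establish the isomorphism $\widehat S(R\times S;M\times N)\cong \widehat S(R;M)\times \widehat S(S;N)$ and then verify that the closed normal subgroup of relations splits accordingly. The only difference is in the choice of generating relations: you work directly from Definition~\ref{def_Witt}, splitting commutators and Teichm\"uller relations separately, whereas the paper invokes the alternative presentation of Lemma~\ref{lem:otherrelations}(3), where a single family $(1-x_iy_jt^{i+j})(1-y_jx_it^{i+j})^{-1}$ already absorbs the commutators, so only one splitting computation is needed. Your approach is slightly more self-contained; the paper's is marginally shorter by leveraging prior work.
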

\begin{proof}
The map $\widehat{S}(R\times S;M\times N) \to \widehat{S}(R;M)\times \widehat{S}(S;N)$ is an isomorphism of topological groups. By Lemma \ref{lem:otherrelations}, it suffices to check that it sends the closed subgroups generated by elements of the form $(1 - x_i y_j t^{i+j}) \cdot (1 - y_j x_i t^{i+j})^{-1}$ to each other. This follows from
\begin{align*}
&(1 - (a_i,b_i) (a_j,b_j) t^{i+j}) \cdot (1 - (a_j,b_j) (a_i,b_i) t^{i+j})^{-1}\\
=\ &(1- (a_i,0) (a_j,0) t^{i+j}) \cdot (1 - (a_j,0)(a_i,0) t^{i+j})^{-1}\\
&\cdot (1- (0,b_i) (0,b_j) t^{i+j}) \cdot (1 - (0,b_j)(0,b_i) t^{i+j})^{-1}. \qedhere
\end{align*}
\end{proof}

\subsection{The operators and the monoidal structure}\label{sec:operators}

We now construct additional structure on the big Witt vectors with coefficients: Verschiebung maps 
\[
V_n : W(R; M^{\otimes_R n}) \to W(R; M),
\]
Frobenius maps 
\[
F_n : W(R; M)\to W(R; M^{\otimes_R n}),
\]
a $C_n$-action on $W(R; M^{\otimes_R n})$, and a lax symmetric monoidal structure, i.e. external products
\[
\star: W(R;M) \otimes W(S;N) \to W(R \otimes S; M \otimes N) \ .
\]
To do so, we first discuss a preferred set of generators of $W(R;M)$.

\begin{defn}
We let $\tau_n: M^{\times n} \to W(R;M)$ be the map
\[
\tau_n(m_1,\ldots, m_n) = (1 - m_1 \otimes \cdots \otimes m_n t^n).
\]
\end{defn}

\begin{lemma}
\label{lem:teichmullerinvariance}
The images of the $\tau_n$ generate $W(R;M)$ topologically. The maps $\tau_n$ are cyclically invariant, meaning that
\[
\tau_n(m_1,\ldots, m_n) = \tau_n(m_{\sigma(1)}, \ldots, m_{\sigma(n)})
\]
for any $\sigma\in C_n$, and they satisfy
\begin{align*}
\tau_n(m_1,\ldots, m_i r, m_{i+1}, \ldots,m_n) &= \tau_n(m_1,\ldots, m_i, rm_{i+1}, \ldots,m_n),\\
\tau_n(rm_1, \ldots, m_n) &= \tau_n(m_1, \ldots, m_n r).
\end{align*}
\end{lemma}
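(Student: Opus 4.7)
The plan is to derive all four claims directly from Lemma \ref{lem:generators} (topological generation of $\widehat{S}(R;M)$) and from the generalized commutation relations of Lemma \ref{lem:otherrelations}(4), which is the real workhorse.

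For the generation claim, Lemma \ref{lem:generators} shows that $\widehat{S}(R;M)$ is topologically generated by elements of the form $(1 + m_1 \otimes \cdots \otimes m_n\, t^n)$ for pure tensors. By $\Z$-bilinearity of the tensor product we can rewrite $(1 + m_1 \otimes \cdots \otimes m_n\, t^n) = (1 - (-m_1) \otimes m_2 \otimes \cdots \otimes m_n\, t^n) = \tau_n(-m_1, m_2, \ldots, m_n)$, so the images of the $\tau_n$ topologically generate the quotient $W(R;M)$.

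For the cyclic invariance, since $C_n$ is generated by a single shift it suffices to show $\tau_n(m_1, \ldots, m_n) = \tau_n(m_n, m_1, \ldots, m_{n-1})$. Taking $f = m_n t$ and $g = m_1 \otimes \cdots \otimes m_{n-1}\, t^{n-1}$ in $\widehat{T}(R;M)$, both factors have trivial constant term, and $gf = m_1 \otimes \cdots \otimes m_n\, t^n$ while $fg = m_n \otimes m_1 \otimes \cdots \otimes m_{n-1}\, t^n$; Lemma \ref{lem:otherrelations}(4) then equates $(1 - fg)$ with $(1 - gf)$ in $W(R;M)$, which is exactly the claim. The first internal relation $\tau_n(\ldots, m_i r, m_{i+1}, \ldots) = \tau_n(\ldots, m_i, r m_{i+1}, \ldots)$ is already an equality in $\widehat{S}(R;M)$, immediate from the defining relations of the tensor product $M^{\otimes_R n}$ over $R$. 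The second internal relation $\tau_n(r m_1, m_2, \ldots, m_n) = \tau_n(m_1, \ldots, m_{n-1}, m_n r)$ follows from a further application of Lemma \ref{lem:otherrelations}(4), this time with $f = r \in R$ (a constant, viewed as an element of $\widehat{T}(R;M)$) and $g = m_1 \otimes \cdots \otimes m_n\, t^n$, noting that $g$ has trivial constant term so that the hypothesis of (4) is met and $fg = (r m_1) \otimes m_2 \otimes \cdots \otimes m_n\, t^n$, $gf = m_1 \otimes \cdots \otimes m_{n-1} \otimes (m_n r)\, t^n$.

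I do not anticipate any real obstacle here: the genuine content has already been folded into Lemma \ref{lem:otherrelations}, which is precisely what permits us to commute arbitrary elements (and not merely filtration-one Teichmüller elements) past each other inside $W(R;M)$. Given that lemma, the present statement reduces to bookkeeping.
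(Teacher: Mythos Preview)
Your proof is correct and follows exactly the route the paper takes: the paper's proof is the single sentence ``This follows immediately from Lemma~\ref{lem:generators} and Lemma~\ref{lem:otherrelations},'' and you have simply unpacked that sentence carefully. Your choices of $f$ and $g$ in each application of Lemma~\ref{lem:otherrelations}(4) are the natural ones (and could equally well be phrased via part~(3) with $i=0$ or $j=1$).
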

\begin{proof}
This follows immediately from Lemma \ref{lem:generators} and Lemma \ref{lem:otherrelations}.
\end{proof}
In spite of the identities of Lemma $\ref{lem:teichmullerinvariance}$, $\tau_n$ does not descend to the cyclic tensor power since it is not additive. It is however well-defined on the tensor power $M^{\otimes_R n}$, by definition. We will sometimes abuse notation and apply $\tau_n$ to an element of $M^{\otimes_R n}$.

\begin{prop}
\label{prop:verschiebung}
There are continuous Verschiebung homomorphisms
\[
V_n: W(R; M^{\otimes_R n}) \to W(R; M)
\]
for every $n\geq 1$, uniquely characterized by the commutativity of the diagrams
\[
\begin{tikzcd}
 M^{\times n k} \rar\dar{\id} & (M^{\otimes_R n})^{\times k} \rar{\tau_k} & W(R; M^{\otimes_R n})\dar{V_n}\\
 M^{\times n k} \ar[rr,"\tau_{nk}"] & & W(R; M)\rlap{\ .}
\end{tikzcd}
\]
Under the ghost map, $V_n$ is compatible with the additive map
\[
\prod_{k\geq 1} (M^{\circledcirc_R nk})^{C_k} \longrightarrow \prod_{k\geq 1} (M^{\circledcirc_R k})^{C_k} 
\]
given on the factor $(M^{\circledcirc_R nk})^{C_k}$ by the transfer $\tr_{C_k}^{C_{nk}}$ to $(M^{\circledcirc_R nk})^{C_{kn}}$.
They satisfy $V_n V_m = V_{nm}$ as maps $W(R; M^{\otimes_R nm}) \to W(R; M)$.
\end{prop}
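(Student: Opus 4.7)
The plan is to construct $V_n$ as the substitution $t \mapsto t^n$. First, I would define a continuous group homomorphism $\widetilde V_n \colon \widehat{S}(R; M^{\otimes_R n}) \to \widehat{S}(R; M)$ sending a special unit $1 + \sum_{k\geq 1} a_k t^k$, with $a_k \in (M^{\otimes_R n})^{\otimes_R k} = M^{\otimes_R nk}$, to $1 + \sum_{k\geq 1} a_k t^{nk}$. This is the restriction to special units of the continuous ring homomorphism $\widehat T(R; M^{\otimes_R n}) \to \widehat T(R; M)$ that regroups tensor factors, so it is automatically multiplicative, and it is continuous because it carries the filtration $\widehat S^{(k)}$ into $\widehat S^{(nk)}$. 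By design $\widetilde V_n(\tau_k(x)) = \tau_{nk}(x)$.

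Next I would show that $\widetilde V_n$ descends to $V_n \colon W(R; M^{\otimes_R n}) \to W(R; M)$. Using the alternative presentation in Lemma~\ref{lem:otherrelations}(3), it is enough to check that each generator $(1 - xy\, t^{i+j})(1 - yx\, t^{i+j})^{-1}$ with $x \in (M^{\otimes_R n})^{\otimes_R i}$ and $y \in (M^{\otimes_R n})^{\otimes_R j}$ (allowing $x_0 \in R$ or $y_0 \in R$) is sent under $\widetilde V_n$ to a generator of the same type for $(R; M)$, namely $(1 - xy\, t^{n(i+j)})(1 - yx\, t^{n(i+j)})^{-1}$ with $x \in M^{\otimes_R ni}$ and $y \in M^{\otimes_R nj}$. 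Uniqueness is then automatic: by Lemma~\ref{lem:teichmullerinvariance} the images of the $\tau_k$ topologically generate $W(R; M^{\otimes_R n})$, so any continuous homomorphism extending the prescribed values on these generators must agree with $V_n$.

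For the ghost compatibility, I would verify the formula on Teichmüller elements and then conclude by continuity. Using Proposition~\ref{prop:tlogspecialvalues}(2), for $a \in M^{\otimes_R nk}$ one has $\tlog \tau_k(a) = \sum_{j\geq 1} \tr^{C_{jk}}_{C_j} a^j$ with the $j$-th term in the factor $((M^{\otimes_R n})^{\circledcirc_R jk})^{C_{jk}} = (M^{\circledcirc_R njk})^{C_{jk}}$, whereas $\tlog V_n \tau_k(a) = \tlog \tau_{nk}(a) = \sum_{j\geq 1} \tr^{C_{jnk}}_{C_j} a^j$ in $(M^{\circledcirc_R njk})^{C_{jnk}}$. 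Under the identification $(M^{\otimes_R n})^{\circledcirc_R l} = M^{\circledcirc_R nl}$ with $C_l \subseteq C_{nl}$ acting as the stabiliser of the $n$-block partition, the transitivity of transfers $\tr^{C_{jnk}}_{C_{jk}} \circ \tr^{C_{jk}}_{C_j} = \tr^{C_{jnk}}_{C_j}$ yields exactly the map described in the statement, which sends the $l$-th slot to the $nl$-th slot via $\tr^{C_{nl}}_{C_l}$.

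Finally, the identity $V_n V_m = V_{nm}$ is forced by the uniqueness criterion: both sides are continuous homomorphisms $W(R; M^{\otimes_R nm}) \to W(R; M)$ and they agree on the topological generators since $V_n \tau_{mk}(a) = \tau_{nmk}(a) = V_{nm} \tau_k(a)$ for $a \in M^{\otimes_R nmk}$. The only subtle step I anticipate is the descent of $\widetilde V_n$ to the Witt quotient, and there the decisive observation is that the substitution $t \mapsto t^n$ carries a generating relation of type (3) in Lemma~\ref{lem:otherrelations} for $(R; M^{\otimes_R n})$ to one for $(R; M)$ with indices rescaled by $n$, so there is nothing further to check.
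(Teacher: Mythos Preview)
Your proposal is correct and follows essentially the same approach as the paper: you construct $V_n$ by the substitution $t\mapsto t^n$ on $\widehat{S}$, descend using the relations of Lemma~\ref{lem:otherrelations}(3), deduce uniqueness from the fact that the $\tau_k$ topologically generate, verify the ghost formula on Teichm\"uller elements via transitivity of transfers, and obtain $V_nV_m=V_{nm}$ from the uniqueness criterion. This matches the paper's proof step for step.
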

\begin{proof}
Since the images of the maps $M^{\times n k}\to W(R; M^{\otimes_R n})$ topologically generate $W(R; M^{\otimes_R n})$, there is at most one $V_n$ with the desired properties.

For the existence, consider that the homomorphism
$
\widehat{S}(R;M^{\otimes_R n}) \to \widehat{S}(R;M)
$
given by sending
\[
1 + \sum_i a_i t^i \mapsto 1 + \sum_i a_i t^{ni}
\]
preserves the relations given in Lemma \ref{lem:otherrelations}, which were of the form
\[
(1 - x_i y_j t^{i+j}) \sim (1 - y_j x_i t^{i+j}).
\]
Thus, this homomorphism factors to a homomorphism $V_n: W(R; M^{\otimes_R n}) \to W(R; M)$ as desired. Next, we compute that this $V_n$ is compatible with the given description on ghosts. But it suffices to check this on generators. The ghost map sends
\begin{gather*}
\tlog(1 - a_k t^k) = \tr_e^{C_k} a_k t^k + \tr_{C_2}^{C_{2k}} a_k^2 t^{2k} + \ldots,\\
\tlog (V_n(1 - a_k t^k)) = \tlog(1 - a_k t^{nk}) = \tr_e^{C_{nk}} a_k t^{nk} + \tr_{C_2}^{C_{2nk}} a_k^2 t^{2nk} + \ldots.
\end{gather*}
As $\tr_{C_{ik}}^{C_{nik}} \tr_{C_i}^{C_{ik}} = \tr_{C_i}^{C_{nik}}$, the described map on ghosts sends $\tlog(1 - a_k t^k)$ to $\tlog(V_n(1-a_k t^k))$.

Finally, to check that $V_n V_m = V_{nm}$, it suffices that they agree on the image of the $\tau_k$, which follows from the defining properties of the $V_i$.
\end{proof}
Note that this implies in particular that $\tau_k: M^{\times k} \to W(R;M)$ agrees with the composite
\[
\tau_k: M^{\times k} \to M^{\otimes_R k} \xrightarrow{\tau} W(R; M^{\otimes_R k}) \xrightarrow{V_k} W(R;M).
\]

\begin{prop}
\label{prop:weilaction}
There is a continuous homomorphism $\sigma: W(R; M^{\otimes_R n}) \to W(R;M^{\otimes_R n})$, uniquely characterized by the commutativity of the diagrams
\[
\begin{tikzcd}
 M^{\times n k} \rar{}\dar{\mathrm{\sigma}} & (M^{\otimes_R n})^{\times k} \rar{\tau_k} & W(R; M^{\otimes_R n})\dar{\sigma}\\
 M^{\times n k} \rar{}& (M^{\otimes_R n})^{\times k}\rar{\tau_k} & W(R; M^{\otimes_R n}),
\end{tikzcd}
\]
  where the left vertical map is given by $(m_1,\ldots, m_{nk-1}, m_{nk})\mapsto (m_{nk},m_1,\ldots,m_{nk-1})$. It has order $n$, and thus gives a $C_n$-action on $W(R; M^{\otimes n})$, which we refer to as Weyl action. This is compatible with the $C_n$-action on ghost components $\prod_{k\geq 1} (M^{\circledcirc nk})^{C_k}$ obtained degreewise as the residual action of $C_n \cong C_{nk}/C_k$.
\end{prop}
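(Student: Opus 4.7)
The plan is to define $\sigma$ via its image on ghost components, where a cyclic shift is manifestly well-defined, and then transport it back to $W(R;M^{\otimes_R n})$. Uniqueness is immediate from Lemma \ref{lem:teichmullerinvariance}: the $\tau_k$ topologically generate $W(R;M^{\otimes_R n})$, so any continuous homomorphism fitting into the diagram is pinned down on them. For the candidate action on the ghost target, I use the canonical identification $(M^{\otimes_R n})^{\circledcirc_R k}\cong M^{\circledcirc_R nk}$; under it the subgroup $C_k\leq C_{nk}$ generated by the shift by $n$ positions corresponds to the cyclic permutation of the $k$ blocks of length $n$, the quotient $C_{nk}/C_k$ is canonically $C_n$, and it acts on the $C_k$-invariants $(M^{\circledcirc_R nk})^{C_k}$. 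This furnishes a continuous $C_n$-action on $\prod_{k\geq 1}(M^{\circledcirc_R nk})^{C_k}$.

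I first construct $\sigma$ on bimodules $(R;M)$ with all $(M^{\circledcirc_R j})^{C_j}$ torsion free, e.g.\ any free bimodule by Lemma \ref{restorfree}, so that $\tlog$ is a homeomorphism onto a closed subgroup of the ghost target by Proposition \ref{prop:ghost}. The main step is to verify that the $C_n$-action on ghosts preserves this image; by continuity this reduces to checking on the generators $\tau_k(m_1,\ldots,m_{nk})$. By Proposition \ref{prop:tlogspecialvalues}(2), the $lk$-th ghost component of $\tau_k(m_1,\ldots,m_{nk})$ is $\tr^{C_{lk}}_{C_l}\bigl((m_1\otimes\cdots\otimes m_{nk})^l\bigr)$. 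Lifting the generator of $C_n=C_{nlk}/C_{lk}$ to the single-position shift $\alpha\in C_{nlk}$, one checks that $\alpha$ commutes with the transfer $\tr^{C_{lk}}_{C_l}$ on $C_l$-invariants and sends the $l$-fold concatenation $(m_1\otimes\cdots\otimes m_{nk})^l$ to $(m_{nk}\otimes m_1\otimes\cdots\otimes m_{nk-1})^l$. This is exactly the ghost of $\tau_k(m_{nk},m_1,\ldots,m_{nk-1})$, so pulling back through $\tlog$ defines $\sigma$ in the torsion-free case and makes the stated diagram commute.

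For a general bimodule $(R;M)$, I choose a free resolution $(S_1;Q_1)\rightrightarrows(S_0;Q_0)\twoheadrightarrow(R;M)$ via Remark \ref{rem:freeres}. Since reflexive coequalizers in $\bimod$ are computed on underlying abelian groups and sifted colimits of abelian groups commute with tensor products, applying the functor $(R;M)\mapsto (R;M^{\otimes_R n})$ yields a reflexive coequalizer, and Proposition \ref{prop:coeq} then lets the Weyl actions defined on the free levels descend to a continuous $\sigma$ on $W(R;M^{\otimes_R n})$. Order $n$ is cleanest from the diagram: $\sigma^n$ sends $\tau_k(m_1,\ldots,m_{nk})$ to the full block shift $\tau_k(m_{(k-1)n+1},\ldots,m_{nk},m_1,\ldots,m_{(k-1)n})$, which by the block-cyclic invariance in Lemma \ref{lem:teichmullerinvariance} equals $\tau_k(m_1,\ldots,m_{nk})$; by continuity $\sigma^n=\id$. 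Compatibility with the residual $C_n$-action on ghost components is built into the construction.

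The principal obstacle I anticipate is the absence of a naive direct definition of $\sigma$ on $\widehat{S}(R;M^{\otimes_R n})$: a cyclic shift by one position on pure tensors in $M^{\otimes_R nk}$ fails to respect the $\otimes_R$ balance relations across the wrap-around boundary between positions $nk$ and $1$. Passing through the ghost map $\tlog$, where the shift becomes the manifestly well-defined residual $C_{nk}$-action on $M^{\circledcirc_R nk}$, is what makes the construction go through.
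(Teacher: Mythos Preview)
Your proof is correct and follows essentially the same strategy as the paper: uniqueness from topological generation by the $\tau_k$, construction in the torsion-free case by transporting the residual $C_n$-action on ghost components through the embedding $\tlog$, and extension to the general case via free resolutions and Proposition~\ref{prop:coeq}. The only minor variation is that you verify $\sigma^n=\id$ directly from the block-cyclic invariance of $\tau_k$ (Lemma~\ref{lem:teichmullerinvariance}), whereas the paper argues via ghost components together with naturality; both are fine.
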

\begin{proof}
Again, the images of the upper horizontal maps (jointly for all $k$) generate $W(R; M^{\otimes_R n})$ topologically, and so there is at most one homomorphism $\sigma$.
  To see one exists, it is sufficient to do so for $(R;M)$ with torsion-free $(M^{\circledcirc nk})_{C_k}$, since the target is Hausdorff and we can resolve any $(R;M)$ as a reflexive coequalizer of $(R_0;M_0)$ and $(R_1; M_1)$ with torsion-free $(M_i^{\circledcirc nk})_{C_k}$.

In the torsion-free case, we know by Proposition \ref{prop:ghost} that $\tlog$ is a homeomorphism onto its image.
It is therefore sufficient to check that the described $C_n$-action on ghost components restricts to an action on the image of $\tlog$, or more precisely sends $\tlog(1 - m_1 \otimes \ldots \otimes m_{nk} t^k)$ to $\tlog(1 - m_{nk} \otimes m_1\otimes \ldots \otimes m_{nk-1}t^k)$. 

The $ik$-th coefficient of $\tlog(1 - m_1 \otimes \ldots \otimes m_{nk} t^k)$ is given (Prop. \ref{prop:tlogspecialvalues}) by 
\[
\tr_{C_i}^{C_{ik}} (m_1 \otimes \ldots \otimes m_{nk})^{\otimes i}, 
\]
which is shifted by a generator of $C_{nik}$ (representing the residual action of a generator of $C_n = C_{nik}/C_{ik}$) to the element
\[
\tr_{C_i}^{C_{ik}} (m_{nk} \otimes m_1\otimes \ldots \otimes m_{nk-1})^{\otimes i},
\]
which is the $ik$-th coefficient of $\tlog(1 -m_{nk} \otimes m_1\otimes \ldots \otimes m_{nk-1} t^k)$.

The $n$-th power of $\sigma$ acts as identity on ghost components, and because of naturality, this implies that $\sigma$ always has order $n$.
\end{proof}

\begin{prop}
\label{prop:frobenius}
There are continuous Frobenius homomorphisms
\[
F_n: W(R; M) \longrightarrow W(R; M^{\otimes_R n})
\]
uniquely characterized by the commutativity of the diagrams
\[
\begin{tikzcd}
M^{\times k}\dar{(-)^{\times n/d}} \ar[rr,"\tau_k"] & & W(R;M)\dar{F_n}\\
M^{\times{kn/d}} \rar{\tau_{k/d}} & W(R; M^{\otimes_R n})\rar{\sum\limits_{\sigma\in C_d} \sigma} & W(R; M^{\otimes_R n})\rlap{\ .}
\end{tikzcd}
\]
 Here $d$ is the greatest common divisor of $k$ and $n$, the left vertical map sends $(m_1, \ldots, m_k)$ to $(m_1, \ldots, m_k, \ldots, m_1, \ldots, m_k)$ (i.e. $\frac{n}{d}$ consecutive blocks of $(m_1,\ldots, m_k)$), and the sum on the lower right is over the subgroup $C_d \subseteq C_n$.

$F_n$ is compatible with the map $\prod_{k\geq 1} (M^{\circledcirc_R k})^{C_k} \to \prod_{k\geq 1} (M^{\circledcirc_R kn})^{C_k}$ that projects away factors whose index is not divisible by $n$, and includes $(M^{\circledcirc_R {kn}})^{C_{kn}}$  into $(M^{\circledcirc_R kn})^{C_k}$.
\end{prop}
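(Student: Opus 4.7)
\emph{Proof plan.} The plan is to mirror the construction of the Verschiebung (Proposition~\ref{prop:verschiebung}) and the Weyl action (Proposition~\ref{prop:weilaction}). Uniqueness is immediate from Lemma~\ref{lem:teichmullerinvariance}: the images of the $\tau_k$ topologically generate $W(R;M)$, and the defining diagram dictates the value of $F_n$ on each such generator, so any continuous homomorphism realising the diagrams is unique.

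For existence I would first reduce to bimodules $(R;M)$ whose coinvariants $(M^{\circledcirc_R k})_{C_k}$ are torsion-free for every $k$. Lemma~\ref{restorfree} supplies such resolutions, and by Proposition~\ref{prop:coeq} the functor $W(-;-)$ preserves reflexive coequalizers in Hausdorff topological groups, so a compatible $F_n$ constructed on a resolution descends. In this torsion-free setting Proposition~\ref{prop:ghost} identifies both $W(R;M)$ and $W(R;M^{\otimes_R n})$ with closed subgroups of their ambient ghost products via $\tlog$. I would then define the continuous group homomorphism $\phi_n\colon \prod_{k\geq 1}(M^{\circledcirc_R k})^{C_k} \to \prod_{k\geq 1}(M^{\circledcirc_R kn})^{C_k}$ by the stated project-and-include formula, and the task becomes showing that $\phi_n$ carries $\tlog(W(R;M))$ into $\tlog(W(R;M^{\otimes_R n}))$ and realises the defining diagram on the image. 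Naturality of $\phi_n$ in $(R;M)$ would then extend $F_n$ to arbitrary bimodules by descent along the resolution, exactly as at the end of the proof of Proposition~\ref{prop:weilaction}.

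Checking compatibility only requires a topological generating set, so it suffices to evaluate both sides of the defining diagram on a Teichm\"uller $\tau_k(m_1,\ldots,m_k)$. By Proposition~\ref{prop:tlogspecialvalues}(2), its $k'$-th ghost coordinate is $\tr_{C_{k'/k}}^{C_{k'}}\bigl((m_1\otimes\cdots\otimes m_k)^{\otimes k'/k}\bigr)$ when $k\mid k'$ and zero otherwise, so $\phi_n$ sends it, at the output coordinate indexed by $j$ (with $k\mid jn$), to $\tr_{C_{jn/k}}^{C_{jn}}\bigl((m_1\otimes\cdots\otimes m_k)^{\otimes jn/k}\bigr)$ viewed inside $(M^{\circledcirc_R jn})^{C_j}$. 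On the other hand, Propositions~\ref{prop:verschiebung} and~\ref{prop:weilaction} compute the ghost of $\sum_{\sigma\in C_d}\sigma\cdot\tau_{k/d}\bigl((m_1,\ldots,m_k)^{n/d}\bigr)$ at the same coordinate as the sum, indexed by the residual $C_d\subseteq C_n\cong C_{jn}/C_j$, of the smaller transfer $\tr_{C_{jd/k}}^{C_j}$ of the same tensor power.

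The main obstacle is the combinatorial identity comparing these two expressions. I expect it to follow from the standard double-coset decomposition of $\tr_{C_{jn/k}}^{C_{jn}}$ through the intermediate subgroup $C_j\subseteq C_{jn}$, combined with the $k$-periodicity of $(m_1\otimes\cdots\otimes m_k)^{\otimes jn/k}$ in $M^{\circledcirc_R jn}$: this periodicity reorganises the $C_{jn}$-orbit sum as a sum over the Weyl $C_d$-shifts of the smaller $C_j$-transfer. Once this identity is in place, $\phi_n$ restricts to the desired $F_n$, the defining diagram holds by construction on generators, and the general case follows by naturality under the resolution.
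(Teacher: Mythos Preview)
Your proposal is correct and follows essentially the same approach as the paper: reduce to the case where $\tlog$ is an embedding, define the candidate map on ghost components, and verify on Teichm\"uller generators that it lands in the image of $\tlog$ with the prescribed values. The only cosmetic difference is in how the transfer identity is justified: the paper composes the $C_d$-sum with the inner transfer to obtain $\tr_{C_i}^{C_{ik}}$ and then invokes the bicartesian square
\[
\begin{tikzcd}
C_i \rar\dar & C_{ik}\dar\\
C_{in/d}\rar & C_{ink/d}
\end{tikzcd}
\]
to identify this with $\tr_{C_{in/d}}^{C_{ink/d}}$ on $C_{in/d}$-fixed points, whereas you phrase the same calculation as a double-coset decomposition of $\operatorname{res}^{C_{jn}}_{C_j}\tr_{C_{jn/k}}^{C_{jn}}$; in the abelian setting these are the same computation.
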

\begin{proof}
As the given diagram determines $F_n$ on the images of all the $\tau_k$, which topologically generate $W(R;M)$, there is at most one such $F_n$.

Existence can again be checked in the case where $\tlog$ is an embedding. There, we first check that the described map on ghost components sends $\tlog(1 - m_1\otimes \cdots \otimes m_k  t^k)$ to the value compatible with the commutative diagram in the claim. As
\[
\tlog(1 - a_k t^k) = \sum_i \tr_{C_i}^{C_{ki}} a_k^i t^{ik},
\]
a sum whose summands are of degrees divisible by $k$, if we pick out the summands whose degree is divisible by $n$ (and put them in degrees divided by $n$), we get a sum
\begin{equation}
\label{uppertlog}
\sum_i\tr_{C_{in/d}}^{C_{ink/d}} a_k^{in/d} t^{ik/d}
\end{equation}
ranging over degrees which are multiples of the least common multiple $\frac{nk}{d}$ of $n$ and $k$ (with $d$ again the greatest common divisor). 

This needs to match with the transfers of the residual $C_n$-actions on  $\tlog(1-a_k^{n/d} t^{k/d})$. We observe that $\tlog(1-a_k^{n/d} t^{k/d})$ is given by
\begin{equation}
\label{eq:lowertlog}
\sum_i\tr_{C_{i}}^{C_{ik/d}} a_k^{in/d} t^{ik/d}.
\end{equation}
To each term we apply the transfer $\tr_{C_{ik/d}}^{C_{ik}}$  of the residual action of $C_n \cong (C_{ink/d})/(C_{ik/d})$. We obtain
\begin{equation}
  \label{eq:lowertlogtransfer}
\sum_i\tr_{C_{i}}^{C_{ik}} a_k^{in/d} t^{ik/d}.
\end{equation}
We now use that, whenever they are both defined (i.e. on $C_{in/d}$-fixed points), the transfers $\tr_{C_{in/d}}^{C_{ink/d}}$ and $\tr_{C_i}^{C_{ik}}$ agree, since in the diagram
\[
 \begin{tikzcd}
  C_i \dar\rar & C_{ik} \dar \\
  C_{in/d} \rar & C_{ink/d} 
 \end{tikzcd}
\]
  the induced map on the cokernels of the rows is an isomorphism. So \eqref{uppertlog} and \eqref{eq:lowertlogtransfer} agree.

We have just shown that for any $a_k \in M^{\times k}$, $\tlog(1 - a_k t^k)$ is sent by the homomorphism $\prod_{k\geq 1} (M^{\circledcirc_R k})^{C_k} \to \prod_{k\geq 1} (M^{\circledcirc_R kn})^{C_k}$ to $\tr_e^{C_n} \tlog(1 - a_k^{n/d} t^{k/d})$, the transfer taken for the residual action of $C_n$.
  This means that this homomorphism restricts to a map $F_n: W(R; M) \to W(R; M^{\otimes n})$ if $\tlog$ is an embedding.
This map $F_n$ furthermore satisfies the claimed commutative diagrams (as we just proved the corresponding statement on ghosts.)
\end{proof}

The final piece of structure we want to discuss regards multiplicativity. The Witt vectors of commutative ring admit a natural ring structure, which is not present in the general case of a possibly noncommutative ring and a possibly nontrivial coefficient bimodule. Rather, we will see that $W(-;-)$ is lax symmetric monoidal as a functor of bimodules. For $R$ a commutative ring, this lax symmetric monoidal structure gives rise to a commutative ring structure on $W(R)$, as the composite map
\[
W(R)\otimes W(R) \to W(R\otimes R) \stackrel{\mu_*}{\longrightarrow} W(R),
\]
since for a commutative ring the multiplication map $\mu : R\otimes R\to R$ is a ring homomorphism (see also Corollary \ref{Wring}).

We recall from \S\ref{secprelim} that the tensor product of two bimodules $(R;M)$ and $(S;N)$ is $(R\otimes S;M\otimes N)$, where the tensor products are over $\mathbb{Z}$.

\begin{prop}
\label{prop:laxmonoidal}
The functor $W(-;-) :  \bimod\to Ab$ admits a lax symmetric monoidal structure, where the maps
\[
W(R;M) \otimes W(S; N) \xrightarrow{*} W(R\otimes S; M\otimes N)
\]
correspond to continuous bilinear maps
\[
W(R;M) \times W(S; N) \xrightarrow{*} W(R\otimes S; M\otimes N)
\]
uniquely characterized by the formula
\[
\tau_k(a_k) * \tau_l(b_l) = \sum_{\sigma\in C_d} \tau_{kl/d}(s(a_k^{\times l/d} \times (\sigma b_l)^{\times k/d})),
\]
for all $a_k \in M^{\times k}$, $b_l\in N^{\times l}$, where $d$ is the greatest common divisor of $k$ and $l$, and $s$ refers to the shuffle map $M^{\times kl/d} \times N^{\times kl/d}\to (M\otimes N)^{\times kl/d}$.
The map $W(R;M) \otimes W(S, N) \to W(R\otimes S, M\otimes N)$ is compatible on ghost components with the map
\[
\prod_{n\geq 1} (M^{\circledcirc_R n})^{C_n} \otimes \prod_{n\geq 1} (N^{\circledcirc_S n})^{C_n}\to \prod_{n\geq 1} ((M\otimes N)^{\circledcirc_{R\otimes S} n})^{C_n}
\]
which is given by the shuffle 
$
(M^{\circledcirc_R n})^{C_n} \otimes (N^{\circledcirc_S n})^{C_n} \to ((M\otimes N)^{\circledcirc_{R\otimes S} n})^{C_n}
$.
\end{prop}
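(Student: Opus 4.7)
The plan is to mimic the strategy used for the Frobenius and Weyl action in Propositions \ref{prop:frobenius} and \ref{prop:weilaction}: resolve by free bimodules so that the ghost map becomes a closed embedding (via Lemma \ref{restorfree} and Proposition \ref{prop:ghost}), construct the product there by restricting an explicit shuffle product defined on ghost components, and then extend to arbitrary bimodules using Proposition \ref{prop:coeq}. Uniqueness of a bilinear, continuous $*$ satisfying the given formula is immediate from Lemma \ref{lem:teichmullerinvariance}, since the $\tau_k(a_k)$ topologically generate $W(R;M)$.

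First I would introduce the candidate on ghost components. Degreewise one has a natural shuffle
\[
(M^{\circledcirc_R n})^{C_n}\times(N^{\circledcirc_S n})^{C_n}\longrightarrow ((M\otimes N)^{\circledcirc_{R\otimes S} n})^{C_n},
\]
(the underlying shuffle $M^{\otimes_R n}\otimes N^{\otimes_S n}\to (M\otimes N)^{\otimes_{R\otimes S} n}$ is $C_n$-equivariant for the diagonal action), and assembling these in each degree gives a continuous bilinear map $\mathrm{sh}$ between the relevant products of fixed points. In the torsion-free case, where $\tlog$ is a topological embedding, it suffices to verify that $\mathrm{sh}$ carries the image of $\tlog\times\tlog$ into the image of $\tlog$, and that the resulting bilinear product on Witt vectors satisfies the formula from the proposition. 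Bilinearity and continuity of $*$ are then automatic from those of $\mathrm{sh}$.

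The key calculation is to evaluate $\mathrm{sh}(\tlog(\tau_k(a_k)),\tlog(\tau_l(b_l)))$ and match it with $\tlog$ of the right hand side of the formula. Using $\tlog(\tau_k(a_k))=\sum_i \tr_{C_i}^{C_{ik}}(a_k^{\otimes i})\,t^{ik}$ from Proposition \ref{prop:tlogspecialvalues}, the shuffle product vanishes outside degrees that are common multiples of $k$ and $l$; setting $d=\gcd(k,l)$, in degree $ikl/d$ the relevant contribution is the shuffle of $\tr_{C_{il/d}}^{C_{ikl/d}}(a_k^{\otimes il/d})$ and $\tr_{C_{ik/d}}^{C_{ikl/d}}(b_l^{\otimes ik/d})$. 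The double coset formula for the diagonal $C_{ikl/d}$-action expands this as a sum over $C_{il/d}\backslash C_{ikl/d}/C_{ik/d}$, which has exactly $d$ double cosets, parametrized by $\sigma\in C_d$. Tracking the induced cyclic shuffles of tensor factors carefully, this sum equals $\sum_{\sigma\in C_d}\tr_{C_i}^{C_{ikl/d}}\,s(a_k^{\times l/d}\times (\sigma b_l)^{\times k/d})$, which is precisely the $ikl/d$-th ghost coefficient of the claimed formula. Executing this double coset bookkeeping cleanly is the main technical obstacle in the proof.

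With $*$ constructed and the formula verified in the torsion-free case, extension to an arbitrary pair $(R;M),(S;N)$ follows from Proposition \ref{prop:coeq}: choose free resolutions of both, note that both the shuffle on ghosts and the formula itself are manifestly natural in the bimodule inputs, and pass to the reflexive coequalizer. The lax symmetric monoidal axioms---associativity, the symmetry constraint, and the unit axioms for $\tau_1(1)\in W(\mathbb{Z};\mathbb{Z})=W(\mathbb{Z})$ (whose ghost is $(1,1,1,\ldots)$, a strict unit for the degreewise shuffle)---similarly reduce in the torsion-free case to the corresponding strict properties of the shuffle product on ghost components, and then propagate to the general case by naturality via the resolution argument. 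The required identities on generators, such as $\tau_1(1)*\tau_k(a_k)=\tau_k(a_k)$ and the symmetry $\tau_k(a_k)*\tau_l(b_l)=\tau_l(b_l)*\tau_k(a_k)$ under the swap $(R\otimes S,M\otimes N)\cong(S\otimes R,N\otimes M)$, can also be read off directly from the explicit formula.
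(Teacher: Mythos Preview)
Your proposal is correct and follows essentially the same approach as the paper: uniqueness from topological generation by the $\tau_k$, reduction to the free case via reflexive coequalizers (Proposition \ref{prop:coeq}), and verification there by showing that the degreewise shuffle on ghost components restricts to the image of $\tlog$. The only difference is organizational: where the paper carries out the key transfer computation by repeated use of the projection formula $\tr_H^G(x\cdot y)=x\cdot\tr_H^G(y)$ for $G$-invariant $x$, together with one explicit bicartesian square of cyclic groups, you package the same calculation as a single application of the abelian Mackey/double coset formula for $\tr_{C_{il/d}}^{C_{ikl/d}}(-)\cdot\tr_{C_{ik/d}}^{C_{ikl/d}}(-)$, using that $C_{il/d}\cap C_{ik/d}=C_i$ and $|C_{il/d}\backslash C_{ikl/d}/C_{ik/d}|=d$.
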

\begin{proof}
  Uniqueness again follows from the fact that the images of the $\tau_k$ form a set of topological generators. Since a reflexive coequalizer diagram in Hausdorff abelian groups is also an underlying reflexive coequalizer diagram in Hausdorff spaces, and reflexive coequalizers in Hausdorff abelian groups commute with finite products, if we choose resolutions of $(R;M)$ and $(S;N)$ by reflexive coequalizers, the diagram
  \[
    \begin{tikzcd}
      W(R_1;M_1)\times W(S_1;N_1)\rar[shift left,"f"]\rar[shift right,"g"'] & W(R_0,M_0)\times W(S_0;N_0) \lar\rar & W(R;M) \times W(S;N)
    \end{tikzcd}
  \]
  is a reflexive coequalizer diagram in Hausdorff spaces.  
  Thus, a continuous map $*$ as desired can be extended from the case of free rings and bimodules to all (and is then easily seen to be bilinear in general). In the free case, $\tlog$ is an embedding, and existence follows once we check that the described map on ghost components acts in a compatible way on $\tlog(\tau_k(a_k)) \otimes \tlog(\tau_l(b_l))$.

We have
\[
\tlog(\tau_k(a_k)) = \sum_i \tr_{C_i}^{C_{ik}} a_k^i t^{ki},\quad \tlog(\tau_l(b_l)) = \sum_i \tr_{C_i}^{C_{il}} b_l^i t^{li},
\]
so if we form the degreewise product (via the maps $(M^{\circledcirc_R n})^{C_n} \otimes (N^{\circledcirc_S n})^{C_n} \to ((M\otimes N)^{\circledcirc_{R\otimes S} n})^{C_n}$), we obtain
\[
\sum_i \left(\tr_{C_{il/d}}^{C_{ikl/d}} a_k^{il/d}\right) \left(\tr_{C_{ik/d}}^{C_{ikl/d}} b_l^{ik/d}\right) t^{ikl/d}.
\]
We need to show that this agrees with
\[
  \tlog\left(\sum_{\sigma\in C_d} \tau_{kl/d}(s(a_k^{\times l/d} \times (\sigma b_l)^{\times k/d}))\right),
\]
which is given by an appropriate shuffle of
\begin{align*}
\sum_{\sigma\in C_d} \sum_i \tr_{C_i}^{C_{ikl/d}} (a_k^{il/d} \otimes (\sigma b_l)^{ik/d})t^{ikl/d}
=& \sum_i \tr_{C_i}^{C_{ikl/d}} \left(a_k^{il/d} \otimes \sum_{\sigma\in C_d}(\sigma b_l)^{ik/d}\right)t^{ikl/d}\\
=& \sum_i \tr_{C_{il/d}}^{C_{ikl/d}}\tr_{C_i}^{C_{il/d}} \left(a_k^{il/d} \otimes \tr_{C_{ik/d}}^{C_{ik}} b_l^{ik/d}\right)t^{ikl/d}\\
=& \sum_i \tr_{C_{il/d}}^{C_{ikl/d}} \left(a_k^{il/d} \otimes \tr_{C_i}^{C_{il/d}}\tr_{C_{ik/d}}^{C_{ik}} b_l^{ik/d}\right)t^{ikl/d}\\
=& \sum_i \tr_{C_{il/d}}^{C_{ikl/d}} \left(a_k^{il/d} \otimes \tr^{C_{ikl/d}}_{C_{ik/d}} b_l^{ik/d}\right)t^{ikl/d}\\
=& \sum_i \left(\tr_{C_{il/d}}^{C_{ikl/d}} a_k^{il/d}\right)\otimes  \left(\tr^{C_{ikl/d}}_{C_{ik/d}} b_l^{ik/d}\right)t^{ikl/d},
\end{align*}
where the third and fifth equalities use that $\tr_H^G$ is linear with respect to multiplication with $G$-invariant elements, and the fourth equality uses that $l/d$ and $k$ are coprime as follows:
\[
\begin{tikzcd}
C_i \rar\dar & C_{ik} \dar\\
C_{il/d} \rar  & C_{ikl/d} \\
\end{tikzcd}
\]
is a bicartesian diagram of abelian groups. This means that we have a double coset formula of the form
\[
\begin{tikzcd}
A^{C_i} \dar{\tr_{C_i}^{C_{il/d}}} & A^{C_{ik}} \lar\dar{\tr_{C_{ik}}^{C_{ikl/d}}}\\
A^{C_{il/d}}  & A^{C_{ikl/d}} \lar\\
\end{tikzcd}
\]
for any group $A$ with $C_{ikl/d}$-action. In our case, we use this to see that
\[
\tr_{C_i}^{C_{il/d}}\tr_{C_{ik/d}}^{C_{ik}} b_l^{ik/d} = \tr_{C_{ik}}^{C_{ikl/d}}\tr_{C_{ik/d}}^{C_{ik}} b_l^{ik/d} = \tr^{C_{ikl/d}}_{C_{ik/d}} b_l^{ik/d}.
\]

This shows that the described map on ghost components is compatible with the claimed value of $\tau_k(a_k) * \tau_l(b_l)$. It follows that there is a natural transformation
\[
W(R;M) \otimes W(S;N) \to W(R\otimes S; M\otimes N)
\]
as claimed. The associativity and symmetry conditions of a lax symmetric monoidal structure can again be reduced to the case of injective $\tlog$, where they follow from the corresponding formula on ghost components.
\end{proof}

The following are immediate consequences of the  symmetric monoidal structure. 

\begin{cor}\label{Wmod}
Let $R$ be a commutative ring and $M$ an $R$-module (considered as an $R$-bimodule). The module structure $l_M : R\otimes M\to M$ and the multiplication $\mu_R$ of $R$ define a $W(R)$-module structure
 \[
 W(R;R)\otimes W(R;M)\stackrel{*}{\longrightarrow}W(R\otimes R;R\otimes M)\xrightarrow{(\mu_R,l_M)_\star}W(R;M).
 \]
\end{cor}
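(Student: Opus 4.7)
The strategy is to recognise Corollary \ref{Wmod} as a formal consequence of the lax symmetric monoidal structure of Proposition \ref{prop:laxmonoidal}, once we identify $(R;M)$ as a module over the monoid $(R;R)$ in the symmetric monoidal category $\bimod$. Since $R$ is commutative, $(R;R)$ is a commutative monoid in $\bimod$ by the lemma following Lemma \ref{monbimod} (applied to $R$ itself regarded as a commutative $R$-algebra via $\mu_R$). This is the same monoid structure that exhibits $W(R) = W(R;R)$ as a commutative ring in the forthcoming Corollary \ref{Wring}.

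The next step is to verify that $(R;M)$ carries the structure of a module over $(R;R)$ in $\bimod$, with action map
\[
(\mu_R, l_M) : (R \otimes R,\, R \otimes M) \longrightarrow (R,\, M).
\]
Here $\mu_R$ is a ring homomorphism by commutativity of $R$, and $l_M$ is a morphism of $(R \otimes R)$-bimodules when $M$ is equipped with the $(R \otimes R)$-bimodule structure obtained by restriction along $\mu_R$; this compatibility reduces to the identity $(rxs)(r'ms') = (rr')(xm)(ss')$, which holds because $R$ is commutative and $M$ is an $R$-module. Associativity and unitality of this action in $\bimod$ are exactly the associativity and unit axioms of the $R$-module $M$.

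Finally, applying the lax symmetric monoidal functor $W(-;-) : \bimod \to \Ab$ of Proposition \ref{prop:laxmonoidal} to this data produces, by functoriality, a $W(R)$-module structure on $W(R;M)$ whose action map is precisely the composite
\[
W(R;R) \otimes W(R;M) \xrightarrow{\,*\,} W(R \otimes R;\, R \otimes M) \xrightarrow{(\mu_R, l_M)_\star} W(R;M)
\]
displayed in the statement. The required associativity and unitality of this action are inherited from the coherences of the lax symmetric monoidal structure on $W(-;-)$ together with the module axioms verified upstairs. There is no substantial obstacle: the entire argument is an instance of the general principle that a lax symmetric monoidal functor sends monoids to monoids and modules to modules, and all compatibility checks in $\bimod$ are immediate consequences of the commutativity of $R$ and the $R$-module axioms for $M$.
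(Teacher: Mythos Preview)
Your proposal is correct and follows essentially the same approach as the paper's proof: both verify that $(\mu_R,l_M)$ is a morphism in $\bimod$ making $(R;M)$ a module over the commutative monoid $(R;R)$, and then invoke the lax symmetric monoidal structure on $W(-;-)$ to transport this to abelian groups. You spell out a few more details than the paper (the explicit bilinearity identity and the general principle that lax symmetric monoidal functors preserve modules), but the argument is the same.
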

\begin{proof}
One checks that the map $(\mu_R,l_M) : (R \otimes R;R \otimes M) \to (R;M)$ is a map in $\bimod$ which is straightforward\footnote{Here one really needs that $M$ is an $R$-module considered as a bimodule as opposed to a genuine bimodule.}. Then it follows that $(R;M)$ is a module in $\bimod$ over the commutative monoid $(R; R)$. Thus the claim follows since $W(-;-)$ is lax symmetric monoidal.  
\end{proof}

\begin{cor}\label{Wring}
For  every commutative ring $R$ and every $R$-algebra $R\to M$, the multiplication maps of $R$ and $M$ define a multiplication
\[
W(R;M)\otimes W(R;M)\stackrel{*}{\longrightarrow}W(R\otimes R;M\otimes M)\xrightarrow{(\mu_R,\mu_M)_\star}W(R;M)
\]
making $W(R;M)$ into a $W(R)$-algebra. It is commutative if $M$ is commutative. \qed
\end{cor}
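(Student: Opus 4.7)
The strategy is to recognize that everything is already formally encoded in the lax symmetric monoidal structure established in Proposition \ref{prop:laxmonoidal}, so the proof amounts to correctly interpreting $(R;M)$ as a (commutative) monoid in $\bimod$ and transporting the structure through $W(-;-)$.

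First, I would observe that the data of a commutative ring $R$ together with an $R$-algebra $\eta: R \to M$ promotes $(R;M)$ canonically to a monoid in $\bimod$ via Lemma \ref{monbimod}, taking $\eta_l = \eta_r = \eta$ (which is automatically central since it lands in a commutative image when $M$ is commutative, or more generally is assumed central as part of the $R$-algebra data). Under this identification, the monoid multiplication $(R \otimes R; M \otimes M) \to (R;M)$ is precisely the pair $(\mu_R, \mu_M)$ appearing in the statement. Since $W(-;-): \bimod \to \Ab$ is lax symmetric monoidal (Proposition \ref{prop:laxmonoidal}) and lax monoidal functors send monoids to monoids, the composite
\[
W(R;M) \otimes W(R;M) \xrightarrow{*} W(R\otimes R; M \otimes M) \xrightarrow{(\mu_R,\mu_M)_\star} W(R;M)
\]
endows $W(R;M)$ with a ring structure, together with a unit coming from the unit $(\Z;\Z) \to (R;M)$ of the monoid.

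For the $W(R)$-algebra structure, I would note that $(R;R)$ is itself a monoid in $\bimod$ (via the identity morphisms $\id: R \to R$ as both $\eta_l$ and $\eta_r$), and the $R$-algebra map $\eta: R \to M$ defines a morphism of bimodules $(R;R) \to (R;M)$ which is in fact a morphism of monoids in $\bimod$, since $\eta$ is a ring homomorphism intertwining the multiplications and units. Applying the lax monoidal functor $W(-;-)$ then produces a ring homomorphism $W(R) = W(R;R) \to W(R;M)$, which one checks directly equips $W(R;M)$ with the structure of a $W(R)$-algebra (this is essentially the content of Corollary \ref{Wmod} combined with the ring structure just constructed).

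Finally, if $M$ is a commutative $R$-algebra, then $(R;M)$ is a commutative monoid in $\bimod$ by the lemma preceding Corollary \ref{Wmod}, and since $W(-;-)$ is lax \emph{symmetric} monoidal it sends commutative monoids to commutative monoids, so $W(R;M)$ is commutative. There is no real obstacle here; the only thing one might want to double-check is that the symmetry isomorphism in $\bimod$ is matched correctly with the symmetry in $\Ab$ under the lax monoidal structure of $W(-;-)$, but this is exactly the symmetry axiom verified already in the proof of Proposition \ref{prop:laxmonoidal}.
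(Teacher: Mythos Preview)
Your proposal is correct and is precisely the argument the paper has in mind: the corollary is marked with \qed\ in the paper because it follows formally from Proposition \ref{prop:laxmonoidal} and Lemma \ref{monbimod} exactly as you describe, namely by recognizing $(R;M)$ as a (commutative) monoid in $\bimod$ with $\eta_l=\eta_r$ the algebra map, and transporting through the lax symmetric monoidal functor $W(-;-)$. The paper's proof of the preceding Corollary \ref{Wmod} spells out the analogous reasoning, confirming this is the intended route.
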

\begin{rem}
In the last corollary we could have allowed two different $R$-algebra structures on $M$ (cf. Lemma \ref{monbimod}) to obtain a ring structure on $W(R;M)$. But in general it would then not be a $W(R)$-algebra.
\end{rem}

\begin{cor}\label{Wdual}
Let $R$ be a commutative ring and $M$ an $R$-module with dual $M^\vee:=\hom_R(M,R)$. The evaluation map $\ev : M^\vee\otimes M\to R$ defines a $W(R)$-bilinear pairing
\[
\langle-,-\rangle : W(R;M^{\vee})\otimes W(R;M)\stackrel{*}{\longrightarrow}W(R\otimes R;M^{\vee}\otimes M)\xrightarrow{(\mu_R,\ev)_\star}W(R).
\]
\end{cor}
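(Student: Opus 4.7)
The proof mirrors the ones of Corollaries \ref{Wmod} and \ref{Wring}: define the pairing as the composite
\[
W(R; M^\vee) \otimes W(R; M) \xrightarrow{*} W(R \otimes R; M^\vee \otimes M) \xrightarrow{(\mu_R, \ev)_\star} W(R),
\]
using the lax symmetric monoidal structure from Proposition \ref{prop:laxmonoidal}, and deduce $W(R)$-bilinearity from general properties of lax symmetric monoidal functors. The only thing that needs genuine verification is that $(\mu_R, \ev) : (R \otimes R; M^\vee \otimes M) \to (R; R)$ is a morphism in $\bimod$.

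That $\mu_R$ is a ring homomorphism is exactly commutativity of $R$. For $\ev$, one uses that $M$ and $M^\vee$ are $R$-modules viewed as $R$-bimodules with coinciding left and right actions to compute
\[
\ev\bigl((r_1 \otimes r_2) \cdot (\phi \otimes m) \cdot (r_3 \otimes r_4)\bigr) = \ev\bigl((r_1 r_3 \phi) \otimes (r_2 r_4 m)\bigr) = r_1 r_2 r_3 r_4 \phi(m),
\]
which matches $(r_1 \otimes r_2) \cdot_{\mu_R} \ev(\phi \otimes m) \cdot_{\mu_R} (r_3 \otimes r_4) = r_1 r_2 \phi(m) r_3 r_4$; commutativity of $R$ is used to freely reorder the scalars, and this is the step where the assumption that $M$ is a genuine module (not just a bimodule) intervenes in the same way as in the footnote to Corollary \ref{Wmod}.

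Granted this, $\Z$-bilinearity and continuity of $\langle-,-\rangle$ are automatic from the construction. For $W(R)$-bilinearity, the same formal argument as in Corollaries \ref{Wmod} and \ref{Wring} applies: $(R;R)$ is a commutative monoid in $\bimod$ by Lemma \ref{monbimod}, both $(R; M^\vee)$ and $(R; M)$ are modules over it, and the morphism $(\mu_R,\ev) : (R;M^\vee) \otimes (R;M) \to (R;R)$ is $(R;R)$-bilinear (which is again the $R$-bilinearity of $\ev$, using commutativity of $R$). Applying the lax symmetric monoidal functor $W(-;-)$ transports this into a $W(R)$-bilinear pairing of $W(R)$-modules $W(R;M^\vee)$ and $W(R;M)$ with values in $W(R)$. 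No serious obstacle arises; all the substantive work has been done in setting up the lax symmetric monoidal structure of Proposition \ref{prop:laxmonoidal}.
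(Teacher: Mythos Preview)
Your proposal is correct and follows exactly the approach the paper takes: the paper states Corollary \ref{Wdual} without proof, treating it as an immediate consequence of the lax symmetric monoidal structure in the same spirit as Corollaries \ref{Wmod} and \ref{Wring}. Your write-up simply makes explicit the verification that $(\mu_R,\ev)$ is a morphism in $\bimod$ and that $W(R)$-bilinearity is inherited from $(R;R)$-bilinearity of $\ev$ in $\bimod$, which is precisely what the paper leaves to the reader.
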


\begin{prop}\label{prop:formulas}
The maps $V_n$, $F_n$, the $C_n$-action and the lax symmetric-monoidal structure satisfy the following properties:
\begin{enumerate}
 \item $V_nV_m = V_{nm}$
 \item $F_nF_m = F_{nm}$
 \item $V_n: W(R; M^{\otimes n}) \to W(R; M)$ is invariant under the $C_n$-action on $W(R; M^{\otimes n})$.
 \item $F_n: W(R; M)\to W(R; M^{\otimes n})$ is invariant under the $C_n$-action on $W(R; M^{\otimes n})$.
 \item $F_nV_n: W(R;M^{\otimes n}) \to W(R; M^{\otimes n})$ is the transfer $\sum_{\sigma\in C_n} \sigma$.
 \item $F_n$ is a symmetric monoidal transformation.
 \item We have $V_n(x*F_n(y)) = V_n(x)*y$ for all  $x\in W(R; M^{\otimes n})$ and $y\in W(S; N)$.
\end{enumerate}
\end{prop}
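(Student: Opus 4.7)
The plan is to reduce all seven identities to computations on ghost components. Each assertion is an equality between natural homomorphisms of Witt groups, and the relevant functors $W(R;-)$ and $W(-;-)$ commute with reflexive coequalizers in Hausdorff topological groups by Proposition \ref{prop:coeq}. By Lemma \ref{restorfree} every bimodule admits a resolution by free bimodules for which all the groups $(M^{\circledcirc_R k})_{C_k}$ and $(M^{\circledcirc_R k})^{C_k}$ are torsion free, and in that case Proposition \ref{prop:ghost} tells us that $\tlog$ is a homeomorphism onto its image. Thus to prove the seven identities it suffices to verify them on ghost components in the free case. Statement (1) has already been established as part of Proposition \ref{prop:verschiebung}.

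For (2), Proposition \ref{prop:frobenius} describes $F_n$ on ghosts as the composite of projection onto the factors of index divisible by $n$ and inclusion $(M^{\circledcirc_R kn})^{C_{kn}} \hookrightarrow (M^{\circledcirc_R kn})^{C_k}$; iterating with $F_m$ gives the same projection and inclusion for $nm$, matching $F_{nm}$. Statement (3) is immediate: the ghost of $V_n(x)$ is a sum of transfers $\tr_{C_k}^{C_{nk}}$, which land in $C_{nk}$-fixed points and are therefore killed by the residual $C_n = C_{nk}/C_k$ action from Proposition \ref{prop:weilaction}. Statement (4) is analogous: the ghost image of $F_n$ lies in $(M^{\circledcirc_R kn})^{C_{kn}}$, which is a fortiori fixed by $C_n = C_{kn}/C_k$.

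Statement (5) reduces on ghost components to the standard identity
\[
\res_{C_k}^{C_{kn}} \circ \tr_{C_k}^{C_{kn}} \;=\; \sum_{\sigma \in C_{kn}/C_k} \sigma,
\]
i.e.\ the double-coset formula for transfer followed by restriction, where the sum on the right realises the residual $C_n$-action on $(M^{\circledcirc_R kn})^{C_k}$. For (6), the formula from Proposition \ref{prop:laxmonoidal} shows that $\star$ on ghost components is the degreewise shuffle pairing; one verifies that projection-onto-multiples-of-$n$ and inclusion into smaller fixed points both commute with shuffles, so $F_n(x\star y) = F_n(x)\star F_n(y)$ on ghosts, and the unit condition is trivial.

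Finally, (7) is the projection formula. On ghost components it becomes the identity $\tr_{C_k}^{C_{kn}}(a \cdot \res_{C_k}^{C_{kn}} b) = \tr_{C_k}^{C_{kn}}(a) \cdot b$, applied degreewise to the shuffle pairing, where $a$ comes from the ghost of $x \in W(R;M^{\otimes_R n})$ and $b$ from that of $y \in W(S;N)$; the restriction on $b$ arises precisely because $F_n$ on ghosts is inclusion into $(N^{\circledcirc_S kn})^{C_k}$. I expect the main obstacle to be the bookkeeping in (6) and (7): one must carefully match the indices $k$, $kn$ and the coprime decomposition $k = d\cdot(k/d)$ that enters both the definition of $F_n$ and the $\star$-formula, and then repeatedly apply the three core identities $\tr_K^G = \tr_H^G\tr_K^H$, the double-coset formula, and the transfer projection formula in the cyclic group setting.
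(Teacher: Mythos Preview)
Your proposal is correct and takes essentially the same approach as the paper: reduce to the case of injective $\tlog$ via free resolutions, then verify each identity on ghost components using elementary properties of transfer and restriction for cyclic groups. The paper's own proof is even terser than yours—it simply remarks that one can reduce to injective $\tlog$ and that the resulting statements on ghost components ``reduce to coordinate-wise application of elementary properties of the transfer maps,'' without spelling out the individual identities you identify for (2)--(7).
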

\begin{proof}
Some of these statements can be obtained immediately from the formulas characterizing those maps on elements of the form $\tau_k(a_k)$, but alternatively, we can always reduce them to the case of injective $\tlog$, where they follow from corresponding statements on ghost components (all of which reduce to coordinate-wise application of elementary properties of the transfer maps).
\end{proof}

\begin{rem}
An immediate consequence of the Frobenius reciprocity formula $V_n(F_n(y)*x) = y* V_n(x)$ is that for a module $M$ over a commutative ring $R$, the Frobenius and the Verschiebung operators are self-dual under the pairing of \ref{Wdual}, in the sense that
\[
\langle \phi,V_n(x)\rangle=V_n\langle F_n(\phi),x\rangle
\]
for all $\phi\in W(R;M^\vee)$ and $x\in W(R;M^{\otimes_R n})$, where the $V_n$ on the right is the Verschiebung of $W(R)$.
\end{rem}

\subsection{The trace property and Morita invariance}\label{sec:Morita}

We now show that $W(R;M)$ satisfies a certain trace invariance property.
The Weyl action constructed in Proposition \ref{prop:weilaction} admits a slight generalisation, where instead of considering the $n$-fold tensor power of a bimodule, we consider $n$ bimodules over possibly different rings.
Concretely, consider rings $R_i$, and $R_i$-$R_{i+1}$-bimodules $M_{i, i+1}$. Here $i$ ranges over the numbers $0\leq i \leq n-1$ modulo $n$, i.e. the last bimodule is an $R_{n-1}$-$R_0$-bimodule.
In this situation, we can form $R_l$-$R_l$-bimodules
\[
M_{l,l+1} \otimes_{R_{l+1}} M_{l+1,l+2}\otimes_{R_{l+2}} \cdots \otimes_{R_{l-1}} M_{l-1,l}.
\]

\begin{prop}[Trace property]
\label{prop:traceinvariance}
In the situation above, there is an isomorphism
\[
  T : W(R_0; M_{0,1}\otimes_{R_1} \ldots \otimes_{R_{n-1}} M_{n-1,0})\xrightarrow{\sim} W(R_{n-1}; M_{n-1,0}\otimes_{R_0} M_{0,1} \otimes_{R_1} \ldots \otimes_{R_{n-2}} M_{n-2,n-1})
\]
uniquely characterized by the commutative diagrams 
\[
\adjustbox{scale=0.85,center}{
\begin{tikzcd}[column sep=0.5cm] 
(M_{0,1} \times \ldots \times M_{n-1,0})^{\times k} \rar\dar{\mathrm{shift}} & (M_{0,1} \otimes_{R_1} \ldots \otimes_{R_{n-1}} M_{n-1,0})^{\times k} \rar{\tau_k}
& W(R_0; M_{0,1} \otimes_{R_1} \ldots \otimes_{R_{n-1}} M_{n-1,0})\dar{T}\\
(M_{n-1,0} \times \ldots \times M_{n-2,n-1})^{\times k} \rar & (M_{n-1,0} \otimes_{R_0} \ldots \otimes_{R_{n-2}} M_{n-2,n-1})^{\times k} \rar{\tau_k}
  & W(R_{n-1}; M_{n-1,0} \otimes_{R_0} \ldots \otimes_{R_{n-2}} M_{n-2,n-1})\\
\end{tikzcd}
}
\]
where the left vertical map is the cyclic permutation of order $nk$.
Under the ghost map, the isomorphism $T$ is compatible with the isomorphism
\[
  \prod_{k\geq 1} ((M_{0,1}\otimes_{R_1} \ldots \otimes_{R_{n-1}} M_{n-1,0})^{\circledcirc_{R_0} k})^{C_k} \to \prod_{k\geq 1} ((M_{n-1,0}\otimes_{R_0} \ldots \otimes_{R_{n-2}} M_{n-2,n-1})^{\circledcirc_{R_{n-1}} k})^{C_k}
\]
given on the $k$-th factor by the cyclic permutation of order $nk$.
The $n$-fold composition of $T$ defines an automorphism of
$
W(R_0; M_{0,1}\otimes_{R_1} \ldots \otimes_{R_{n-1}} M_{n-1,0})
$, which is the identity.
\end{prop}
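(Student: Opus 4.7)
The strategy mirrors the proof of Proposition \ref{prop:weilaction}: construct $T$ on ghost components in the torsion-free case, then extend via reflexive coequalizer resolutions. Uniqueness is immediate from Lemma \ref{lem:generators}, since the $\tau_k$ topologically generate the source. For existence, Lemma \ref{restorfree} (whose proof applies essentially verbatim to a cyclic tuple of free bimodules) together with Proposition \ref{prop:coeq} lets me resolve $(R_i; M_{i,i+1})$ by free data on which the relevant transfer maps are injective, so it suffices to construct $T$ naturally on the torsion-free subcategory.

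In the torsion-free case, $\tlog$ is a topological embedding by Proposition \ref{prop:ghost}. The key observation is that
\[
(M_{0,1}\otimes_{R_1}\ldots\otimes_{R_{n-1}} M_{n-1,0})^{\circledcirc_{R_0} k} \quad \text{and} \quad (M_{n-1,0}\otimes_{R_0}\ldots\otimes_{R_{n-2}} M_{n-2,n-1})^{\circledcirc_{R_{n-1}} k}
\]
agree canonically as quotients of the free $nk$-fold tensor product $M_{0,1}\otimes_{\Z}\ldots\otimes_{\Z}M_{n-1,0}\otimes_{\Z}M_{0,1}\otimes_{\Z}\ldots$ by the bond-balance relations at all $nk$ positions; the two descriptions differ only in which factor is labelled first, and both $C_k$-actions correspond to the shift by $n$ in the $nk$-cycle. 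Thus the cyclic permutation of order $nk$ is an isomorphism of abelian groups intertwining the $C_k$-actions, and so induces an isomorphism between the relevant ghost spaces. A short calculation with the explicit formula of Proposition \ref{prop:tlogspecialvalues}(2) then shows that this shift maps $\tlog\tau_k(m_1,\ldots,m_{nk})$ to $\tlog\tau_k(m_{nk},m_1,\ldots,m_{nk-1})$, so it restricts to the desired map $T$ on Witt vectors and is compatible with the $\tau_k$ in the stated commutative diagram.

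For the final claim that $T^n=\id$: the $n$-fold composition acts on $\tau_k(m_1,\ldots,m_{nk})$ as a shift of the $nk$-sequence by $n$ positions, which is precisely the cyclic permutation of the $k$ length-$n$ blocks by one position. By the $C_k$-invariance of $\tau_k$ established in Lemma \ref{lem:teichmullerinvariance}, the value of the generator is unchanged, so $T^n$ fixes every topological generator and hence equals the identity. The main obstacle is the canonical identification of the two cyclic tensor products via the $nk$-shift together with the verification of the $\tlog$-compatibility on generators; both are bookkeeping exercises of the same flavour as those in the proofs of Propositions \ref{prop:weilaction} and \ref{prop:frobenius}.
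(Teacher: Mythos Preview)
Your proposal is correct and follows essentially the same route as the paper: uniqueness from topological generation by the $\tau_k$, existence by reduction to the free/torsion-free case via reflexive coequalizer resolutions, and verification there by checking the ghost-level cyclic shift carries $\tlog\tau_k$ to $\tlog\tau_k$ of the shifted tuple. The only cosmetic difference is in the argument for $T^n=\id$: the paper observes that the $n$-fold iterate is the identity on ghost components (hence the identity by naturality), whereas you argue directly that $T^n$ fixes each $\tau_k$ via the $C_k$-invariance of Lemma~\ref{lem:teichmullerinvariance}; both are equally valid and amount to the same computation.
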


\begin{proof}
Just as in the proof of Proposition  \ref{prop:weilaction}, uniqueness follows immediately since the images of the $\tau_k$ form a system of generators. Existence is checked in the case of suitably free $R_i$, $M_{i,i+1}$, such that the $\tlog$ is injective, by computing that the claimed action on ghost components acts correctly on elements of the form $\tlog(\tau_k(a_k))$. Finally, the statement about the $n$-fold iterate of this isomorphism also follows by observing that the corresponding map on ghosts is the identity.
\end{proof}

For any ring $R$, we let $\operatorname{Proj}_R$ denote the category of finitely generated projective right $R$-modules.

\begin{cor}
\label{cor:morita}
Every additive functor $A: \operatorname{Proj}_R \to \operatorname{Proj}_S$ induces a map of abelian groups $A_* : W(R) \to W(S)$ extending the functoriality of $W$ in ring homomorphisms. In particular Morita equivalent rings $R$ and $S$ have isomorphic Witt vectors $W(R) \cong W(S)$. 
\end{cor}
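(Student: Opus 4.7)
The plan starts with an Eilenberg--Watts-type observation: an additive functor $A : \Proj_R \to \Proj_S$ is determined (up to natural isomorphism) by $M := A(R)$. This $M$ naturally carries an $R$-$S$-bimodule structure, with the left $R$-action obtained by applying $A$ to the right-$R$-linear self-maps of $R$ given by left multiplication, and $M$ is finitely generated projective as a right $S$-module since $A(R) \in \Proj_S$. Standard arguments then identify $A$ with the functor $-\otimes_R M$, and composition of additive functors corresponds to tensor product of bimodules over the intermediate ring.

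Given this dictionary, the map $A_*$ is constructed via the trace property of Proposition \ref{prop:traceinvariance} with $n=2$. Set $M^\vee := \Hom_S(M,S)$, which is naturally an $S$-$R$-bimodule. Finitely generated projectivity of $M_S$ provides the standard duality isomorphism $M \otimes_S M^\vee \xrightarrow{\cong} \End_S(M)$ of $R$-bimodules, together with the evaluation map $\ev : M^\vee \otimes_R M \to S$ of $S$-bimodules. Define $A_* : W(R) \to W(S)$ as the composition
\[
W(R;R) \xrightarrow{\ell_*} W\bigl(R;\, M \otimes_S M^\vee\bigr) \xrightarrow{\ T\ } W\bigl(S;\, M^\vee \otimes_R M\bigr) \xrightarrow{\ev_*} W(S;S),
\]
where $\ell : R \to \End_S(M)$ is left multiplication (a ring, hence $R$-bimodule, homomorphism) and $T$ is the trace-property isomorphism applied with $R_0 = R$, $R_1 = S$, $M_{0,1} = M$, $M_{1,0} = M^\vee$. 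As a sanity check, for $A$ the extension of scalars along a ring map $\varphi : R \to S$ (so $M = S$ with left $R$-action via $\varphi$), one verifies directly on Teichm\"uller generators using the explicit description of $T$ that this composite recovers $W(\varphi)$, confirming that $A \mapsto A_*$ extends the ring-homomorphism functoriality of $W$.

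To deduce Morita invariance, the essential extra input is functoriality in the additive functor: given a composable $B : \Proj_S \to \Proj_T$ corresponding to an $S$-$T$-bimodule $N$, one needs $(B \circ A)_* = B_* \circ A_*$. Granting functoriality, a Morita equivalence $A$ with quasi-inverse $B$ yields mutually inverse maps $A_*$ and $B_*$, proving $W(R) \cong W(S)$.

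The main obstacle is verifying this functoriality. Writing out $B_* \circ A_*$ produces a zig-zag that applies the two-cycle trace isomorphism twice, interleaved with $\ell$-type and $\ev$-type bimodule maps. Rewriting this, via the canonical identification $(M \otimes_S N)^\vee \cong N^\vee \otimes_S M^\vee$, as the single application of $T$ used to define $(B \circ A)_*$ requires an iterated trace identity hidden in Proposition \ref{prop:traceinvariance}: concretely, the four-cycle trace isomorphism associated to $(R, S, T, S)$ with bimodules $(M, N, N^\vee, M^\vee)$ should factor as the composition of two two-cycle trace isomorphisms interpolated by evaluation/coevaluation. I expect this to reduce, via the ghost-map embedding in the torsion-free case and then resolution by reflexive coequalizers (as used throughout \S\ref{sec:Morita}), to an elementary identity about cyclic permutations of tensor factors and compatibility of transfers along the lattice of cyclic subgroups.
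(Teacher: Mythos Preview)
Your construction of $A_*$ coincides with the paper's: both use the Eilenberg--Watts bimodule $M = A(R)$, its $S$-linear dual $M^\vee$, and the composite
\[
W(R;R) \xrightarrow{\eta_*} W(R; M\otimes_S M^\vee) \xrightarrow{T} W(S; M^\vee\otimes_R M) \xrightarrow{\ev_*} W(S;S).
\]
Where you diverge is in deducing Morita invariance. You aim to establish full functoriality $(B\circ A)_* = B_*\circ A_*$ and then apply it to a quasi-inverse pair; you correctly identify this as the main obstacle and sketch a reduction to ghost components. The paper sidesteps this entirely: when $R$ and $S$ are Morita equivalent one may choose $M$ so that both $\eta: R \to M\otimes_S M^\vee$ and $\ev: M^\vee\otimes_R M \to S$ are bimodule \emph{isomorphisms}, and then each of the three maps in the composite is an isomorphism on the nose. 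No functoriality check is required.

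Your route, if carried out, would yield the stronger conclusion that $A\mapsto A_*$ is a genuine functor from the Morita $2$-category to abelian groups, which the paper does not prove here. But for the corollary as stated it is considerably more work than necessary, and the compatibility of iterated trace isomorphisms you would need (the four-cycle versus two two-cycle factorisation) is not made explicit in Proposition~\ref{prop:traceinvariance} and would itself require an argument.
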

\begin{proof}
Any additive functor  $A : \operatorname{Proj}_R \to \operatorname{Proj}_S$ is of the form $A \cong (-) \otimes_R M$, where $M$ is the $R$-$S$-bimodule $M := A(R)$. Let $N$ be the $S$-$R$-bimodule $N := \Hom_S(M, S)$. There are bimodule maps
\[
\eta : R \to M \otimes_S N \qquad \text{and} \qquad \ev : N \otimes_R M \to S,
\]
where the second map is the evaluation, while the first map corresponds under the isomorphism 
\begin{equation}\label{iso_proj}
M \otimes_S N \cong M \otimes_S \Hom_S(M, S) \cong \Hom_S(M,M)
\end{equation}
to the map which sends $1 \in R$ to the identity. The isomorphism \eqref{iso_proj} uses the fact that $M$ is finitely generated projective over $S$. 
The desired map is defined as the composite
\[
W(R) = W(R;R) \stackrel{\eta_*}{\longrightarrow} W(R; M \otimes_S N) \cong W(S; N \otimes_R M) \stackrel{\ev_*}{\longrightarrow} W(S; S) = W(S),
\]
where the middle isomorphism is from Proposition \ref{prop:traceinvariance}.

If $R$ and $S$ are Morita equivalent we can find an $R$-$S$-bimodule $M$ such that $\eta$ and $\ev$ are isomorphisms, and it follows that the map above is also an isomorphism.
\end{proof}

\begin{rem}
In the $p$-typical case the Morita invariance of the Witt vectors has been shown by Hesselholt using a comparison to the topological invariant $\mathrm{TR}_0$. See \cite{HesselholtncW}, specifically (2.2.10) on page 130. He also mentions that ``One would like also to have an algebraic proof of this fact'' which is exactly what we have provided. It is remarkable that to prove this fact about Witt vectors of non-commutative rings one needs to introduce the more general notion of Witt vectors with coefficients. We consider this to be one of the main reasons to study this more general notion. 
\end{rem}

We finish this section by remarking that Corollary \ref{cor:morita} implies additional functoriality for the construction $R\mapsto W(R)$. 
\begin{enumerate}
\item
Every non-unital map of unital rings $f : R \to S$ gives rise to a functor  
\[
\operatorname{Proj}_R \to  \operatorname{Proj}_S \qquad P \mapsto P \otimes_R (f(1) \cdot S)
\]
 and thus to a map $W(R) \to W(S)$. One can of course see this directly, but Morita invariance gives a nice explanation for this additional functoriality.
\item
The functor $\oplus: \operatorname{Proj}_{R \times R} =  \operatorname{Proj}_R \times \operatorname{Proj}_R \to \operatorname{Proj}_R$ induces a map $W(R \times R) = W(R) \oplus W(R) \to W(R)$, which coincides with the group structure by an Eckmann-Hilton argument.
\item
For every map $R \to S$ such that $S$ is finitely generated projective over $R$ there is a `transfer' map $W(S) \to W(R)$ induced by the restriction functor 
$
\operatorname{Proj}_S \to  \operatorname{Proj}_R$. With some more work one can show that such a transfer map even exists if $S$ is a perfect complex over $R$.
\end{enumerate}

\subsection{Truncated Witt vectors with coefficients}\label{sec:truncated}

We recall that a subset $S\subseteq \N_{>0}$ is a \emph{truncation set} if it has the property that $ab\in S$ implies $a\in S$ and $b\in S$.

\begin{defn}
For a truncation set $S$ we define $W_S(R;M)$ to be the quotient of $W(R;M)$ by the closed subgroup generated by the elements $\tau_n(x)$ for all $n\not\in S$.
\end{defn}
For a prime $p$ the \emph{$p$-typical Witt vectors with coefficients} are defined as
\[
W_p(R;M) := W_{\{1,p,p^2,...\}}(R;M) 
\]
and for $n \geq 1$ the truncated version by
\[
W_{p,n}(R;M) := W_{\{1,p,p^2,\ldots ,p^{n-1}\}}(R;M) \ .
 \]
For every inclusion $S'\subseteq S$, we have a natural \emph{reduction map}
\[
R: W_S(R;M) \to W_{S'}(R;M).
\]
\begin{lemma}
For $S =\bigcup_i S_i$ an increasing union of truncation sets $\ldots \subseteq S_i \subseteq S_{i+1} \subseteq \ldots$, the map
\[
W_S(R;M) \to \varprojlim W_{S_i}(R;M)
\]
is an isomorphism.
\end{lemma}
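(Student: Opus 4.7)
The plan is to reduce both sides to inverse limits of filtration quotients and observe that the relevant inverse systems eventually stabilize. Write $N \subseteq W(R;M)$ for the closed subgroup generated by $\tau_n(x)$ with $n \notin S$, and $N_i \subseteq W(R;M)$ for the one generated by $\tau_n(x)$ with $n \notin S_i$; then $W_S(R;M) = W(R;M)/N$, $W_{S_i}(R;M) = W(R;M)/N_i$, and $N \subseteq N_i$. Let $V^{(k)}_S$ and $V^{(k)}_{S_i}$ denote the images of $W^{(k)}(R;M)$ in these quotients. Exactly as in Lemma \ref{lem:complete}, since $N$ and $N_i$ are closed in the complete Hausdorff group $W(R;M)$, these induced filtrations are themselves complete and Hausdorff, and so the canonical maps $W_S(R;M) \to \varprojlim_k W_S/V^{(k)}_S$ and $W_{S_i}(R;M) \to \varprojlim_k W_{S_i}/V^{(k)}_{S_i}$ are isomorphisms.

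The heart of the argument will be the claim that for each fixed $k$, the inverse system $\bigl(W_{S_i}(R;M)/V^{(k)}_{S_i}\bigr)_i$ is eventually constant with value $W_S(R;M)/V^{(k)}_S$; equivalently, $N_i + W^{(k)}(R;M) = N + W^{(k)}(R;M)$ for all sufficiently large $i$. The inclusion $\supseteq$ is immediate since $N \subseteq N_i$. For the converse, I will use that $S \cap \{1,\ldots,k-1\}$ is a finite subset of $S = \bigcup_j S_j$, so there is an $i_0$ with $S \cap \{1,\ldots,k-1\} \subseteq S_i$ for all $i \geq i_0$; for such $i$, every generator $\tau_n(x)$ of $N_i$ (with $n \notin S_i$) either satisfies $n \notin S$, and so lies in $N$, or has $n \geq k$, and so lies in $W^{(k)}(R;M)$. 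The subgroup $N + W^{(k)}(R;M)$ is \emph{open} in $W(R;M)$ because $W^{(k)}(R;M)$ is, hence it is a closed subgroup containing every generator of $N_i$, and therefore contains all of $N_i$.

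With this stabilization established, the rest of the proof is formal: using completeness on both sides and swapping the order of the two inverse limits,
\[
\varprojlim_i W_{S_i}(R;M) = \varprojlim_i \varprojlim_k W_{S_i}(R;M)/V^{(k)}_{S_i} = \varprojlim_k \varprojlim_i W_{S_i}(R;M)/V^{(k)}_{S_i} = \varprojlim_k W_S(R;M)/V^{(k)}_S = W_S(R;M),
\]
and this composite is precisely the canonical map in question. The main obstacle will be the openness-plus-closedness step of the second paragraph: it is what allows the \emph{closed} subgroup $N_i$ from the definition of the truncated Witt vectors to be compared filtration-by-filtration with $N$, and without it one has no direct control over how the closure in the definition of $N_i$ interacts with the filtration.
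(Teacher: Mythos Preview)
Your proof is correct and rests on the same key observation as the paper's: for each fixed filtration level $k$, once $i$ is large enough that $S_i \supseteq S \cap \{1,\ldots,k-1\}$, the extra generators distinguishing $N_i$ from $N$ all have filtration $\geq k$. The paper packages this slightly differently---it works inside $W_S(R;M)$ with the kernels $K_i = \ker(W_S \to W_{S_i})$, notes that $K_i$ is contained in filtration $\geq d_i := \min(S\setminus S_i)$, and concludes directly that the $K_i$ form a complete Hausdorff filtration equivalent to the standard one---whereas you phrase the same fact as eventual constancy of the finite filtration quotients and finish by swapping a double inverse limit; both routes are equally valid, and your explicit use of the openness (hence closedness) of $N + W^{(k)}(R;M)$ is a clean way to handle the topological closure in the definition of $N_i$.
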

\begin{proof}
Observe that the image filtration of the $\widehat{S}^{(n)}(R;M)$ on $W_S(R;M)$ is still Hausdorff and complete, by the same argument as in the proof of Lemma \ref{lem:complete}. 

The map $W_S(R;M) \to W_{S_i}(R;M)$ is surjective, with kernel $K_i$ topologically generated by the elements of the form $\tau_n(x)$ with $n\not\in S_i$, and since elements of the form $\tau_n(x)$ with $n\not\in S$ are already zero in $W_S(R;M)$, $K_i$ is actually generated by those $\tau_n(x)$ with $n\in S\setminus S_i$. 
We let $d_i$ be the minimal element of $S\setminus S_i$. So every element of $K_i$ has a representative of filtration $\geq d_i$. Since $\bigcup S_i = S$, $d_i$ tends to $\infty$ with $i$, and thus the $K_i$ also form a Hausdorff and complete filtration of $W_S(R;M)$, which implies the claim.
\end{proof}

For each truncation set $S$ we let $\pi_S$ be the projection map
$
\prod_{n\geq 1} (M^{\circledcirc_R n})^{C_n} \to \prod_{n\in S} (M^{\circledcirc_R n})^{C_n}.
$

\begin{lemma}\label{lemma:tlogS}
There exists a unique map $\tlog_S$ making the diagram
\[
\begin{tikzcd}
W(R;M) \rar{\tlog}\dar{R}& \prod_{n\geq 1} (M^{\circledcirc_R n})^{C_n}\dar{\pi_S}\\
W_S(R;M) \rar{\tlog_S} & \prod_{n\in S} (M^{\circledcirc_R n})^{C_n}
\end{tikzcd}
\]
commute. If the transfers $(M^{\circledcirc_R n})_{C_n}\to (M^{\circledcirc_R n})^{C_n}$ are injective for all $n\in S$, then $\tlog_S$ is also an embedding.
\end{lemma}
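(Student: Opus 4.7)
The plan is to treat the existence of $\tlog_S$ by checking that $\pi_S\circ\tlog$ vanishes on the kernel of the reduction $R$, uniqueness by surjectivity of $R$, and the embedding statement via a filtration-by-filtration argument patterned on Lemma~\ref{lem:tlogkernel} and the injectivity proof of Proposition~\ref{prop:ghost}. The main obstacle will be the inductive step for the embedding part, which requires a case split according to whether the lowest nonzero degree of an element lies in $S$.

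Since $R:W(R;M)\to W_S(R;M)$ is a continuous surjection, uniqueness of $\tlog_S$ is immediate. For existence, it suffices by continuity to check that $\pi_S\circ\tlog$ kills each topological generator $\tau_n(x)$ of $\ker R$ with $n\notin S$. Proposition~\ref{prop:tlogspecialvalues}(2) writes
\[
\tlog(\tau_n(x)) \;=\; \tr^{C_n}_e x\cdot t^n + \tr^{C_{2n}}_{C_2} x^{2}\cdot t^{2n} + \ldots,
\]
a sum supported in positive multiples of $n$. Because $S$ is a truncation set, $kn\in S$ would force $n\in S$, so when $n\notin S$ no multiple $kn$ lies in $S$ and $\pi_S$ annihilates this expression.

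For the embedding claim, assume the transfers $(M^{\circledcirc_R k})_{C_k}\to(M^{\circledcirc_R k})^{C_k}$ are injective for every $k\in S$. I will prove the stronger statement that any lift $\tilde f\in\widehat S(R;M)$ of an element of $W_S(R;M)$ with $\pi_S\tlog(\tilde f)$ of filtration $\geq n$ differs from an element of the closed normal subgroup $N'\subseteq\widehat S(R;M)$ generated by commutators, Teichm\"uller relations $(1+rmt)(1+mrt)^{-1}$, and $\tau_k(y)$ with $k\notin S$, by an element of $\widehat S^{(n)}(R;M)$. Letting $n\to\infty$ will then yield injectivity of $\tlog_S$, while for finite $n$ the statement says exactly that $\tlog_S$ identifies the filtration on $W_S(R;M)$ with the filtration pulled back from $\prod_{n\in S}(M^{\circledcirc_R n})^{C_n}$, which is the topological embedding claim.

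The hard part is the inductive step, which peels off the lowest nonzero degree $k<n$ of $\tilde f = 1+a_kt^k+\ldots$. If $k\notin S$, I decompose $a_k$ as a finite sum of elementary tensors in $M^{\otimes_R k}$ and multiply by the corresponding product of $\tau_k(g)^{\pm 1}$ factors, all of which lie in $N'$ by construction; this cancels the degree $k$ coefficient and leaves a remainder of filtration $>k$ still in $\ker(\pi_S\tlog)$ modulo $N'$. If $k\in S$, the $k$-th ghost component $-\tr^{C_k}_e a_k$ of $\tlog(\tilde f)$ vanishes by the filtration hypothesis, and the injective-transfer assumption at $k$ forces $a_k$ to die in $(M^{\circledcirc_R k})_{C_k}$; hence $a_k$ is a sum of differences $x_i\otimes y_j - y_j\otimes x_i$ with $i+j=k$, which by Lemma~\ref{lem:commutator} appear as leading terms of commutators in $\widehat S(R;M)\subseteq N'$, so those commutators can be used to cancel the degree $k$ part up to higher filtration. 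Iterating and using closedness of $N'$ completes the induction.
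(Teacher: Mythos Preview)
Your proof is correct and follows essentially the same route as the paper's: existence via the truncation-set property applied to the ghost coordinates of $\tau_n$, and the embedding statement via a filtration-by-filtration argument with the same case split on whether the lowest nonzero degree lies in $S$. One minor imprecision: in the $k\in S$ step, Lemma~\ref{lem:commutator} only yields leading terms $x_i\otimes y_j - y_j\otimes x_i$ with $i,j\geq 1$, so for the contributions $r\,m - m\,r$ with $r\in R$ (the case $i=0$, which is the \emph{only} case when $k=1$) you must instead use the Teichm\"uller-type relations $(1-r m\, t^k)(1-m r\, t^k)^{-1}$; these are in your $N'$ by Lemma~\ref{lem:otherrelations}, so the argument goes through unchanged once you cite that.
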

\begin{proof}
To check that $\tlog$ factors as claimed, it suffices to show that $\tlog(1-a_kt^k)$ for $k\not\in S$ is sent to $0$ under the projection map $\prod_{n\geq 1} (M^{\circledcirc_R n})^{C_n} \to \prod_{n\in S} (M^{\circledcirc_R n})^{C_n}$. Since
\[
\tlog(1-a_kt^k) = \sum_i \tr_{C_i}^{C_{ki}} a_k^i t^{ki},
\]
and $S$ contains no multiples of $ki$, this is clear.

Now assume that for each $n\in S$, the transfer $(M^{\circledcirc_R n})_{C_n}\to (M^{\circledcirc_R n})^{C_n}$ is injective. We want to show that $\tlog_S$ is injective. Let $x$ be an element in the kernel, say with a representative of the form $(1-a_kt^k + \ldots)$. If $k\not\in S$, we can factor this in the form $(1-a_kt^k) \cdot (1 - a_{k+1} t^{k+1} + \ldots)$, with the second factor still in the kernel of $\tlog_S$. If $k\in S$ on the other hand, then we have
\[
\tlog(1-a_kt^k + \ldots) = \tr_e^{C_k}a_kt^k + \ldots,
\]
  so $a_k$ lies in the kernel of the transfer $M^{\circledcirc_R k} \to (M^{\circledcirc_R k})^{C_k}$. By assumption this means that $a_k$ vanishes in $(M^{\circledcirc_R k})_{C_k}$. As in the proof of Lemma \ref{lem:tlogkernel}, this shows that we can multiply $(1-a_kt^k + \ldots)$ by a series with filtration $\geq k$ that vanishes in $W(R;M)$, in order to obtain a representative of the form $(1-a_{k+1}t^{k+1}+\ldots)$. This shows that any element which gets mapped by $\tlog_S$ to something of filtration $\geq k$ admits a representative of filtration $\geq k$. In particular, $\tlog_S$ is an embedding.equals a convergent product of elements trivial in $W_S(R;M)$, and thus vanishes.
\end{proof}

For a truncation set $S$, we define $S/n := \{k\in \N_{>0}\ |\ nk \in S\}$. This is again a truncation set.
\begin{prop}\label{prop:opS}
The Verschiebung and Frobenius maps descend to maps
\begin{align*}
&V_n: W_{S/n}(R;M^{\otimes_R n}) \to W_S(R;M),\\
&F_n: W_{S}(R;M) \to W_{S/n}(R;M^{\otimes_R n}),
\end{align*}
the Weyl action of $C_n$ on $W(R;M^{\otimes_R n})$ descends to a $C_n$ action on $W_S(R;M^{\otimes_R n})$, and the lax symmetric monoidal structure on $W(-;-)$ descends to one on $W_S(-;-)$. There are formulas for the ghost components of these maps analogous to the respective Propositions \ref{prop:verschiebung}, \ref{prop:frobenius}, \ref{prop:weilaction}, and \ref{prop:laxmonoidal}.
\end{prop}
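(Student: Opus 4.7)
The plan is to exhibit each of the four maps as descending through the quotient $W(R;M) \twoheadrightarrow W_S(R;M)$ and the corresponding one on the target. Since each of $V_n$, $F_n$, the Weyl action $\sigma$, and the external product $*$ was constructed as a continuous homomorphism characterized by its values on the Teichm\"uller generators $\tau_k(a)$, and the kernel of $W(R;M) \twoheadrightarrow W_S(R;M)$ is by definition the closed subgroup topologically generated by the $\tau_k(x)$ with $k \notin S$, I need only evaluate each operator on such a generator and check that the image lies in the analogous kernel on the target. The whole argument reduces to a bookkeeping exercise about divisibility in truncation sets.

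Concretely, the formulas of Propositions \ref{prop:verschiebung}, \ref{prop:frobenius}, \ref{prop:weilaction}, \ref{prop:laxmonoidal} put $V_n(\tau_k(a))$ in filtration $nk$, express $F_n(\tau_k(a))$ as a sum of elements of the form $\tau_{k/d}(\cdots)$ with $d = \gcd(n,k)$, send $\sigma(\tau_k(\cdots))$ to an element of filtration $k$ again, and expand $\tau_k(a) * \tau_l(b)$ as a sum of $\tau_{kl/d}(\cdots)$'s with $d = \gcd(k,l)$. I would then check each case against the truncation axiom: for $V_n$, $k \notin S/n$ is by definition $nk \notin S$; for $F_n$, $k \notin S$ implies $nk/d \notin S$ since $k$ divides $nk/d$, hence $k/d \notin S/n$; the Weyl action trivially preserves the filtration index; and for the external product, $k \notin S$ forces $kl/d \notin S$ since $k$ divides $kl/d$, and symmetrically in $l$. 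In every case the image of a generator of the kernel defining $W_S$ (or $W_{S/n}$) lands in the corresponding kernel on the target, and continuity then gives the desired descent.

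For the ghost components one combines the preceding with Lemma \ref{lemma:tlogS}, which already supplies the truncated ghost map $\tlog_S$. It then suffices to observe that the maps on $\prod_{n\geq 1} (M^{\circledcirc_R n})^{C_n}$ described in Propositions \ref{prop:verschiebung}--\ref{prop:laxmonoidal} restrict along the projections $\pi_S$ to maps between the corresponding subproducts indexed by $S$ or $S/n$, which is once more immediate from the same divisibility statements used above. The main obstacle, such as it is, lies in keeping careful track of which truncation set labels which side in each of the four cases ($V_n$ and $F_n$ involving $S/n$ on exactly one side, while $\sigma$ and $*$ use $S$ on both sides); beyond this no conceptual issue arises.
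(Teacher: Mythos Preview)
Your argument is correct and more direct than the paper's. The paper proceeds via ghost components: it observes that the maps on $\prod_{n\geq 1}(M^{\circledcirc_R n})^{C_n}$ intertwine the projections onto the truncated products, then notes that when $\tlog_S$ is injective the kernel of $W(R;M)\to W_S(R;M)$ coincides with the $\tlog$-preimage of the kernel of $\pi_S$, so the structure maps descend in that case; the general case is then obtained by resolving $(R;M)$ by free bimodules. Your approach instead works entirely inside $W$, checking explicitly via the formulas on $\tau_k$ that each operator carries the generating relations of one truncated quotient into those of the other. This avoids both the ghost-injectivity argument and the reflexive-coequalizer reduction, at the cost of the small divisibility bookkeeping you outline (which is correct: the key point throughout is that $k\notin S$ forces every multiple of $k$ out of $S$). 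One minor wrinkle worth making explicit is that for $V_n$ the Teichm\"uller generators of the kernel in $W(R;M^{\otimes_R n})$ take inputs in $(M^{\otimes_R n})^{\times k}$ rather than $M^{\times nk}$, so $V_n\tau_k(a)$ is an element $(1-at^{nk})$ with $a\in M^{\otimes_R nk}$ not necessarily an elementary tensor; but as you implicitly use, such elements still lie in the closed subgroup generated by the $\tau_m(y)$ with $m\notin S$, since all filtrations $\geq nk$ appearing are multiples of $nk$ and hence outside $S$.
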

\begin{proof}
The formulas given on ghost components for the various structure maps are all seen to be compatible with the projections onto the respective index sets. Now note that if $\tlog_S$ is injective, the kernel of $W(R;M) \to W_S(R;M)$ is the same as the preimage of the kernel of the projection map $\prod_{n\geq 1} (M^{\circledcirc_R n})^{C_n} \to \prod_{n\in S} (M^{\circledcirc_R n})^{C_n}$ under $\tlog$. So in the injective case, we see that the structure maps preserve these kernels and thus descend to structure maps on $W_S$. The statement for general pairs $(R;M)$ now follows by resolving by pairs where the relevant $\tlog$ are injective.
\end{proof}

 The following exact sequences are analogous to the sequences  of \cite[Lemma 3.2]{KaledinWpoly} for vector spaces over perfect fields of characteristic $p$.

\begin{prop}\label{sesRn}
Let $M$ be an $R$-bimodule, $S$ a truncation set and $k\geq 1$. We let $S'= S\setminus k\N$. Then there is a natural exact sequence
\[
W_{S/k}(R;M^{\otimes_R k})_{C_k}\stackrel{V_k}{\longrightarrow} W_{S}(R;M)\stackrel{R}{\longrightarrow} W_{S'}(R;M)\to 0.
\]
\end{prop}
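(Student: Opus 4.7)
The strategy is to verify exactness in three parts. Surjectivity of $R$ is immediate from the definitions, since $W_{S'}(R;M)$ is obtained from $W_S(R;M)$ by further quotienting by the closed subgroup generated by $\tau_n(x)$ for $n \in S \cap k\N$. That $V_k$ lands in $\ker R$ follows from Proposition \ref{prop:verschiebung}, which sends each topological generator $\tau_m(a)$ of $W_{S/k}(R;M^{\otimes_R k})$ to a $\tau_{km}$-type element; such elements vanish in $W_{S'}$ since $km \in k\N$, and continuity together with density of the $\tau_m$ extend this to all of $W_{S/k}$. The factorisation of $V_k$ through the $C_k$-coinvariants is Proposition \ref{prop:formulas}(3).

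The main content is the reverse inclusion $\ker R \subseteq \operatorname{im} V_k$. Given $f \in \ker R$, I would build $g \in W_{S/k}(R;M^{\otimes_R k})$ with $V_k(g) = f$ by successive approximation along the complete Hausdorff filtration $\{W_S^{(N)}\}$ of Lemma \ref{lem:complete}. More precisely, I would construct partial preimages $g_N \in W_{S/k}(R;M^{\otimes_R k})$ with $f - V_k(g_N) \in W_S^{(N+1)}(R;M)$ and with the filtration of $g_{N+1}-g_N$ tending to infinity; the sum $g := \lim_N g_N$ then converges by completeness, and $V_k(g) = f$ by continuity. The inductive step uses the observation that at the $\widehat S$-level $V_k$ acts by $a_m t^m \mapsto a_m t^{km}$, so on associated gradeds $V_k\colon W_{S/k}^{(m)}/W_{S/k}^{(m+1)} \to W_S^{(km)}/W_S^{(km+1)}$ is induced from the identity of $M^{\otimes_R km}$. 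Whenever the residual $f - V_k(g_N)$ has filtration exactly $km$ with $km\in S$, lift its class in $W_S^{(km)}/W_S^{(km+1)}$ to $\tilde a \in M^{\otimes_R km}$ and add $\tau_m(\tilde a)$ to $g_N$; this cancels the leading term and raises the filtration.

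The essential input making this induction work is the filtration estimate
\[
\ker R \cap W_S^{(N)}(R;M) \subseteq W_S^{(N+1)}(R;M) \qquad \text{for every } N \notin k\N,
\]
guaranteeing that residual leading terms only appear at degrees in $k\N$, where $V_k$-preimages are available. This is the main obstacle. I would establish it by reducing, via the resolution technique of Lemma \ref{restorfree} and Proposition \ref{prop:coeq}, to the case where all transfers $(M^{\circledcirc_R n})_{C_n}\to (M^{\circledcirc_R n})^{C_n}$ are injective, so that $\tlog_S$ is a homeomorphism onto its image by Lemma \ref{lemma:tlogS}. Since $\tlog_S$ intertwines $R$ with the projection $\pi_{S'}$, the $\tlog_S$-image of any $\ker R$-element is supported at indices in $S\cap k\N$; an element of filtration $\geq N$ with $N\notin k\N$ therefore has $\tlog_S$-image supported at indices $\geq k\lceil N/k\rceil > N$, forcing it into $W_S^{(N+1)}$ as required.
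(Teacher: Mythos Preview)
Your argument is considerably more elaborate than the paper's. The paper proves the proposition in essentially two lines: by definition, $W_{S'}(R;M)$ is the quotient of $W_S(R;M)$ by the (image of the) closed subgroup generated by the Teichm\"uller elements $\tau_d(a)$ with $d\notin S'$, which in $W_S$ reduces to the $\tau_d(a)$ with $d\in S\cap k\N$; on the other hand $W_{S/k}(R;M^{\otimes_R k})$ is topologically generated by the $\tau_l(a)$ with $l\in S/k$ (Lemma~\ref{lem:generators}), and $V_k(\tau_l(a))=\tau_{kl}(a)$. Thus $\ker R$ and $\operatorname{im} V_k$ are identified directly as the same closed subgroup of $W_S(R;M)$. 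No successive approximation, no filtration estimate, and no passage to a free resolution appears.

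Your reduction step for the filtration estimate also has a genuine gap. The resolution technique of Lemma~\ref{restorfree} and Proposition~\ref{prop:coeq} is used throughout the paper to extend \emph{natural transformations} uniquely from free bimodules to all bimodules, exploiting that source and target commute with reflexive coequalizers in Hausdorff groups. Your filtration estimate $\ker R \cap W_S^{(N)}(R;M)\subseteq W_S^{(N+1)}(R;M)$ is not of that shape. Concretely, to pull it back you would lift an element $f\in \ker R \cap W_S^{(N)}(R;M)$ to some $\tilde f\in W_S^{(N)}(R_0;M_0)$ along a free resolution $(R_0;M_0)\to(R;M)$; but $\tilde f$ has no reason to lie in $\ker R$ upstairs --- the element $R(\tilde f)\in W_{S'}(R_0;M_0)$ only lies in the kernel of the surjection $W_{S'}(R_0;M_0)\to W_{S'}(R;M)$, which is typically nonzero. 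Hence the free-case estimate (valid, as you say, via $\tlog_S$) tells you nothing about $\tilde f$, and the reduction does not go through as written.
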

\begin{proof}
Recall that $W_{S'}(R;M)$ is, by definition, the quotient of $W(R;M)$ by the closed subgroup generated by all $\tau_d(a_d)$ for $a_d \in M^{\times d}$ and $d\not\in S'$, i.e. $d=kl$. Equivalently, we can view this as the quotient of $W_S(R;M)$ by the image of that subgroup. We have to check that this coincides with the image of $V_k$. To see this, recall (Lemma \ref{lem:generators}) that $W_{S/k}(R;M^{\otimes_R k})$ is generated by elements of the form $\tau_l(a_{kl})$ with $a_{kl} \in M^{\times kl}$, and $l\in S/k$, or equivalently, $kl\in S$. Now observe that
\[
V_k(\tau_l(a_{kl})) = \tau_{kl}(a_{kl}),
\]
which proves the claim. 
\end{proof}

The  Verschiebung  is generally not injective. This is the case even for the usual noncommutative Witt vectors, that is when $M=R$, by \cite{HesselholtncWcorr}. The usual Witt vector Verschiebung is however injective if the ring has no torsion or if it is commutative.

\begin{prop}\label{Vinj}
The Verschiebung $V_k : W_{S/k}(R;M^{\otimes_R k})_{C_k}\to W_{S}(R;M)$ is injective when the transfers $(M^{\circledcirc_R n})_{C_n} \to (M^{\circledcirc_R n})^{C_n}$ are injective for every $n\in S$ with $k\mid n$. This is satisfied in particular if $(M^{\circledcirc_R n})_{C_n}$ has no $n$-torsion for each such $n$.
\end{prop}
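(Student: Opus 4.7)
My plan is to adapt the filtration-induction technique of Lemma \ref{lem:tlogkernel} through the ghost map. The crucial observation is that $V_k$ is $C_k$-invariant (Proposition \ref{prop:formulas}(3)), so for any $x \in W_{S/k}(R;M^{\otimes_R k})_{C_k}$ with $V_k(x)=0$ and any lift $\tilde x \in W_{S/k}(R;M^{\otimes_R k})$, we still have $V_k(\tilde x)=0$ in $W_S(R;M)$. I aim to successively modify $\tilde x$ by Weyl-coboundaries of the form $(1-\sigma^r)\tau_m(\tilde b)$, which vanish both in the $C_k$-coinvariants and under $V_k$, in order to push $\tilde x$ to arbitrarily high filtration; together with the Hausdorffness of the filtration on $W_{S/k}$ (Lemma \ref{lem:complete}), this will force $x=0$.

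For the key step, assume $\tilde x$ has a leading term $\tau_m(a_m)$ at filtration $m \in S/k$ with $a_m \in (M^{\otimes_R k})^{\otimes_R m} = M^{\otimes_R km}$. By Proposition \ref{prop:verschiebung} the $km$-th ghost component of $V_k(\tilde x)$ is $\tr^{C_{km}}_{C_m^{[k]}}\tr^{C_m^{[k]}}_e(a_m) = \tr^{C_{km}}_e(a_m)$, which must vanish in $(M^{\circledcirc_R km})^{C_{km}}$; since $km \in S$ and $k\mid km$, the hypothesis applies and forces $a_m$ to vanish in $(M^{\circledcirc_R km})_{C_{km}}$. Writing $C_m^{[k]} := \langle \sigma^k\rangle \subset C_{km}$ with quotient $C_k$ (the Weyl group of Proposition \ref{prop:weilaction}), the canonical surjection
\[
((M^{\otimes_R k})^{\circledcirc_R m})_{C_m} \;\cong\; (M^{\circledcirc_R km})_{C_m^{[k]}} \twoheadrightarrow (M^{\circledcirc_R km})_{C_{km}}
\]
has kernel generated by differences $b - \sigma^r b$ as $\sigma^r$ runs through coset representatives of $C_m^{[k]}$ in $C_{km}$. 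Lifting back to $(M^{\otimes_R k})^{\otimes_R m}$ one obtains tensors $\tilde b_i$ and indices $0\leq r_i < k$ such that $a_m = \sum_i(\tilde b_i - \sigma^{r_i}\tilde b_i)$ modulo elements of $[R,\cdot]$ and $C_m^{[k]}$-differences, all of which are absorbed by $\tau_m$ thanks to Lemma \ref{lem:teichmullerinvariance}.

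Combining additivity of $\tau_m$ modulo filtration $\geq m+1$ with the compatibility $\sigma^{r_i}\cdot\tau_m(\tilde b_i) = \tau_m(\sigma^{r_i} \tilde b_i)$ between shifts in $M^{\times km}$ and the Weyl $C_k$-action (Proposition \ref{prop:weilaction}), one obtains
\[
\tau_m(a_m) \;=\; \sum_i(1-\sigma^{r_i})\tau_m(\tilde b_i) \;+\; w
\]
with $w$ of filtration $\geq m+1$, where now $\sigma^{r_i}$ denotes the Weyl action. Setting $\tilde x_{\mathrm{new}} := \tilde x - \sum_i(1-\sigma^{r_i})\tau_m(\tilde b_i)$ produces a new lift of $x$ of filtration $\geq m+1$ with $V_k(\tilde x_{\mathrm{new}})=0$, since $V_k$ annihilates each $(1-\sigma^{r_i})$-coboundary by its $C_k$-invariance. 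Iterating yields lifts of $x$ of arbitrarily high filtration, which converge to $0$ in $W_{S/k}(R;M^{\otimes_R k})$; continuity of the projection to the $C_k$-coinvariants then forces $x=0$. The main subtlety I expect to navigate is the bookkeeping of matching cyclic shifts by arbitrary elements of $C_{km}$ on $M^{\otimes_R km}$ with the Weyl $C_k$-action on $W_{S/k}(R;M^{\otimes_R k})$, which only has order $k$: this uses that on Teichmüller generators $\tau_m$ collapses any $C_{km}$-shift down to its image in the residual quotient $C_{km}/C_m^{[k]} = C_k$ modulo higher filtration.
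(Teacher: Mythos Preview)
Your proposal is correct and follows essentially the same argument as the paper's proof. Both proceed by showing that a lift $\tilde x$ of $x$ with $V_k(\tilde x)=0$ has a leading coefficient $a_m$ that vanishes in $(M^{\circledcirc_R km})_{C_{km}}$ via the ghost map and the transfer-injectivity hypothesis, and then absorb this leading term into Weyl $C_k$-coboundaries to push $\tilde x$ to higher filtration. The only cosmetic difference is that the paper phrases the induction as a contradiction against a maximal filtration degree and identifies the coboundaries concretely as $(1+x_iy_jt^l)(1+y_jx_it^l)^{-1}=\tau_l(x_iy_j)-\sigma^j\tau_l(x_iy_j)$, whereas you organise the same fact via the extension $1\to C_m^{[k]}\to C_{km}\to C_k\to 1$ and iterate directly.
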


\begin{proof} 
Assume $x\in W_{S/k}(R;M^{\otimes_R k})_{C_k}$ is in the kernel. Assume $x$ is not $0$, so there exists a maximal $l$ such that $x$ has filtration $\geq l$. We write $x = \tau_{l}(a_{kl}) + x'$ with $x'$ of filtration $\geq l+1$. If $kl\not\in S$, then $\tau_{l}(a_{kl}) = 0$ in $W_{S/k}$, and so $x=x'$ and $x$ has filtration $\geq l+1$, contradicting the maximality of $l$. So $kl \in S$.

  The leading term of $\tlog V_k(x)$ agrees with the one of $\tlog V_k(\tau_{l}(a_{kl})) = \tlog \tau_{kl}(a_{kl})$, which is given by $\tr_{e}^{C_{kl}}(a_{kl})$. Since $kl\in S$, the vanishing of $V_k(x)$ therefore implies that $\tr_{e}^{C_{kl}}(a_{kl})=0$. Since we assumed that the transfers $\tr: (M^{\circledcirc_R kl})_{C_{kl}} \to (M^{\circledcirc_R kl})^{C_{kl}}$ are injective, this implies that $a_{kl} = 0$ in $(M^{\circledcirc_R kl})_{C_{kl}}$. Similarly to the proof of Lemma \ref{lem:tlogkernel}, one can then write $(1+a_{kl}t^{l})$ in $\widehat{S}(R; M^{\otimes_R k})$ as a product of elements of the form $(1+x_i y_j t^l)\cdot(1+y_j x_i t^l)^{-1}$ with $x_i\in M^{\otimes_R i}$, $y_j\in M^{\otimes_R j}$, $i+j=kl$, and a remainder term of higher filtration. Observe that, from the definition of the $C_k$ action on $W(R; M^{\otimes k})$, the element $(1+x_i y_j t^l)(1+y_j x_i t^l)^{-1}$ represents $\tau_l(x_i y_j) - \sigma^j \tau_l(x_i y_j)$. It follows that $x\in W_{S/k}(R;M^{\otimes_R k})_{C_k}$ has filtration bigger than $l$, contradicting the maximality of $l$. Thus, $x=0$.
\end{proof}

\section{Characteristic Polynomials and cyclic $K$-theory}

In this section we define the characteristic polynomial for endomorphisms of finitely generated projective modules over non-commutative rings and compare it to Ranicki's and Sheiham's version of the Dieudonn\'e determinant, see \cite{Sheiham}. We will also discuss the group of rational Witt vectors and versions of the characteristic elements valued in this group.

\subsection{Characteristic polynomials for non-commutative rings} \label{Section: char polynomial}

We recall that for any ring $R$, not necessarily commutative, and any finitely generated projective right $R$-module $P$, the Hattori-Stallings trace is the additive map
\begin{equation}\label{map1}
\tr_R :  \End_R(P) \stackrel{\cong}{\longleftarrow} P\otimes_R P^\vee \xrightarrow{\ev}  R/[R,R] . 
\end{equation}
Here $P^\vee = \Hom_R(P, R)$, and the evaluation $P\otimes_R P^\vee \xrightarrow{\ev}  R/[R,R]$ is induced from the evaluation $P\otimes_{\mathbb{Z}} P^\vee \xrightarrow{\ev}  R$. It is only well-defined in the quotient $R/[R,R]$ by the additive subgroup $[R,R]\subseteq R$ generated by the commutators. The trace satisfies $\tr(AB) = \tr(BA)$ and thus descends to a map
$
\End_R(P)/[\End_R(P), \End_R(P)]  \longrightarrow R/[R,R] 
$
of abelian groups. The goal of this section is to give a (non-additive) refinement of the map \eqref{map1} through the first ghost component map $W(R) \to R/ [R,R]$, i.e. a map 
\[
\chi:  \End_R(P) \to W(R) \ .
\]
We first need an auxiliary construction. For every finitely generated projective $R$-module $P$ there is a  fully faithful functor
\begin{equation}\label{morita_functor}
(-)\otimes_{\End_R(P)}P:   \operatorname{Proj}_{\End_R(P)}\to \operatorname{Proj}_{R}
\end{equation}
and this induces by Corollary \ref{cor:morita} an additive map
\begin{equation}\label{map_end}
W(\End_R(P)) \longrightarrow W(R) \ .
\end{equation}
This map is not an isomorphism in general, but it is if the functor \eqref{morita_functor} is an equivalence of categories. By Morita theory this is the case if $P$ is free. We want to give a more general criterion for when this is the case.
\begin{defn}\label{defn_supported}
Let $R$ be a ring (not necessarily commutative) and $P$ a finitely generated, projective $R$-module. We say that $P$ is \emph{supported everywhere} if the functor $(-)\otimes_{\End_R(P)}P:   \operatorname{Proj}_{\End_R(P)}\to \operatorname{Proj}_{R}$ is an equivalence of categories.
\end{defn}

\begin{lemma}\label{lemma_support}
The module $P$ is supported everywhere precisely if the canonical evaluation map
\[
P^\vee \otimes_{\End_R(P)} P \to R
\]
is an isomorphism. If $R$ is commutative then this is also equivalent to the condition that $P$ has positive rank at every point of $\operatorname{Spec}(R)$. 

In general a sufficient condition for $P$ to have support everywhere is that $P$ contains $R$ as a summand. 
 \end{lemma}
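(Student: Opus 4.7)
\emph{Part (i).} I would argue by a direct Morita adjunction argument. Setting $S = \End_R(P)$, the module $P$ becomes an $S$-$R$-bimodule and $P^\vee$ an $R$-$S$-bimodule. The functor $F = (-) \otimes_S P$ from $\Proj_S$ to $\Proj_R$ has right adjoint $\Hom_R(P,-)$, which is naturally isomorphic to $(-) \otimes_R P^\vee$ because $P$ is finitely generated projective over $R$. For the same reason, the coevaluation $S \to P \otimes_R P^\vee$, which is the unit of this adjunction at $S$, is already an isomorphism. Since unit and counit are natural transformations between additive endofunctors of $\Proj_S$ respectively $\Proj_R$, being an isomorphism at the generator $S$ (respectively $R$) forces the same on all finitely generated projectives, by the usual argument via direct sums and retracts. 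Hence $F$ is an equivalence of categories precisely when the counit at $R$, i.e.\ the evaluation map $P^\vee \otimes_S P \to R$, is an isomorphism.

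\emph{Part (ii).} For $R$ commutative I would check the isomorphism condition on $\ev$ locally on $\operatorname{Spec}(R)$. At a prime $\mathfrak{p}$ the module $P_{\mathfrak{p}}$ is free of some finite rank $n$, so $\End_R(P)_{\mathfrak{p}} \cong M_n(R_{\mathfrak{p}})$ and the localisation of $\ev$ is identified with the standard map $(R_{\mathfrak{p}})^n \otimes_{M_n(R_{\mathfrak{p}})} (R_{\mathfrak{p}})^n \to R_{\mathfrak{p}}$. For $n \geq 1$ this is part of the classical Morita equivalence and is an isomorphism, while for $n = 0$ the source is zero and the target is nonzero. Thus $\ev$ is an isomorphism if and only if $P$ has positive rank at every point of $\operatorname{Spec}(R)$.

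\emph{Part (iii).} Suppose $P \cong R \oplus P'$ and let $\iota \in P$ correspond to $1 \in R$ under the summand inclusion, and $\pi \in P^\vee$ be the projection to $R$. Then $\ev(\pi \otimes \iota) = \pi(\iota) = 1$, so $\ev$ is surjective. To upgrade surjectivity to an isomorphism, the plan is to invoke the classical Morita context argument: since $P$ is finitely generated projective, the coevaluation $\mathrm{coev}\colon S \to P \otimes_R P^\vee$ is already an isomorphism, and in any bimodule Morita context where one of the two structure maps is an isomorphism, surjectivity of the other forces it to be an isomorphism as well. The main technical point, which I expect to be the only real calculation, is to verify this by applying $(P^\vee \otimes_S P) \otimes_R (-)$ to the short exact sequence $0 \to \ker\ev \to P^\vee \otimes_S P \to R \to 0$ of $R$-bimodules and comparing the resulting map with the isomorphism $(P^\vee \otimes_S P) \otimes_R (P^\vee \otimes_S P) \to P^\vee \otimes_S P$ obtained by contracting the middle two factors via $\mathrm{coev}^{-1}$; once one checks that these two natural maps agree on elementary tensors, $\ker\ev = 0$ drops out by right exactness.
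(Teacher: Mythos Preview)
Your Parts (i) and (ii) are essentially the paper's argument: the adjunction with unit already invertible, reducing the equivalence to the counit at $R$, and the Zariski-local check in the commutative case.

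Part (iii) is where you diverge. The paper dispatches this in one line by invoking the Morita generator criterion: since $P$ is already finitely generated projective, the functor $(-)\otimes_{\End_R(P)} P$ is an equivalence iff $P$ generates $\Proj_R$, which is immediate once $R$ is a summand of $P$. Your route is instead a direct Morita-context calculation showing that surjectivity of $\ev$ forces injectivity. This is more self-contained (you do not quote the progenerator theorem) at the cost of a short computation. Both are perfectly valid; yours is essentially reproving, in this specific instance, the standard fact that a surjective structure map in a Morita context is automatically bijective.

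One small point in your sketch of (iii): from the identification of $\id\otimes\ev$ with the middle contraction you only get that $A\otimes_R K \to A\otimes_R A$ is the zero map (where $A=P^\vee\otimes_S P$, $K=\ker\ev$); right exactness alone does not kill $K$. You need one more step: either observe that the same contraction also equals $\ev\otimes\id$, so that for any $u\in A$ with $\ev(u)=1$ and any $a\in A$ one has $a=\ev(u)\cdot a = u\cdot\ev(a)$, whence $a\in K$ forces $a=0$; or equivalently apply $\ev\otimes\id_A$ to the vanishing element $u\otimes k\in A\otimes_R A$ to get $k=0$. With that tweak your argument is complete.
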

\begin{proof}
The functor $(-)\otimes_{\End_R(P)}P:   \operatorname{Proj}_{\End_R(P)}\to \operatorname{Proj}_{R}$ has a right adjoint given by $(-)\otimes_R P^\vee$ and the counit of the adjunction is given by the map $M \otimes_R P^\vee \otimes_{\End_R(P)} P \to M$ which implies the criterion. For the second condition we note that we can check the first condition Zariski-locally, and for free modules it is equivalent to being non-trivial. 

Finally, to see that $P$ is supported everywhere, we have to show that $P$ is a generator of $\operatorname{Proj}_{R}$, which is immediate if $P$ contains a free summand. \end{proof}
\begin{prop}
\label{prop:moritaiso}
\leavevmode
\begin{enumerate}
\item\label{prop:item1}The map \eqref{map_end} is compatible with the product of traces on ghosts 
\[
\prod_{n\geq 1}\tr_R:  \prod_{n\geq 1} \End_R(P)/[\End_R(P), \End_R(P)] \longrightarrow \prod_{n\geq 1} R/[R,R].
\]
\item
The map \eqref{map_end} is compatible with direct sums in the sense that the maps  
\[
 W(\End_R P \times \End_R Q) \to W(\End_R(P \oplus Q)) \to W(R)
 \]
and 
\[
W(\End_R P \times \End_R Q)  \to W(\End_R P) \oplus W(\End_R Q) \to W(R) \oplus W(R) \xto{+} W(R)
\]
agree.
\item 
For a map of the form $\varphi: P \xto{\psi} R \xto{x} P$ the element $\psi(x) \in R$ is a representative of the class $\tr(\varphi) \in R/[R,R]$ and the map \eqref{map_end} 
sends $1 - \varphi t^n$ to 
$1 - \psi(x) t^n$.
\end{enumerate}
\end{prop}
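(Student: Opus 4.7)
My plan is to unwind the composite description of \eqref{map_end} given in the proof of Corollary \ref{cor:morita}, namely
\[
W(\End_R(P)) \xrightarrow{\eta_*} W(\End_R(P); P\otimes_R P^\vee) \xrightarrow{T} W(R; P^\vee \otimes_{\End_R(P)} P) \xrightarrow{\ev_*} W(R),
\]
where $\eta\colon \End_R(P)\xrightarrow{\cong} P \otimes_R P^\vee$ is the canonical bimodule isomorphism, $T$ is the trace-property isomorphism of Proposition \ref{prop:traceinvariance} with $n=2$, and $\ev\colon P^\vee \otimes_{\End_R(P)} P \to R$ is the evaluation pairing. Each of the three factors has an explicit description both on ghost components (via the functoriality of $\tlog$ and Proposition \ref{prop:traceinvariance}) and on the preferred generators $\tau_k$, and all three claims should fall out from direct calculations in one of these two presentations.

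For part (1), I will track each factor on the $k$-th ghost component: $\eta_*$ and $\ev_*$ act by the $k$-fold tensor power of their bimodule maps, while $T$ acts by the cyclic permutation of order $2k$ on $(P\otimes_R P^\vee)^{\circledcirc_{\End_R(P)} k}$. Composing on a simple tensor $(p_1\otimes\psi_1)\otimes\cdots\otimes(p_k\otimes\psi_k) \in \End_R(P)^{\circledcirc k}$ yields $\psi_k(p_1)\psi_1(p_2)\cdots\psi_{k-1}(p_k) \in R/[R,R]$, which a brief expansion identifies with $\tr_R\bigl((p_1\otimes\psi_1)\circ\cdots\circ(p_k\otimes\psi_k)\bigr)$. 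This is exactly the claimed componentwise Hattori--Stallings trace.

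For part (2), both composites agree on ghost components by part (1) and additivity of the Hattori--Stallings trace on block-diagonal endomorphisms. Since the ghost map is an embedding for bimodules with torsion-free cyclic invariants (Proposition \ref{prop:ghost}), and every bimodule admits a free resolution satisfying this condition (Lemma \ref{restorfree}), naturality then upgrades the agreement on ghosts to an identity of maps.

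For part (3), the identification $\varphi \leftrightarrow x\otimes\psi$ immediately gives $\tr_R(\varphi) = \ev(x\otimes\psi) = \psi(x) \pmod{[R,R]}$. For $n=1$, the Witt-vector statement is a direct unwinding: $\eta_*$ sends $\tau_1(\varphi)$ to $\tau_1(x,\psi)$ (using the factorization $P\times P^\vee \to P\otimes_R P^\vee$ appearing in Proposition \ref{prop:traceinvariance}); then $T$ sends this generator to $\tau_1(\psi,x) = 1 - (\psi\otimes x)t$; and $\ev_*$ yields $1-\psi(x)t$. For general $n$, I will use $1 - \varphi t^n = V_n(1 - \varphi t)$ and show that $V_n$ commutes with \eqref{map_end}. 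Naturality of $V_n$ in bimodule morphisms handles $\eta_*$ and $\ev_*$; the remaining compatibility of $V_n$ with the trace-property isomorphism $T$ is the main technical point, which I expect to handle by the same reduction to bimodules with injective ghost map, where the identity becomes a direct manipulation of transfers and cyclic permutations via the explicit ghost-component formulas in Propositions \ref{prop:verschiebung} and \ref{prop:traceinvariance}.
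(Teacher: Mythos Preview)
Your treatment of part (1) is essentially the paper's: both unwind the composite $\ev_*\circ T\circ\eta_*$ on ghost components and identify the result with the Hattori--Stallings trace.

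For parts (2) and (3), however, there is a genuine gap in the step ``naturality then upgrades the agreement on ghosts to an identity of maps.'' The resolution technique used throughout the paper (Lemma \ref{restorfree}, Proposition \ref{prop:coeq}) applies to natural transformations between functors defined on $\bimod$: one resolves an arbitrary object by free ones, uses that the functors commute with reflexive coequalizers, and concludes from ghost-injectivity on free objects. But the two composites in part (2) are not maps of this shape: they depend on the auxiliary projective modules $P$ and $Q$, and $W(\End_R P\times\End_R Q)$ is not a functor on $\bimod$ in the relevant sense. Resolving the bimodule $(\End_R P\times\End_R Q;\End_R P\times\End_R Q)$ by free bimodules does not produce objects through which the map \eqref{map_end} factors, so you cannot invoke ghost-injectivity this way. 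One could try to set up a larger category of tuples $(R,P,Q)$ with its own free resolutions and coequalizer properties, but this is real extra work that you have not done.

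The paper avoids this entirely. For (2) it observes that both composites are induced (via Corollary \ref{cor:morita}) by additive functors $\Proj_{\End_R P\times\End_R Q}\to\Proj_R$, and these functors visibly agree; no ghost argument is needed. For (3) the paper first uses (2) to pass to $P\oplus R$, so that $P$ has $R$ as a summand; then $\varphi$ can be written as a \emph{single elementary tensor} in $(P\otimes_R P^\vee)^{\otimes_{\End_R P} n}$, and applying the characterising diagram of Proposition \ref{prop:traceinvariance} directly gives $1-\psi(x)t^n$. Your $V_n$-reduction for (3) is salvageable---the compatibility of $T$ with $V_n$ can in fact be checked on the $\tau_k$ generators using the characterising diagrams of Propositions \ref{prop:verschiebung} and \ref{prop:traceinvariance}, since both shifts are by one step in the same $2nk$-tuple---but the ghost/resolution route you sketch for it inherits the same problem as (2).
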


\begin{proof}
We abbreviate $E :=  \End_R(P)$.
By definition and the proof of Corollary \ref{cor:morita}, the map \eqref{map_end} is the composite of the top row of the diagram
\[
\xymatrix@R=15pt@C=22pt{
W(E;E)\ar[r]_-{\cong}^-{\eta_*}\ar[d]_-{\tlog}
&W(E;P\otimes_RP^{\vee})\ar[r]_-{\cong}^-{T}\ar[d]_-{\tlog}
& W(R;P^\vee\otimes_{E} P)\ar[r]^-{\ev_*}\ar[d]_-{\tlog}
&W(R;R)\ar[d]_-{\tlog}
\\
\displaystyle\prod_{n\geq 1}(E^{\circledcirc_{E}n})^{C_n}\ar[r]_-{\prod \eta}^-{\cong}\ar[d]^-{\cong}
&\displaystyle\prod_{n\geq 1}((P\otimes_RP^{\vee})^{\circledcirc_{E}n})^{C_n}\ar[r]_-{\sigma}\ar[d]^-{\cong}
&\displaystyle\prod_{n\geq 1}((P^{\vee}\otimes_{E}P)^{\circledcirc_{R}n})^{C_n}\ar[r]_-{\prod\ev}
&\displaystyle\prod_{n\geq 1}(R^{\circledcirc_{R}n})^{C_n}\ar[d]^-{\cong}
\\
\displaystyle\prod_{n\geq 1}E/[E,E]\ar[r]_-{\prod \eta}^-{\cong}
&\displaystyle\prod_{n\geq 1}P\otimes_RP^{\vee}/[E,P\otimes_RP^{\vee}]\ar[rr]_-{\prod\ev}
&
&\displaystyle\prod_{n\geq 1}R/[R,R]
}
\]
where the composite of the bottom row is by definition the product of the traces.
The description in ghost components follows from the commutativity of this diagram. The three upper squares commute by the naturality of the ghost map and by Proposition \ref{prop:traceinvariance}. The lower middle isomorphism which makes the lower left square commute is defined by iterating the multiplication maps
\[
(P\otimes_RP^{\vee})\otimes_{\mathbb{Z}}(P\otimes_RP^{\vee})\xrightarrow{P\otimes_R\ev\otimes_RP^{\vee}}P\otimes_RR\otimes_R P^{\vee}\cong P\otimes_RP^{\vee}
\]
which correspond to the multiplication of $E$ under $\eta$. It is then easy to see that both composites in the lower right rectangle are
\[
(p_1\otimes_R\lambda_1)\otimes_E\dots \otimes_E(p_n\otimes_R\lambda_n)\longmapsto \lambda_n(p_1)\lambda_1(p_2)\dots \lambda_{n-1}(p_n).
\]
For the second property we note that both maps $W(\End_R P \times \End_R Q) \to W(R)$ are induced by functors 
\[
\operatorname{Proj}_{\End_R P \times \End_R Q} \to \operatorname{\Proj_R}
\]
which are easily seen to agree with the functor that sends $(M,N)$ to $(M \oplus N) \otimes_{(\End_R P \times \End_R Q)} (P \oplus Q) $. 

For the third property, consider $\varphi = x \circ \psi \in \End_R{P}$. Using property (2), we can assume that $P$ admits $R$ as a direct summand, by replacing it with $P\oplus R$ if necessary (and replacing $\varphi$ correspondingly by the map $\varphi\oplus 0: P\oplus R \to P\oplus R$). So we can choose $f: P\to R$ and $e: R\to P$ with $f\circ e=\id$. Now the element 
\[
\varphi\in \End_R P\cong P\otimes_R P^{\vee} \otimes_{\End_R P} \ldots \otimes_R P^\vee
\]
can be represented by the elementary tensor
\[
x\otimes_R f\otimes_{\End_R P} e \otimes_R \ldots \otimes_{\End_R P} e \otimes_R \psi.
\]
By Proposition \ref{prop:traceinvariance}, the map \eqref{map_end} therefore sends $(1-\varphi t^n)$ to $(1-\psi(x)f(e)\cdots f(e)t^n)=(1-\psi(x)t^n)$ as claimed. 
\end{proof}

\begin{rem}
Property (3) of Proposition \ref{prop:moritaiso} uniquely determines the map \eqref{map_end} in the sense that every other additive map $W(\End_R(P)) \longrightarrow W(R)$ with the same value on rank 1 endomorphisms agrees with our map \eqref{map_end}.
\end{rem}
\begin{defn}\label{char_el}
Let $P$ be a finitely generated projective $R$-module and $f\in \End_R(P)$.
We define the \emph{characteristic element}  $\chi_f \in W(R)$ to be the image of $f$ under the map
\[
\chi: \End_R(P)\stackrel{\tau}{\longrightarrow}W(\End_R(P)) \longrightarrow W(R)
\]
where  $\tau(f)=1 - f t$ as before and the second map is the map \eqref{map_end}.
\end{defn}

In the commutative case, where $W(R) = 1 + tR\pow{t}$, we will see in Proposition \ref{prop:chicomm} that on free $R$-modules \[\chi_A = \det(1 - A t),\] which agrees with the characteristic polynomial of $A$ up to a substitution. So one can view $\chi_f$ as a noncommutative generalisation of the characteristic polynomial. Note that $W(R)$ is in general a  quotient of the group of special units in the power series ring, so individual coefficients of $\chi_f$ are not well-defined. Also, for a general $R$ and $P$, there does not need to be a polynomial representative for $\chi_f$.

We now prove that the characteristic element $\chi_f$ satisfies the usual properties of the characteristic polynomial.

\begin{lemma}
\label{lem:charpolyproperties}
The characteristic element has the following properties:

\begin{enumerate}
\item It is natural under basechange.
\item For two endomorphisms  $f,g: P\to P$  of a finitely generated projective $R$-module, we have $\chi_{fg} = \chi_{gf}$.
\item For an endomorphism $f: P \to P$ the $n$-th ghost component of $\chi_f$ is given by the trace $\tr_R(f^n) \in R/[R,R]$. 
\item (Additivity) For a short exact sequence of endomorphisms, i.e. a commutative diagram in $\Proj_R$
\[
\begin{tikzcd}
0 \rar & P_1 \rar\dar{f_1} & P_2 \rar\dar{f_2} & P_3 \rar\dar{f_3} & 0\\
0 \rar & P_1 \rar & P_2 \rar & P_3 \rar & 0
\end{tikzcd}
\]
with exact rows, we have $\chi_{f_2} = \chi_{f_1}+\chi_{f_3}$ (cf. Remark \ref{rem:additive}).

\end{enumerate}
\end{lemma}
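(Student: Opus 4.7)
Properties (1)--(3) will be immediate consequences of the construction, so I would dispatch them first. For (1), each ingredient in Definition \ref{char_el} -- the Teichm\"uller $\tau$ and the map \eqref{map_end} from Corollary \ref{cor:morita} -- is natural in the data $(R;P)$, so $\chi_f$ is natural under base change. For (2), the defining relation $\tau(rm)=\tau(mr)$ of $W(\End_R P;\End_R P)$ applied with $r=f$, $m=g$ already gives $\tau(fg)=\tau(gf)$ in $W(\End_R P)$, and applying \eqref{map_end} yields the claim. For (3), I would combine Proposition \ref{prop:tlogspecialvalues}(2), which gives $\tlog(1-ft)=\sum_n f^n t^n$ in ghost components, with Proposition \ref{prop:moritaiso}(1), which identifies the ghost of \eqref{map_end} with $\prod_n \tr_R$; this yields $\tr_R(f^n)$ as the $n$-th ghost of $\chi_f$.

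The substance lies in (4). I would first use projectivity of $P_3$ to choose a splitting so that $P_2\cong P_1\oplus P_3$ and $f_2$ takes the block upper triangular form $A=\begin{pmatrix} f_1 & h \\ 0 & f_3 \end{pmatrix}$. The overall strategy is to compare ghost components in a universal situation where the ghost map is injective, using (3). As a preliminary reduction, pick $Q_i$ with $\tilde P_i:=P_i\oplus Q_i\cong R^{n_i}$ free, set $\tilde f_i:=f_i\oplus 0_{Q_i}$, and extend $h$ by zero to $\tilde h:\tilde P_3\to\tilde P_1$. Under the reordering $\tilde P_1\oplus \tilde P_3\cong P_2\oplus(Q_1\oplus Q_3)$, the extended block-triangular endomorphism $\tilde A=\begin{pmatrix}\tilde f_1 & \tilde h \\ 0 & \tilde f_3\end{pmatrix}$ takes the direct-sum form $A\oplus 0$, so the split additivity from Proposition \ref{prop:moritaiso}(2) gives $\chi_{\tilde A}=\chi_A$ and $\chi_{\tilde f_i}=\chi_{f_i}$. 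Hence we may assume $P_1=R^{n_1}$ and $P_3=R^{n_3}$ are free.

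With $f_1,f_3,h$ now matrices over $R$, I would introduce the free associative ring $S=\Z\langle x^{(1)}_{ij},\, x^{(3)}_{ij},\, y_{ij}\rangle$ on indeterminates for their entries, mapping to $R$ by sending each symbol to its actual entry, together with the universal matrices $f_1^S,f_3^S,h^S$ and $A^S=\begin{pmatrix} f_1^S & h^S \\ 0 & f_3^S \end{pmatrix}$. By Lemma \ref{restorfree} and Proposition \ref{prop:ghost}, $W(S)$ embeds via its ghost map. The block structure gives $(A^S)^n=\begin{pmatrix} (f_1^S)^n & * \\ 0 & (f_3^S)^n \end{pmatrix}$, so by (3) the $n$-th ghost components of $\chi_{A^S}$ and $\chi_{f_1^S}+\chi_{f_3^S}$ both equal $\tr((f_1^S)^n)+\tr((f_3^S)^n)$. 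Ghost injectivity forces $\chi_{A^S}=\chi_{f_1^S}+\chi_{f_3^S}$ in $W(S)$, and naturality under $S\to R$ transports this to $\chi_A=\chi_{f_1}+\chi_{f_3}$ in $W(R)$.

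The main obstacle will be (4), and specifically the preliminary enlargement to free $P_1,P_3$: one has to carefully identify $\tilde A$ with $A\oplus 0$ after reordering summands so that Proposition \ref{prop:moritaiso}(2) applies, allowing the universal ring to be chosen free with injective ghost map.
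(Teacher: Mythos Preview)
Your treatment of (1)--(3) matches the paper's. For (4) you take a genuinely different route. The paper stays inside $W(\End_R(P_2))$ and argues by explicit matrix factorisation: after writing $f_2$ in block upper-triangular form with off-diagonal entry $\rho$, it uses the relation $\tau(ab)=\tau(ba)$ to show that $\tau\begin{pmatrix}0&\rho\\0&0\end{pmatrix}=1$, then factors $\bigl(1-\begin{pmatrix}f_1&\rho\\0&f_3\end{pmatrix}t\bigr)$ as a product of three elementary pieces and collapses it to $\tau\begin{pmatrix}f_1&0\\0&f_3\end{pmatrix}$, reducing to the split case handled by Proposition~\ref{prop:moritaiso}(2). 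Your approach instead enlarges to free $P_i$, passes to a universal free ring, and invokes ghost injectivity together with (3). This is valid and very much in the spirit of the constructions in \S\ref{sec:operators}. The paper's argument is shorter, self-contained, and does not rely on (3); yours makes the underlying trace identity $\tr(A^n)=\tr(f_1^n)+\tr(f_3^n)$ the visible reason the result holds.

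Two small points to tighten. First, your citation of Lemma~\ref{restorfree} is off: that lemma treats free bimodules $(S;Q)$, and $(S;S)$ is not one. What you need is that for a free ring $S$ the group $S/[S,S]\cong S^{\circledcirc_S n}$ is torsion-free (it is free abelian on cyclic words in the generators), so the hypothesis of Proposition~\ref{prop:ghost} is met directly. Second, the ``reordering'' identifying $\tilde A$ with $A\oplus 0$ is a conjugation by an isomorphism between distinct modules; invariance of $\chi$ under this is not literally your item (2), but follows since the map \eqref{map_end} depends on $P$ only up to isomorphism. Worth a sentence.
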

\begin{proof}
  The first statement is obvious from the definition.
The second statement is an immediate consequence of the fact that $1 - fg t = 1 - gft$ in $W(\End_R(P))$ by definition. The third statement follows immediately from Propositions \ref{prop:moritaiso}\eqref{prop:item1} and \ref{prop:tlogspecialvalues}.

For the fourth statement first consider the special case where the endomorphism splits, by which we mean that there exists a section $P_3 \to P_2$ such that, under the induced isomorphism $P_1\oplus P_3 \cong P_2$, $f_2$ corresponds to $f_1 \oplus f_3$.
In other words, $1- f_2t$ is the image of $1- (f_1, f_3)t $ under the map $\oplus :  W(\End_R(P_1) \times \End_R(P_3)) \to W(\End_R(P_2))$. Then the claim follows from (2) of Proposition \ref{prop:moritaiso}. 
For the general case we choose a section $s: P_3\to P_2$, and under the isomorphism $P_2 \cong P_1\oplus P_3$ we write $f_2$ as a ``block matrix''
\[
\left(\begin{array}{c|c}
f_1 &\rho\\
\hline
0 & f_3
\end{array}\right)
\]
where $f_i$ is an endomorphism of $P_i$, for $i=1,2$, and $\rho: P_3 \to P_1$ is $R$-linear.
In $W(\End_R(P_1\oplus P_3))$ we see that
\[
\tau\left(\begin{array}{c|c}
0 &\rho\\
\hline
0 & 0\end{array}\right)
= \tau\left(\left(\begin{array}{c|c}
1 &0\\
\hline
0 & 0\end{array}\right)\left(\begin{array}{c|c}
0 &\rho\\
\hline
0 & 0\end{array}\right)\right)
= \tau\left(\left(\begin{array}{c|c}
0 &\rho\\
\hline
0 & 0\end{array}\right)\left(\begin{array}{c|c}
1 &0\\
\hline
0 & 0\end{array}\right)\right)
= \tau\left(\begin{array}{c|c}
0 & 0\\
\hline
0 & 0\end{array}\right) = 1,
\]
using the relation $\tau(ab) = \tau(ba)$ that holds in $W(-)$ by definition. So the characteristic element of $\left(\begin{array}{c|c}
0 &\rho\\
\hline
0 & 0\end{array}\right)$ vanishes, and we further see that

\begin{gather*}
\left(1 - \left(\begin{array}{c|c}
f_1 &\rho\\
\hline
0 & f_3
\end{array}\right)t\right) = \left(1 - \left(\begin{array}{c|c}
0 &0\\
\hline
0 & f_3
\end{array}\right)t\right)\left(1 - \left(\begin{array}{c|c}
0 &\rho\\
\hline
0 & 0
\end{array}\right)t\right)\left(1 - \left(\begin{array}{c|c}
f_1 &0\\
\hline
0 & 0
\end{array}\right)t\right),\\
\left(1 - \left(\begin{array}{c|c}
0 &0\\
\hline
0 & f_3
\end{array}\right)t\right)\left(1 - \left(\begin{array}{c|c}
f_1 &0\\
\hline
0 & 0
\end{array}\right)t\right) = \left(1 - \left(\begin{array}{c|c}
f_1 &0\\
\hline
0 & f_3
\end{array}\right)t\right),
\end{gather*}
which together imply that
\[
\tau\left(\begin{array}{c|c}
f_1 &\rho\\
\hline
0 & f_3
\end{array}\right) = \tau\left(\begin{array}{c|c}
0 &0\\
\hline
0 & f_3
\end{array}\right)\tau\left(\begin{array}{c|c}
0&\rho\\
\hline
0 & 0
\end{array}\right)\tau\left(\begin{array}{c|c}
f_1 &0\\
\hline
0 & 0
\end{array}\right)=\tau\left(\begin{array}{c|c}
0 &0\\
\hline
0 & f_3
\end{array}\right)\tau\left(\begin{array}{c|c}
f_1 &0\\
\hline
0 & 0
\end{array}\right)=\tau\left(\begin{array}{c|c}
f_1 &0\\
\hline
0 & f_3
\end{array}\right)
\]
Thus, the characteristic element of $f_2$ agrees with the one of $f_1 \oplus f_3$, so by the previous case $\chi_{f_2} = \chi_{f_1} + \chi_{f_3}$.
\end{proof}

\begin{prop}\label{prop:chicomm}
For $R$ a commutative ring, $P=R^n$ a free module of rank $n$, and $f\in \End_R(P)$ an endomorphism, the characteristic element $\chi_f \in W(R) = 1 + tR\pow{t}$ is related to the classical characteristic polynomial $\chi_f^{\mathrm{cl}}$ by
\[
\chi_f(t) = \det(\id - tf) = t^n \det(t^{-1}\id - f) = t^n \chi_f^{\mathrm{cl}}(t^{-1}).
\]
\end{prop}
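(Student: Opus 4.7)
The plan is to reduce to a universal matrix and then compare ghost components. First, I would observe that both $\chi_{(-)}$ and $A \mapsto \det(\mathrm{id}-tA)$ are natural under ring homomorphisms---the former by Lemma~\ref{lem:charpolyproperties}(1), the latter by standard properties of the determinant. Any commutative pair $(R,f)$ with $f \in \End_R(R^n)$ is the image of the universal pair $(R_0,f_0)$, where $R_0 = \Z[x_{ij}]_{1\le i,j \le n}$ and $f_0 = (x_{ij})$, under the unique ring map $x_{ij}\mapsto f_{ij}$. So it suffices to establish the identity over $R_0$.

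The advantage of working over $R_0$ is that it is a polynomial ring and hence torsion-free, so by Proposition~\ref{prop:ghost} the ghost map $\tlog : W(R_0) \hookrightarrow \prod_{k\ge 1} R_0$ is injective. It is therefore enough to check that $\chi_{f_0}$ and $\det(\mathrm{id}-tf_0)$ have the same image under $\tlog$. The $k$-th ghost component of $\chi_{f_0}$ is computed by Lemma~\ref{lem:charpolyproperties}(3) to be $\tr(f_0^k)$. For the other side, recall that on commutative rings $\tlog$ specialises to $-t\cdot\operatorname{dlog}$. Working inside $R_0 \otimes \Q$ (into which $R_0$ embeds, since $R_0$ is $\Z$-flat) and using Jacobi's formula $\tfrac{d}{dt}\det(\mathrm{id}-tA) = -\det(\mathrm{id}-tA)\cdot\tr\bigl((\mathrm{id}-tA)^{-1}A\bigr)$, one computes
\[
-t\cdot\operatorname{dlog}\bigl(\det(\mathrm{id}-tf_0)\bigr) \;=\; t\cdot\tr\bigl((\mathrm{id}-tf_0)^{-1}f_0\bigr) \;=\; \sum_{k\ge 1}\tr(f_0^k)\, t^k,
\]
so the $k$-th ghost component of $\det(\mathrm{id}-tf_0)$ is again $\tr(f_0^k)$.

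The two elements therefore agree in every ghost component, and by injectivity of $\tlog$ on $W(R_0)$ they coincide in $W(R_0)$; base-changing along $R_0\to R$ yields $\chi_f = \det(\mathrm{id}-tf)$ for arbitrary commutative $R$ and $f\in\End_R(R^n)$. The final identities $\det(\mathrm{id}-tf) = t^n\det(t^{-1}\mathrm{id}-f) = t^n\chi_f^{\mathrm{cl}}(t^{-1})$ are immediate manipulations of the determinant of an $n\times n$ matrix and require no further input. The only real subtlety is legitimising the use of the classical formal power series identity $\operatorname{dlog}\det = \tr\circ\operatorname{dlog}$; this is handled cleanly by reducing to the universal torsion-free ring $R_0$ and passing to $R_0\otimes\Q$, where divisions by integers are permitted.
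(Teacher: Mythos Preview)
Your proof is correct and shares the opening move with the paper's proof: both reduce to the universal matrix over $R_0=\Z[x_{ij}]$ by naturality. From there the arguments diverge. The paper embeds $R_0$ into an algebraically closed field $K$ of characteristic zero (using that $R_0$ is a domain and that $W(R_0)\to W(K)$ is injective), brings $f$ into triangular form over $K$, and then reads off $\chi_f=\prod_i(1-\lambda_i t)$ from the additivity property of Lemma~\ref{lem:charpolyproperties}. Your route instead stays over $R_0$, invokes injectivity of the ghost map there (Proposition~\ref{prop:ghost}), and matches ghost components directly via the Jacobi identity $-t\operatorname{dlog}\det(\id-tA)=\sum_{k\ge 1}\tr(A^k)t^k$.

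Each approach has a mild cost and a mild gain. Yours avoids the extraneous passage to an algebraically closed field and triangularisation, at the price of importing the classical formula $\operatorname{dlog}\det=\tr\circ\operatorname{dlog}$ as an external input (your handling of this via $R_0\hookrightarrow R_0\otimes\Q$ is fine, though the identity is in fact a polynomial identity valid over any commutative ring). The paper's argument, by contrast, uses nothing beyond the additivity already proved in Lemma~\ref{lem:charpolyproperties}, so it is more self-contained within the paper's framework, but the detour through $K$ is arguably heavier machinery for what is ultimately a formal identity.
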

\begin{proof}
By naturality, it is sufficient to check the claim in the universal case, $R=\Z[a_{ij}\ |\ 1\leq i\leq n, 1\leq j\leq n]$, with $f$ the endomorphism given by the matrix $(a_{ij})$. Since $R$ is a domain, it embeds into an algebraically closed field $K$ of characteristic zero. As the map $W(R) \to W(K)$ is injective, it suffices to check that $\chi_f(t)$ and $t^n \chi_f^{\mathrm{cl}}(t^{-1})$ agree in $W(K)$. Over the algebraically closed $K$ however, $f$ can be brought into triangular form by a base change. This does not affect $\chi_f^{\mathrm{cl}}$ nor $\chi_f$, the latter because of part (2) of Lemma \ref{lem:charpolyproperties}. For triangular $f$, with diagonal entries $\lambda_1, \ldots, \lambda_n$, part (3) of Lemma \ref{lem:charpolyproperties} implies $\chi_f = \prod (1-\lambda_i t)$, which agrees with $t^n \prod (t^{-1} - \lambda_i) = t^n \chi_f^{\mathrm{cl}}$. 
\end{proof}

\begin{example}
  Even in the commutative case, the characteristic element is slightly more general than the  usual inverse characteristic polynomial (by which we mean $\det(\id-tf)$): it makes sense for non-free projective modules. We note however that in the commutative case our polynomial is given by the  formula
\[
\chi_f(t)=\sum_{i\geq 0}(-1)^i \tr(\Lambda^if)t^i
\]
which makes sense for projective modules. This is well-known and for example already appears in Almkvist's work \cite{Almkvist} and can be used as a definition. 

The usual (meaning: not inverse) characteristic polynomial can in the commutative situation also be extended to endomorphisms $f: P \to P$ of finitely generated projective modules over $R$. One simply defines it as before by the formula 
\[
\chi_f^{\mathrm{cl}}(t) = \det(t \cdot \id - f)
\]
 where $t \cdot \id - f$ is considered as an endomorphism of the $R[t]$-module $P[t]$. For this definition we use that the determinant makes sense for arbitrary endomorphisms $g: Q \to Q$ of finitely generated projective $S$-modules where $S$ is a commutative ring (here: $S = R[t]$ and $Q = P[t]$). One simply defines $\Lambda^{\mathrm{rk}} Q$ to be the top exterior power of $Q$, where $\mathrm{rk}: \operatorname{Spec} (S) \to \mathbb{N}$ is the locally constant rank function for $Q$. Then $\Lambda^{\mathrm{rk}} Q$ is a line bundle on $\mathrm{Spec}(S)$ and $ \det(g) := \Lambda^{\mathrm{rk}} g$ is an endomorphism of this line bundle, thus given by an element of $S$. Alternatively one finds a complement $Q'$ such that $Q \oplus Q'$ is free and defines the determinant of $g$ to be the determinant of the endomorphism $g \oplus \id_{Q'}$. We then have as in Proposition \ref{prop:chicomm} the relation
 \[
 \chi_f(t) = t^{\mathrm{rk}} \, \chi_f^{\mathrm{cl}}(t^{-1}).
 \]
and the coefficients of $\chi_f^{\mathrm{cl}}$ are given by traces of the exterior powers $\Lambda^{\mathrm{rk}-i}f$ similar to the formula above.

\end{example}

\begin{example}
We compute the characteristic element of the endomorphism
\[
A = \left(\begin{array}{cc} a&b\\c&d\end{array}\right) \in \Mat_{2\times 2}(R).
\]
The fourth statement of Lemma \ref{lem:charpolyproperties} shows that elements of $W(\Mat_{n\times n}(R))$ of the form $1-Nt$, with $N$ strictly lower or upper triangular, vanish. More generally, elements of the form $1 - N t^k = V_k(1 - Nt)$ vanish. So we can multiply an element of $W(\Mat_{n\times n}(R))$ with elementary matrices of the form $1-E_{ij}(t\lambda)$, $i\neq j$, $\lambda$ some power series, without changing it:
\begin{align*}
\tau(A) &= \left(\begin{array}{cc} 1-at&-bt\\-ct&1-dt\end{array}\right) \\
  &= \left(\begin{array}{cc} 1-at&-bt\\0&(1-dt)- c(1-at)^{-1}bt^2\end{array}\right)\\
  &= \left(\begin{array}{cc} 1-at&0\\0&(1-dt)- c(1-at)^{-1}bt^2\end{array}\right)
\end{align*}
From this we see that 
\[\chi_A = (1-at)(1-dt) - (1-at)c(1-at)^{-1}bt^2=
1-(a+d)t+(ad-cb)t^2-(ca-ac)bt^3-(ca^2-aca)bt^4-\dots.
\]
We observe that for commutative $R$, this simplifies to
\[
(1-at)(1-dt) - bc t^2 = 1 - (a+d)t + (ad-bc)t^2,
\]
which is, up to a substitution, the usual characteristic polynomial. However, as long as $a$ and $c$ do not commute, there is no reason to expect $\chi_A$ to have a polynomial representative.

Observe also that by a different row operation (killing the upper right entry), we could have obtained the representative
\[
\chi_A = (1-dt)(1-at) - (1-dt)b(1-dt)^{-1}ct^2, 
\]
which is not obviously equal to $(1-at)(1-dt) - (1-at)c(1-at)^{-1}bt^2$ under the relations we imposed on $W(R)$. It is interesting to see how the various symmetries of the characteristic polynomial arise in these noncommutative formulas.
\end{example}

\begin{rem}
Using the Gauss algorithm to compute the characteristic element as in the previous example also works for larger matrices. It can in fact be used to \emph{define} the characteristic element $\chi_f$, in which case the well-definedness is the crucial property to establish. This strategy is employed by Ranicki and Sheiham (see \cite{Sheiham2}) and we will return to this viewpoint in Remark \ref{Remark: non-commutative det} and \S\ref{sec_ratWitt}. 
\end{rem}

\begin{defn}\label{cyclictrace}
 The cyclic $K$-group $K_0^{\operatorname{cyc}}(R)$ is the quotient of the group completion of the abelian monoid of isomorphism classes $[P,f]$ of endomorphisms $f$ of finitely generated projective $R$-modules $P$, modulo the zero endomorphisms and the relation $[P_2,f_2]=[P_1,f_1]+[P_3,f_3]$ if $f_2$ is an extension of $f_1$ and $f_3$ as in Lemma  \ref{lem:charpolyproperties}.

The \emph{cyclic trace map} $K_0^{\operatorname{cyc}}(R) \to W(R)$ is the natural group homomorphism that sends an element $[P,f]$ to $\chi_f\in W(R)$. This is well-defined by Lemma \ref{lem:charpolyproperties}.
\end{defn}

\subsection{Determinants}

We now give a quick discussion of non-commutative determinants and Dieudonn\'e determinants over $R\pow{t}$. We let $R$ be a possibly non-commutative ring and consider a finitely generated projective $R$-module $P$. This gives rise to a finitely generated projective module $P\pow{t}$ over the power series ring $R\pow{t}$. We have \[
\End_{R\pow{t}}(P\pow{t}) = \End_R(P)\pow{t}.
\]
We let $\SEnd_{R\pow{t}}(P\pow{t})$ be the subset of $\End_{R\pow{t}}(P\pow{t})$ consisting of those endomorphisms which reduce to the identity modulo $t$. Under the isomorphism to $\End_R(P)\pow{t}$ these correspond to the power series whose first coefficient is the identity.

\begin{defn}
The  (reduced)  non-commutative determinant is the composite 
\[
\det:  \SEnd_{R\pow{t}}(P\pow{t}) \to W(\End_R(P)) \to W(R) 
\]
where the first map sends the 
 power series $\id - f t$ with $f \in \End_R(P)\pow{t}$ to the represented element in $W(\End_R(P))$ and the second map is the map \eqref{map_end}.
\end{defn}

\begin{rem}
With this determinant we can write the characteristic element of an endomorphism $f: P \to P$ (see Definition \ref{char_el}) as
\[
\chi_f = \det(\id - ft),
\]
where $\id - ft$ is considered as a special endomorphism of $P\pow{t}$.
\end{rem}

\begin{lemma}\label{lem_det_relations}
\leavevmode
The determinant is conjugation invariant, that is for every $R\pow{t}$-linear isomorphism $\alpha: P\pow{t} \to Q\pow{t}$ we have 
\[
\det(\alpha f \alpha^{-1}) = \det(f) \ .
\]
for any special endomorphism $f$ of $P\pow{t}$.
\end{lemma}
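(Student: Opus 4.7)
The plan is to reduce to a statement inside $W(\End_R(P))$ and then invoke the generalized Teichm\"uller relation of Lemma \ref{lem:otherrelations}(4). First I would factor $\alpha$ as $\alpha = \alpha_0 \cdot \beta$, where $\alpha_0 \colon P\pow{t} \to Q\pow{t}$ denotes the $R\pow{t}$-linear extension of the reduction $\alpha|_{t=0}\colon P\to Q$, and $\beta := \alpha_0^{-1}\alpha \in \End_{R\pow{t}}(P\pow{t})$ is an automorphism reducing to the identity mod $t$, i.e.\ lies in $\SEnd_{R\pow{t}}(P\pow{t})$. Then $\alpha f\alpha^{-1} = \alpha_0(\beta f\beta^{-1})\alpha_0^{-1}$.

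Next, conjugation by $\alpha_0$ defines a ring isomorphism $\End_R(P)\to \End_R(Q)$, and under this isomorphism the $\End_R(P)$-$R$-bimodule $P$ corresponds to the $\End_R(Q)$-$R$-bimodule $Q$. By the naturality of the Morita map from Corollary \ref{cor:morita}, the two maps $W(\End_R(P))\to W(R)$ and $W(\End_R(Q))\to W(R)$ agree after identifying the sources via $\alpha_0$. Hence
\[
\det\bigl(\alpha_0(\beta f\beta^{-1})\alpha_0^{-1}\bigr) = \det(\beta f\beta^{-1}),
\]
so it suffices to treat the case $P=Q$ and $\alpha=\beta\in\SEnd_{R\pow{t}}(P\pow{t})$.

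It therefore remains to show that $1-\beta f\beta^{-1}t = 1-ft$ already in $W(\End_R(P))$. For this I would apply Lemma \ref{lem:otherrelations}(4) with the factorization inside $\widehat{T}(\End_R(P);\End_R(P)) = \End_R(P)\pow{t}$ given by $F := \beta$ and $G := f\beta^{-1}t$. Since $t$ is central, $G$ has trivial constant term while $FG = \beta f\beta^{-1}t$ and $GF = f\beta^{-1}\beta t = ft$. Lemma \ref{lem:otherrelations}(4) then forces $(1-FG)(1-GF)^{-1}=1$ in $W(\End_R(P))$, giving the desired equality. Pushing forward along the Morita map \eqref{map_end} completes the proof.

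The only non-routine ingredient is the reduction step justifying that the Morita map is natural with respect to the $R$-module isomorphism $\alpha_0\colon P\to Q$; this is essentially the bimodule-level naturality already built into Corollary \ref{cor:morita}, so I expect no real obstacle, only a brief bookkeeping argument.
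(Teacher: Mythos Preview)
Your approach is essentially the paper's: reduce to $P=Q$ via naturality of the Morita map in isomorphisms $P\cong Q$, then use the inhomogeneous relation from Lemma~\ref{lem:otherrelations} inside $W(\End_R(P))$. Two remarks, one cosmetic and one substantive.

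First, the factorisation $\alpha=\alpha_0\beta$ is unnecessary. Once you have identified $P$ with $Q$ along $\alpha_0$, the remaining argument works for \emph{any} automorphism of $P\pow{t}$, not only a special one; the paper simply treats $\alpha$ as an automorphism of $P\pow{t}$ without further reduction.

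Second, and more importantly, there is a slip in the last step. The map $\SEnd_{R\pow{t}}(P\pow{t})\to W(\End_R(P))$ sends a special endomorphism $f$ to the class $[f]$ it represents, \emph{not} to $[1-ft]$. So what you must show is $[\beta f\beta^{-1}]=[f]$ in $W(\End_R(P))$, whereas your choice $F=\beta$, $G=f\beta^{-1}t$ yields the (true but irrelevant) identity $[1-\beta f\beta^{-1}t]=[1-ft]$. The fix is immediate and is exactly what the paper does: since $f$ is special, write $f=1+gt$ with $g\in\End_R(P)\pow{t}$; then $\beta f\beta^{-1}=1+\beta g\beta^{-1}t$, and applying Lemma~\ref{lem:otherrelations}(4) with the pair $(\beta g,\ \beta^{-1}t)$ gives
\[
(1+\beta g\beta^{-1}t)=(1+\beta^{-1}\beta g\, t)=(1+gt)
\]
in $W(\End_R(P))$, which is the desired equality.
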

\begin{proof}
An isomorphism $P\pow{t}\to Q\pow{t}$ reduces to an isomorphism $P\to Q$. We can thus identify $P$ with $Q$ (note that the map \eqref{map_end} is clearly natural in isomorphisms of projective modules), and consider $\alpha$ as an automorphism of $P\pow{t}$.
Now the proof proceeds analogously to the proof of Lemma \ref{lem:charpolyproperties}. Note that by \ref{lem:otherrelations}, we have the relation
\[
(1+abt) = (1+bat)
\]
in $W(\End_R(P))$ for arbitrary, not necessarily homogeneous elements $a,b\in \End_R(P)\pow{t}$. Now if we write $f=1+gt$, we see
\[
\alpha(1+gt) \alpha^{-1} = (1 + \alpha g\alpha^{-1}t) = (1 + gt)
\]
in $W(\End_R(P))$.
\end{proof}

We now define determinants for special endomorphisms of an arbitrary finitely generated projective module $Q$ over $R\pow{t}$. 
\begin{lemma}\label{lem_iso}
Any finitely generated projective module $Q$ over $R\pow{t}$ is up to isomorphism of the form $P\pow{t}$ for $P$ a finitely generated, projective $R$-module.
\end{lemma}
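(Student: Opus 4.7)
The plan is to set $P := Q/tQ$ and exhibit an explicit $R\pow{t}$-linear isomorphism $\tilde s: P\otimes_R R\pow{t} \xrightarrow{\sim} Q$ lifting the identity on $P$.

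First I would check that $P$ is itself finitely generated projective over $R$. Since $Q$ is a direct summand of some $R\pow{t}^n$, applying $-\otimes_{R\pow{t}} R$ realises $P$ as a direct summand of $R^n$, so $P$ is finitely generated projective. In particular $P\otimes_R R\pow{t}$ is a finitely generated projective $R\pow{t}$-module that also reduces to $P$ modulo $t$, and is my candidate isomorphic copy of $Q$.

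Next I would construct $\tilde s$. The reduction $Q \twoheadrightarrow P$ is $R$-linear via $R\hookrightarrow R\pow{t}$, and projectivity of $P$ over $R$ yields an $R$-linear section $s: P \to Q$. I then extend $s$ to an $R\pow{t}$-linear map $\tilde s : P\otimes_R R\pow{t}\to Q$, $p\otimes f\mapsto f\cdot s(p)$. By construction $\tilde s$ reduces modulo $t$ to the identity on $P$. Surjectivity then follows from $t$-adic Nakayama applied to $\coker(\tilde s)$, which is finitely generated (as a quotient of $P\otimes_R R\pow{t}$) and vanishes modulo $t$. Since $Q$ is projective the surjection splits, giving $P\otimes_R R\pow{t} \cong Q \oplus K$ with $K$ a direct summand of a finitely generated module, hence itself finitely generated. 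Reducing the splitting modulo $t$ yields $P\cong P\oplus K/tK$, so $K/tK=0$, and a second application of Nakayama forces $K=0$.

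The only substantive obstacle is that $R$ is not assumed commutative, so I cannot cite Nakayama's lemma naively. The remedy is the observation that the two-sided ideal $tR\pow{t}$ lies in the Jacobson radical of $R\pow{t}$: for any $f\in R\pow{t}$ the elements $1-tf$ and $1-ft$ admit the $t$-adically convergent geometric inverses $\sum_{n\geq 0}(tf)^n$ and $\sum_{n\geq 0}(ft)^n$. With this in hand the noncommutative version of Nakayama applies to any finitely generated $R\pow{t}$-module, which is precisely what the argument above requires. Everything else --- summands of free modules remaining summands after extension of scalars, existence of $R$-linear sections from projectivity of $P$, and extension of $R$-linear maps to $R\pow{t}$-linear maps --- is formal.
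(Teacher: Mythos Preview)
Your argument is correct, modulo two small slips: $\coker(\tilde s)$ is a quotient of $Q$, not of $P\otimes_R R\pow{t}$ (harmless, since $Q$ is also finitely generated), and the implication ``$P\cong P\oplus K/tK\Rightarrow K/tK=0$'' should be stated more carefully --- the abstract isomorphism alone does not force $K/tK=0$, but since $\tilde s$ reduces to the identity, the reduced split sequence reads $0\to K/tK\to P\xrightarrow{\id} P\to 0$, and that does.

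The paper's proof runs in the opposite direction: it uses projectivity of $Q$ (rather than of $P$) to lift the quotient $Q\to P$ along the surjection $P\pow{t}\twoheadrightarrow P$, producing a map $Q\to P\pow{t}$ that reduces to the identity mod $t$, and then appeals to the fact that both modules are $t$-complete and $t$-torsion-free to conclude this lift is an isomorphism. Your Nakayama route is a bit longer but more self-contained: you make explicit that $tR\pow{t}$ lies in the Jacobson radical and run a concrete surjectivity-then-injectivity argument, whereas the paper leaves the inductive passage through $t^n$ (which is what completeness plus torsion-freeness buys) to the reader.
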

\begin{proof}
We compare $Q$ with the module $P\pow{t}$ with $P = Q/t$. Using the fact that $Q$ is projective one finds a lift 
\[
\xymatrix{
& P\pow{t} \ar[d] \\
Q \ar[r] \ar@{-->}[ru] & P
}
\]
in the diagram which reduces to the identity modulo $t$. But since $Q$ and $P\pow{t}$ are both $t$-complete and $t$-torsion free (being f.g. proj.) this map has to be an isomorphism.
\end{proof}
  
For a $R\pow{t}$-module $Q$ we let $\SEnd_{R\pow{t}}(Q)$ be the subset of $\End_{R\pow{t}}(Q)$ consisting of all morphisms which reduce to the identity modulo $t$.  
  
\begin{defn}
For $Q$ a finitely generated projective $R\pow{t}$-module we define 
\[
\det: \SEnd_{R\pow{t}}(Q) \to W(R)
\]
by choosing an isomorphism $Q \cong P\pow{t}$ using Lemma \ref{lem_iso} and forming the composite 
\[
\SEnd_{R\pow{t}}(Q) \cong \SEnd_{R\pow{t}}(P\pow{t}) \to W(R) . 
\]
This does not depend on the choice of isomorphism by Lemma \ref{lem_det_relations}.
\end{defn}

\begin{lemma}\label{det_exact}
The determinant is additive in the following sense:  for a diagram 
\[
\begin{tikzcd}
0 \rar & Q_1\dar{f_1} \rar& Q_2 \rar\dar{f_2}& Q_3\rar\dar{f_3}& 0\\
0 \rar & Q_1 \rar& Q_2 \rar& Q_3 \rar& 0
\end{tikzcd}
\]
in which the vertical maps  
reduce to the identity modulo $t$ and the horizontal sequences agree and are exact, we have $\det(f_2) = \det(f_1) + \det(f_3)$.
\end{lemma}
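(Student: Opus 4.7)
The plan is to adapt the argument from Lemma \ref{lem:charpolyproperties}(4) to the present setting of special endomorphisms of projective $R\pow{t}$-modules. First, since $Q_3$ is finitely generated projective, the exact sequence splits, giving an isomorphism $Q_2 \cong Q_1 \oplus Q_3$ of $R\pow{t}$-modules under which the given inclusion and projection become the canonical ones. Using Lemma \ref{lem_iso}, I would further pick identifications $Q_i \cong P_i\pow{t}$ for finitely generated projective $R$-modules $P_i$, so that $Q_2 \cong P_2\pow{t}$ with $P_2 = P_1 \oplus P_3$. The commutativity of the diagram then forces $f_2$ to take the block upper-triangular form
\[
f_2 = \begin{pmatrix} f_1 & \rho \\ 0 & f_3 \end{pmatrix} \in \SEnd_{R\pow{t}}(P_2\pow{t})
\]
for some $\rho \in \Hom_{R\pow{t}}(P_3\pow{t}, P_1\pow{t})$. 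Since each $f_i$ reduces to the identity modulo $t$, the off-diagonal block $\rho$ reduces to zero, so $\rho = t\tilde\rho$ for some $\tilde\rho \in \Hom_R(P_3, P_1)\pow{t}$.

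Next, I would observe the factorisation of special endomorphisms
\[
f_2 = \begin{pmatrix} \id & 0 \\ 0 & f_3 \end{pmatrix} \cdot \begin{pmatrix} \id & \rho \\ 0 & \id \end{pmatrix} \cdot \begin{pmatrix} f_1 & 0 \\ 0 & \id \end{pmatrix},
\]
verified by direct multiplication. The key step is to show that the middle factor becomes trivial in $W(\End_R(P_2))$: writing it as $\id + Bt$ with $B = \begin{pmatrix} 0 & \tilde\rho \\ 0 & 0 \end{pmatrix} \in \End_R(P_2)\pow{t}$, apply Lemma \ref{lem:otherrelations}(4) with constant idempotent $e = \begin{pmatrix} \id & 0 \\ 0 & 0 \end{pmatrix}$ and $g = Bt$; since $eg = Bt$ and $ge = 0$, the relation $(1 - eg) \sim (1 - ge)$ collapses this factor to $1$.

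Consequently the image of $f_2$ in $W(\End_R(P_2))$ agrees with that of the block diagonal $\begin{pmatrix} f_1 & 0 \\ 0 & f_3 \end{pmatrix}$, which is the image of $(f_1, f_3)$ under the composite
\[
W(\End_R(P_1)) \oplus W(\End_R(P_3)) \cong W(\End_R(P_1) \times \End_R(P_3)) \longrightarrow W(\End_R(P_2))
\]
induced by the block-diagonal ring map (using Lemma \ref{lem:products}). Invoking Proposition \ref{prop:moritaiso}(2), which identifies this map followed by \eqref{map_end} with the sum of the two individual copies of \eqref{map_end}, one concludes that $\det(f_2) = \det(f_1) + \det(f_3)$ in $W(R)$.

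The main obstacle is establishing that the middle triangular factor becomes trivial in $W(\End_R(P_2))$. Because $\rho = t\tilde\rho$ is a power series rather than a single element of the ring, the bare Teichmüller relation $\tau(ab) = \tau(ba)$ does not apply directly; however, Lemma \ref{lem:otherrelations}(4) extends this relation to arbitrary pairs of elements of the completed tensor algebra with at least one of trivial constant term, which is precisely the situation at hand. Once this triviality is secured, the remainder of the argument is a straightforward bookkeeping exercise using Proposition \ref{prop:moritaiso}(2).
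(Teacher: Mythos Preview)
Your proposal is correct and follows essentially the same route as the paper: reduce via Lemma \ref{lem_iso} to a block upper-triangular special endomorphism of $(P_1\oplus P_3)\pow{t}$, then rerun the argument of Lemma \ref{lem:charpolyproperties}(4), replacing the homogeneous Teichm\"uller relation by the inhomogeneous relation of Lemma \ref{lem:otherrelations}(4) to kill the unipotent middle factor. The paper's proof is a one-line pointer to exactly this strategy, so you have simply unpacked what it sketches.
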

\begin{proof}
By Lemma \ref{lem_iso} the diagram is up to isomorphism of the form
\[
\begin{tikzcd}
0 \rar & P_1\pow{t}\dar{f_1} \rar& P_1\pow{t}\oplus P_3\pow{t} \rar\dar{f_2}& P_3\pow{t} \rar\dar{f_3}& 0\\
0 \rar & P_1\pow{t} \rar& P_1\pow{t}\oplus P_3\pow{t} \rar& P_3\pow{t} \rar& 0
\end{tikzcd}
\]
where the horizontal sequences are given by inclusion and projection.
Now the proof proceeds as the proof of Lemma \ref{lem:charpolyproperties}, using that the relevant relations also hold for nonhomogeneous elements.
\end{proof}
\begin{rem} \label{Remark: non-commutative det}
There is a somewhat explicit form of this determinant, explained in \cite[Definition 14.3]{Ranicki} where it lands in a slightly different group of which our $W(R)$ is a quotient. First by adding the identity endomorphism on a complement to $f$ one can assume that $P = R^n$ is free so that $f$  is represented by a matrix  $M \in  \mathrm{Mat}_{n \times n}(R\pow{t})$. Modulo $t$ this matrix reduces to the identity. We can thus use Gaussian elimination to write $M$ as a product $M = LU$ with $L$ a lower triangular matrix with identity entries on the diagonal and $U = (u_{ij})$ an upper triangular matrix whose diagonal entries lie in $1 + t R\pow{t}$.  Then by Lemma \ref{det_exact} the determinant $\det(f) \in W(R)$ is represented by the product $u_{11} \cdot ... \cdot u_{nn}$ of the diagonal entries of $U$. 
\end{rem}

Now we claim that the determinant induces a map $\widetilde{K}_1(R\pow{t}) \to W(R)$ where $\widetilde{K}_1(R\pow{t}) = \ker(K_1(R\pow{t}) \to K_1(R))$ is the reduced $K_1$-group of $R\pow{t}$. Recall that  $K_1(R\pow{t})$ can be realized as
\[
K_1(R\pow{t}) = \frac{\langle [f]: Q \xrightarrow{\cong} Q \mid Q \text{ f.g. projective} \rangle}{[fg] = [f] + [g]\quad  [f \oplus g] = [f] + [g]} 
\]
where the generators are the isomorphism classes of automorphisms. By \cite[Lemma 4.1]{Sheiham2} the reduced group is isomorphic to
\[
\widetilde{K}_1(R\pow{t}) =  \frac{\langle [f]: Q \xrightarrow{\cong} Q \mid Q \text{ f.g. projective}, \varepsilon_*(f) = \id \rangle}{[fg] = [f] + [g],\quad  [f \oplus g] = [f] + [g]}
\]
where $\varepsilon: R\pow{t} \to R$ is the augmentation and the generators are again isomorphism classes, i.e. $f: Q \to Q$ and $g: Q' \to Q'$ are identified if there exists a commutative square of the form 
\[
\xymatrix{
Q \ar[r]^f \ar[d]_\alpha^\cong & Q \ar[d]_\alpha^\cong \\
Q' \ar[r]^g & Q' 
}
\]
for an $R\pow{t}$-linear isomorphism $\alpha$ (without any further condition on $\alpha$).\footnote{Note that Relation 3 in \cite[Lemma 4.1]{Sheiham2} is automatic since we take isomorphism classes of automorphisms.}
\begin{prop}\label{detK1toW} There is a well defined group homomorphism 
\begin{equation}\label{mappi}
\det : \widetilde{K}_1(R\pow{t}) \longrightarrow W(R)
\end{equation}
which sends an element represented by $\alpha \in \SEnd_{R\pow{t}}(Q)$ to $\det(\alpha)$ for any finitely generated projective $R\pow{t}$-module $Q$. 
\end{prop}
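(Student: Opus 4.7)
The plan is to use Sheiham's presentation of $\widetilde{K}_1(R\pow{t})$ recalled just above the statement: generators are isomorphism classes of pairs $(Q,f)$ with $f \in \SEnd_{R\pow{t}}(Q)$ and $Q$ finitely generated projective, subject to the relations $[fg] = [f]+[g]$ and $[f \oplus g] = [f]+[g]$. Accordingly I would verify three things, in order: independence on the isomorphism class of $(Q,f)$, multiplicativity, and additivity under direct sums.

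\emph{Independence of isomorphism class.} Lemma \ref{lem_iso} lets me fix isomorphisms $\phi_Q:Q\cong P\pow{t}$ and $\phi_{Q'}:Q'\cong P'\pow{t}$; an isomorphism $\alpha:(Q,f)\cong (Q',g)$ of endomorphisms induces $\overline{\alpha}:P\cong P'$ on reductions modulo $t$, and after transporting along $\overline{\alpha}$ I may assume $P=P'$. Then $\phi_{Q'}\alpha\phi_Q^{-1}$ is an $R\pow{t}$-linear automorphism of $P\pow{t}$ conjugating $\phi_Q f \phi_Q^{-1}$ to $\phi_{Q'} g\phi_{Q'}^{-1}$, so Lemma \ref{lem_det_relations} gives the equality of determinants. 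The same argument with $Q=Q'$ simultaneously shows that the definition of $\det$ on abstract $Q$ does not depend on the choice of identification $Q\cong P\pow{t}$ used in its construction.

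\emph{Multiplicativity.} Under the canonical isomorphism $\End_{R\pow{t}}(P\pow{t})\cong\End_R(P)\pow{t}=\widehat{T}(\End_R(P);\End_R(P))$, composition of endomorphisms corresponds to multiplication of power series, and $\SEnd_{R\pow{t}}(P\pow{t})$ is identified with the group of special units $\widehat{S}(\End_R(P);\End_R(P))$. The first map in the definition of $\det$ is therefore the quotient $\widehat{S}(\End_R(P))\to W(\End_R(P))$ of Definition \ref{def_Witt}, which by Remark \ref{rem:additive} is a group homomorphism from the multiplicative group of special units to the abelian group $W(\End_R(P))$ written additively. Postcomposing with the group homomorphism \eqref{map_end} yields $\det(fg)=\det(f)+\det(g)$.

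\emph{Additivity under direct sums.} Applying Lemma \ref{det_exact} to the split short exact sequence $0\to Q\to Q\oplus Q'\to Q'\to 0$ with vertical maps $f$, $f\oplus g$, and $g$ immediately gives $\det(f\oplus g)=\det(f)+\det(g)$.

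I do not expect any serious obstacle: all three verifications assemble directly from the preceding lemmas. The only mildly delicate step is the isomorphism invariance, because the definition of $\det$ on a general $Q$ involves a noncanonical choice of normal form $Q\cong P\pow{t}$ from Lemma \ref{lem_iso}; once this well-definedness is established, multiplicativity is built into the very factorisation through $W(\End_R(P))$, and additivity under $\oplus$ is a direct application of Lemma \ref{det_exact}.
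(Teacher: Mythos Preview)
Your proposal is correct and follows essentially the same approach as the paper: the paper's proof simply notes that the first relation $[fg]=[f]+[g]$ holds because $\det$ is a group homomorphism by definition, and that the second relation $[f\oplus g]=[f]+[g]$ follows from Lemma \ref{det_exact}. Your treatment is more explicit about isomorphism invariance, but that point is already absorbed into the paper's definition of $\det$ on a general $Q$ (via Lemma \ref{lem_det_relations}), so no new ingredient is involved.
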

\begin{proof}
We only have to check that the map is well-defined. The first relation follows since $\det$ is a group homomorphism by definition and the second follows from Lemma \ref{det_exact}.
\end{proof}

There is also a group homomorphism $1 + t R \pow{t} \to \widetilde K_1(R\pow{t})$,
and the composite 
\[1 + t R \pow{t} \to \widetilde K_1(R\pow{t}) \to W(R)\] 
is the canonical quotient map. This shows that $W(R)$ is a quotient of $\widetilde{K}_1(R\pow{t})$, and if $R$ is commutative the projection is even an isomorphism. But in the non-commutative case this is not quite the case: The map  $1 + t R \pow{t} \to \widetilde K_1(R\pow{t})$ descends to an isomorphism
\begin{equation}\label{equation:sheiham}
\frac{(1 + tR\pow{t})}{ 1+pqt \sim 1+qpt} \stackrel{\cong}{\longrightarrow}\widetilde{K}_1(R\pow{t}),
\end{equation}
where $p$ and $q$ are arbitrary power series over $R$, see \cite{Pajitnov, PajRan} and also \cite[Theorem B and Proposition 3.4]{Sheiham2}). The left hand quotient looks similar to our Definition \ref{def_Witt} but it is not: The quotient here is purely algebraic and in Definition \ref{def_Witt} we close the subgroups by the topology.

\begin{rem}\label{rem:completion}
We see from the above discussion that the reason why the determinant map $\widetilde{K}_1(R\pow{t}) \longrightarrow W(R)$ is not an isomorphism is that the algebraically defined $K$-theory group does not take the the $t$-adic topology on the power series ring into consideration.  One can define a completed version 
\[
\widetilde{K}_1(R\pow{t})^\wedge  \cong \underleftarrow{\lim}_n \widetilde K_1(R[t]/t^n) 
\]
which is then isomorphic to the Witt vectors $W(R)$. Note that the completion $\widetilde{K}_1(R\pow{t}) \to \widetilde{K}_1(R\pow{t})^\wedge$ is surjective, thus the non-completeness of $\widetilde{K}_1(R\pow{t})$ lies entirely in the fact that it is in general not separated. 
\end{rem}
\subsection{Rational Witt vectors}\label{sec_ratWitt}

In this section we will construct a version of rational Witt vectors $W^{\rat}(R)$ mapping to $W(R)$ for a non-commutative ring $R$ and see that the characteristic polynomial actually takes values in $W^{\rat}(R)$. Most of the results are due to Sheiham (\cite{Sheiham, Sheiham2}) but we translate them into a language compatible with the current paper. Finally we discuss a generalisation of a theorem of Almkvist to the non-commutative setting.

The rough idea for rational Witt vectors is to replace the power series ring $R\pow{t}$ in the definition of $W(R)$ by the polynomial ring $R[t]$. There are several differences between these two rings, the most important one for us is that in the power series ring, an element $p(t) \in R\pow{t}$ is a unit precisely if the element $p(0) \in R$ is a unit. This of course fails for the polynomial ring and we will have to force it universally in the process of defining the rational Witt vectors, i.e. we will consider a certain localisation $L_\varepsilon R[t]$. We first introduce this localisation abstractly.

\begin{lemma}\label{local}
Let $\varepsilon: A \to R$ be a surjective map of not necessarily commutative rings. Then the following are equivalent.
\begin{enumerate}
\item Any endomorphism $f: Q \to Q$ of a finitely generated projective $A$-module $Q$, for which $\varepsilon_*(f)$ is an isomorphism of $R$-modules, is itself an isomorphism of $A$-modules;
\item Any element $a \in A$, for which $\varepsilon(a)$ is a unit in $R$, is itself a unit in $A$;
\item Any element $a \in A$, for which $\varepsilon(a) = 1$, is a unit in $A$;
\item The kernel of $\varepsilon$ is contained in the Jacobson radical of $A$.
\end{enumerate}
\end{lemma}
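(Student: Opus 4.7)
The plan is to prove the cyclic chain of implications $(1)\Rightarrow(2)\Rightarrow(3)\Rightarrow(4)\Rightarrow(1)$. Three of the four implications are essentially formal; the real content lies in $(4)\Rightarrow(1)$, which will rest on Nakayama's lemma applied to a finitely generated projective module.

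For $(1)\Rightarrow(2)$, given $a\in A$ with $\varepsilon(a)\in R^\times$, take $Q=A$ and $f$ to be right multiplication by $a$; its $R$-reduction is an isomorphism, so by (1) so is $f$, which forces $a$ to be a unit. The implication $(2)\Rightarrow(3)$ is trivial. For $(3)\Rightarrow(4)$, I would use the standard characterisation that $x$ lies in the Jacobson radical iff $1-yxz$ is a (two-sided) unit for every $y,z\in A$. If $x\in\ker\varepsilon$, then for any $y,z$ we have $\varepsilon(1-yxz)=1$, and (3) shows $1-yxz$ is a unit. Hence $\ker\varepsilon\subseteq\mathrm{Jac}(A)$. (Strictly speaking (3) only gives two-sided units of the required form because any $a$ with $\varepsilon(a)=1$ is a unit; one could also deduce (2) from (3) by writing $\varepsilon(a)=u$, picking a preimage $b$ of $u^{-1}$, and observing that $ab$ and $ba$ then lie over $1$.)

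The main step is $(4)\Rightarrow(1)$. Let $I=\ker\varepsilon$ and let $f\colon Q\to Q$ be an endomorphism of a finitely generated projective $A$-module whose reduction $\bar f\colon Q/IQ\to Q/IQ$ is an isomorphism. Surjectivity of $f$ is immediate: the surjectivity of $\bar f$ gives $f(Q)+IQ=Q$, and Nakayama's lemma (valid because $Q$ is finitely generated and $I\subseteq\mathrm{Jac}(A)$) yields $f(Q)=Q$. For injectivity, let $K=\ker f$. Since $Q$ is projective and $f$ is a surjection $Q\twoheadrightarrow Q$, the short exact sequence $0\to K\to Q\to Q\to 0$ splits, realising $K$ as a direct summand of $Q$; in particular $K$ is itself finitely generated. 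Reducing mod $I$ and using that $\bar f$ is now known to be an isomorphism shows $K\subseteq IQ$. Writing $Q=K\oplus K'$ and projecting the inclusion $K\subseteq IQ=IK\oplus IK'$ onto the $K$ summand gives $K=IK$, and one final application of Nakayama's lemma to the finitely generated module $K$ yields $K=0$, completing the proof that $f$ is an isomorphism.

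The only point requiring a little care is the surjection-is-injection argument for $f$: it genuinely uses that $Q$ is projective (so that $K$ becomes a summand, hence finitely generated), together with the characterisation of the Jacobson radical as the largest ideal $J$ such that Nakayama holds for all finitely generated modules. Everything else is bookkeeping, so I would expect the write-up to be short.
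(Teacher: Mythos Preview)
Your proof is correct and follows the same cyclic chain $(1)\Rightarrow(2)\Rightarrow(3)\Rightarrow(4)\Rightarrow(1)$ as the paper, with Nakayama's lemma doing the work in $(4)\Rightarrow(1)$. The only difference is cosmetic: for injectivity the paper picks a section $s$ of the (now surjective) $f$, observes that $\varepsilon_*(s)$ is also an isomorphism, and reapplies the surjectivity argument to $s$, whereas you analyse $\ker f$ directly as a finitely generated summand and kill it with Nakayama---both are standard and equally short.
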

\begin{proof}
The implications $(1)  \Rightarrow (2) \Rightarrow (3) \Rightarrow (4)$ are clear. For $(4) \Rightarrow (1)$ we want to use the following version of Nakayama's lemma for non-commutative rings:
\begin{quote}
If a two-sided ideal $I \subseteq A$ is contained in the Jacobson radical and a finitely generated $A$-module $M$ is zero modulo $I$, then $M$ is zero. 
\end{quote}
Now assume $(4)$ holds and that $f: Q \to Q$ is a morphism as in $(1)$. We let $M$ be the cokernel of $f$ which is finitely generated and vanishes modulo $\ker(\varepsilon)$. Thus $M = 0$ and $f$ is surjective. Since $Q$ is projective we can choose a section $s$ of $f$. Then $\varepsilon_*(s)$ is also an isomorphism and repeating the argument for $s$ gives that $s$ is also surjective, thus $f$ an isomorphism. 
\end{proof}

Sheiham calls maps $A \to R$ as in  Lemma \ref{local} local maps. 
Note that the map $R\pow{t} \to R$ satisfies the equivalent conditions but $R[t] \to R$ does not. We want to form the universal `localisation' of $R[t]$ which does.

\begin{lemma}\label{Cohn_aug}
For every surjective map of rings $\varepsilon: A \to R$ there is an initial factorisation $A \to L_\varepsilon A \to R$ such that $L_\varepsilon A \to R$ is surjective and satisfies the conditions of Lemma \ref{local}.
\end{lemma}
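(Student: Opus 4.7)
The plan is to construct $L_\varepsilon A$ as a sequential colimit of rings, at each stage formally adjoining two-sided inverses to every element that maps to $1$ in $R$. The key subtlety (which dictates the iterative nature of the construction) is that inverting some elements can create new elements with image $1\in R$ that are not yet units, so a single round of localisation does not suffice.

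Concretely, set $A_0 := A$ and $\varepsilon_0 := \varepsilon$. Given a surjection $\varepsilon_n : A_n \to R$, let $\Sigma_n := \varepsilon_n^{-1}(1) \subseteq A_n$ and define $A_{n+1}$ to be the ring obtained from $A_n$ by freely adjoining a two-sided inverse $a^{-1}$ for every $a \in \Sigma_n$. This is the pushout of rings along the canonical inclusion of the free ring on generators $t_a$ into the free ring on generators $t_a, t_a^{-1}$ satisfying $t_a t_a^{-1} = t_a^{-1} t_a = 1$, the former mapping to $A_n$ by $t_a \mapsto a$. Since each $a \in \Sigma_n$ already maps to the unit $1 \in R$, the augmentation $\varepsilon_n$ extends uniquely to a surjection $\varepsilon_{n+1} : A_{n+1} \to R$. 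Now define $L_\varepsilon A := \colim_n A_n$ with the induced surjection $\varepsilon_\infty$ to $R$.

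To verify locality, take $x \in L_\varepsilon A$ with $\varepsilon_\infty(x) = 1$. The filtered colimit presents $x$ as the image of some $a \in A_n$; compatibility of the augmentations forces $\varepsilon_n(a) = 1$, so $a \in \Sigma_n$. Hence $a$ is invertible in $A_{n+1}$, and $x$ is invertible in $L_\varepsilon A$. This establishes condition~(3) of Lemma~\ref{local}, hence all the equivalent conditions. Surjectivity of $\varepsilon_\infty$ is inherited from $\varepsilon$.

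For the universal property, suppose $A \xrightarrow{f} B \xrightarrow{g} R$ is any factorisation with $g$ local. We build $R$-compatible extensions $f_n : A_n \to B$ by induction, starting from $f_0 = f$. Given $f_n$, for each $a \in \Sigma_n$ the image $f_n(a)$ satisfies $g(f_n(a)) = 1$, hence is a unit in $B$ by locality of $g$; the universal property of the pushout defining $A_{n+1}$ therefore produces a unique $f_{n+1} : A_{n+1} \to B$ extending $f_n$ and compatible with the augmentations. Passing to the colimit yields the unique factorisation $L_\varepsilon A \to B$, exhibiting $A \to L_\varepsilon A \to R$ as initial. The main conceptual point is that condition~(3) is elementwise, so every potential obstruction lives at a finite stage of the tower and is resolved at the next step.
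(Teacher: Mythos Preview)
Your proof is correct and follows essentially the same approach as the paper: both build $L_\varepsilon A$ as a sequential colimit of iterated localisations $A_{n+1} = A_n[\Sigma_n^{-1}]$ with $\Sigma_n = \varepsilon_n^{-1}(1)$, noting that a single pass does not suffice in the noncommutative setting. Your version is in fact more explicit than the paper's, which concludes with ``this ring has the desired properties by construction'' whereas you spell out both the verification of condition~(3) and the universal property.
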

\begin{proof}
We set $A_0 :=A$ and let $S_0 \subseteq A_0$ be the set of elements $s \in A_0$ such that $\varepsilon(s) = 1$. Then we form the localisation $A_1 := A_0[S_0^{-1}]$ and get a factorisation 
\[
A_0 \to A_1 \to R \ .
\]
Now $A_1 \to R$ does not necessarily satisfy the condition of Lemma \ref{local} since elements of the localisation $A_1$ might, in absence of any Ore condition, be arbitrary sums of words in $A$ and $S_0^{-1}$. We can repeat the procedure inductively to define $A_{n+1} := A_{n}[S_n^{-1}]$ with $S_n = \varepsilon_n^{-1}(1)$. This gives a tower
\[
A_0 \to A_1 \to A_2 \to \cdots
\]
of rings augmented over $R$ and we set $L_\varepsilon A := \colim A_i$. This ring has the desired properties by construction. 
\end{proof}
\begin{rem}
Ranicki and Sheiham use the Cohn localisation $\Sigma^{-1}A$ to construct $L_\varepsilon A$ where $\Sigma$ denotes the set of matrices over $A$ which become invertible under base-change to $R$. Then $\Sigma^{-1}A$ is the universal ring under $A$ over which the matrices in $\Sigma$ become isomorphisms. It turns out that, in contrast to our inductive procedure, a single iteration of this process is already enough to force the property that the morphism $\Sigma^{-1}A \to R$ satisfies the equivalent conditions of Lemma \ref{local}, see \cite[Section 3.1]{Sheiham}.
\end{rem}

For a ring $R$ we let $R[t] \to L_\varepsilon R[t]$ be the localisation of the polynomial ring $R[t]$ with its augmentation $\varepsilon= \ev_0$ as in Lemma \ref{Cohn_aug}.  Note that $t$ is still central in this ring and that for the map $\varepsilon: L_\varepsilon R[t] \to R$ the kernel is still generated by $t$ which follows from the fact that the short exact sequence 
\[
0\to R[t] \xto{t} R[t] \to R\to 0
\]
of $R[t]$-modules 
remains right exact after basechanging to $L_\varepsilon R[t]$ (and $R$ is, as an $R[t]$-module, already local, so $R\otimes_{R[t]} L_\varepsilon R[t]\cong R$).
\begin{defn}\label{def_ratWitt}
We define the rational Witt vectors of $R$ as the abelian group
\[
W^{\rat}(R) = \frac{(1+ t L_\varepsilon R[t] )^{\mathrm{ab}} }{1 + r p t \sim 1+ prt} 
\]
where $1+ t L_\varepsilon R[t] \subseteq L_\varepsilon R[t]$ has the group structure given by multiplication, and the relations run over all $r \in R$ and $p \in L_\varepsilon R[t]$. 
\end{defn}

\begin{rem}\leavevmode
\begin{enumerate}
\item By the definition of Witt vectors (and Lemma \ref{lem:otherrelations}(2)) there is a canonical map $W^{\rat}(R) \to W(R)$, which in the commutative case exhibits $W^{\rat}(R)$ as those power series in $R\pow{t}$ that can be written as a quotient of polynomials with constant term $1$. In the non-commutative case, however, it turns out that the map $W^{\rat}(R) \to W(R)$ is not necessarily injective as shown by Sheiham in \cite{Sheiham}. In general, the map still exhibits $W(R)$ as the completion of $W^{\rat}(R)$ with respect to the $t$-adic filtration.
\item
There are some slight variations of the relations that one can impose to get the same groups. For example one has
\[
W^{\rat}(R) = \frac{(1+ t L_\varepsilon R[t] ) }{1 +  pq t \sim 1+ qp t}
\]
    where $q$ and $p$ run through all elements in $L_\varepsilon R[t]$. Note that the commutators are contained in this subgroup (by \cite[Proposition 3.4]{Sheiham2}), so no abelianisation is needed. One also has
\[
W^{\rat}(R) = \frac{(1+ t L_\varepsilon R[t] ) }{1 +  pq \sim 1+ qp }
\]
where $p$ and $q$ run through all elements $L_\varepsilon R[t]$ such that $pq \in (t)$ and $qp \in (t)$. We have decided to  give the definition which is closest to our Definition \ref{def_Witt} of Witt vectors.

The fact that all these quotients agree can be seen as follows: 
    by \cite[Proposition 3.4]{Sheiham2}  the normal subgroup generated by $(1+pqt)(1+qpt)^{-1}$ for all $p,q\in L_\varepsilon R[t]$ agrees with the normal subgroup generated by all $(1+pq)(1+qp)^{-1}$ with $pq\in (t)$. We show that the normal subgroup generated by all $(1+pqt)(1+qpt)^{-1}$ agrees with the relations in Definition \ref{def_ratWitt}. It contains commutators (by \cite[Proposition 3.4]{Sheiham2}) and the elements where $p$ is homogeneous of degree $0$, so we only need to show that all $(1+pqt)(1+qpt)^{-1}$ are contained in the subgroup generated by commutators and the elements where $p$ is homogeneous of degree $0$, or equivalently, that in the abelianisation, the image of $(1+pqt)(1+qpt)^{-1}$ is contained in the subgroup generated by elements of the form $(1+p_0 q t)(1+q p_0 t)^{-1}$, with $p_0$ and $q_0$ homogeneous of degree $0$. To see this, observe that for $p$, $q$ with homogeneous degree $0$ components $p_0$, $q_0$:
 \begin{gather*}
 (1+pqt)(1-p_0 qt) = (1 + (p-p_0 - pqp_0 t)q t)\\
 (1+qpt)(1-qp_0 t) = (1 + q(p-p_0 - pqp_0 t)t),
 \end{gather*}
 and by Proposition 3.4(3) for $\zeta=0$ in \emph{loc. cit.}, the right hand sides agree in the abelianisation, so we have
 \[
 (1+pqt)(1+qpt)^{-1} = ((1-p_0 qt)(1-q p_0 t)^{-1})^{-1}
 \]
 in the abelianisation.
\end{enumerate}
\end{rem} 

The main reason to introduce rational Witt vectors here is that the characteristic element $\chi_f$ of an endomorphism $f: P \to P$, as defined in Definition \ref{char_el}, naturally lies in $W^{\rat}(R)$. More precisely we have the following result.
\begin{theorem}[Almkvist, Grayson, Ranicki, Sheiham]\label{thm_main_rat}
For every ring $R$ we have group isomorphisms 
\[
\xymatrix{
K_0^{\operatorname{cyc}}(R) \ar[r]^-\cong & \widetilde{K}_1(L_\varepsilon R[t]) \ar[r]_{\det}^\cong & W^{\rat}(R) 
}
\] 
where the first map sends a pair $[P,f]$ to the class of the automorphism $1 - ft: L_\varepsilon P[t] \to L_\varepsilon P[t]$ and the second map has the property that it sends the classes represented by elements in $1+ t L_\varepsilon R[t]$ (considered as an automorphism of the 1-dimensional module $L_\varepsilon R[t]$) to the class this element represents in $W^{\rat}(R)$.
\end{theorem}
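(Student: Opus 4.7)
The plan is to establish the two isomorphisms in turn. First I would verify that the assignment $\Phi\colon K_0^{\operatorname{cyc}}(R)\to \widetilde{K}_1(L_\varepsilon R[t])$, $[P,f]\mapsto [1-ft]$, is a well-defined homomorphism. By Lemma \ref{local} the endomorphism $1-ft$ of $L_\varepsilon P[t]$ is automatically an automorphism since it reduces to $\id$ modulo $t$, and it lies in the kernel of $\widetilde K_1(L_\varepsilon R[t])\to K_1(R)$ by construction. The zero endomorphism is sent to the identity, which is trivial in $\widetilde K_1$, and for a short exact sequence of endomorphisms the block upper-triangular factorisation used in the proof of part (4) of Lemma \ref{lem:charpolyproperties}---which only uses relations already present in $\widetilde K_1$---shows that $[1-f_2 t]=[1-f_1 t]+[1-f_3 t]$.

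For the determinant map $\widetilde K_1(L_\varepsilon R[t])\to W^{\rat}(R)$ I would imitate Proposition \ref{detK1toW}. The required rational analogue of Lemma \ref{lem_iso}---every f.g.\ projective $L_\varepsilon R[t]$-module $Q$ is isomorphic to $L_\varepsilon P[t]$ for $P=Q\otimes_{L_\varepsilon R[t]}R$---follows from Lemma \ref{local}: lifting a splitting yields a map $L_\varepsilon P[t]\to Q$ reducing to the identity, and any such map between f.g.\ projectives is an isomorphism (pick a lift of its inverse-mod-$t$ on each side to produce left and right inverses via condition (1) of Lemma \ref{local}). Combined with a rational version of the Morita map \eqref{map_end}---obtained by transcribing \S\ref{sec:Morita} to $W^{\rat}(-;-)$, where all the necessary structure (trace property, lax symmetric monoidal structure, ghosts) goes through with essentially identical arguments since the tensor algebra is replaced by polynomials---one obtains $\det(\alpha)\in W^{\rat}(R)$. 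The defining relations of $\widetilde K_1$ are preserved by the analogue of Lemma \ref{det_exact}, and by construction the map sends a $1{\times}1$ automorphism $1+tp(t)$ to its tautological class. The composite $K_0^{\operatorname{cyc}}(R)\to W^{\rat}(R)$ then computes $[P,f]\mapsto \chi_f$.

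To produce inverses, the map $W^{\rat}(R)\to \widetilde K_1(L_\varepsilon R[t])$ sending $1+tp(t)$ to its class as a $1{\times}1$ automorphism of $L_\varepsilon R[t]$ is well-defined by the Cohn-localisation analogue of \eqref{equation:sheiham}, which in turn follows from \cite[Proposition 3.4]{Sheiham2}. The main obstacle, however, is the inverse $\widetilde K_1(L_\varepsilon R[t])\to K_0^{\operatorname{cyc}}(R)$: one must show that, after stabilisation and up to the relations in $\widetilde K_1$, every automorphism $\alpha$ of $L_\varepsilon R[t]^n$ with $\varepsilon_*\alpha=\id$ is of the form $1-ft$ for some $f\in \Mat_n(R)$. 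Following Almkvist, Grayson, Ranicki and Sheiham (see in particular \cite{Sheiham, Sheiham2}), this is carried out by a Gauss-elimination procedure inside $L_\varepsilon R[t]$: elementary row and column operations, which are trivial in $\widetilde K_1$, reduce an arbitrary $\alpha$ to the canonical form $1-ft$, and one must then verify that the resulting class $[R^n,f]\in K_0^{\operatorname{cyc}}(R)$ is independent of the reduction steps, reduces the size of the matrix compatibly under stabilisation, and respects the $K_1$-relations. Once these four maps are in place, all composites agree with the identity on the topological generators (the classes of $[P,f]$, of $1-ft$, and of $1+tp(t)$ respectively), so the theorem follows.
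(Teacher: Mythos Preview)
The paper does not actually prove this theorem; its ``proof'' consists entirely of citations to Almkvist, Grayson, Ranicki and Sheiham. So there is no argument in the paper to compare your sketch against, and what you have written is already more than the paper attempts.

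That said, there is a genuine gap in your sketch. You propose to define the determinant $\widetilde K_1(L_\varepsilon R[t])\to W^{\rat}(R)$ by ``transcribing \S\ref{sec:Morita} to $W^{\rat}(-;-)$, where all the necessary structure (trace property, \ldots) goes through with essentially identical arguments''. This is precisely what the paper says it \emph{cannot} do: see Remark~\ref{rem_rat_trace} and the last paragraph of the introduction, where the authors state explicitly that they have not been able to establish the trace property for $W^{\rat}$ directly from its definition. The arguments of \S\ref{sec:Morita} rest on the fact that in the free case $\tlog$ is a homeomorphism onto its image (Proposition~\ref{prop:ghost}), which in turn relies on completeness of the filtration; for $W^{\rat}$ the map to the completed Witt vectors is not even injective in general (Sheiham), so you cannot borrow the structure by restriction either.

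In Sheiham's actual proof the determinant is \emph{defined} via the Gauss elimination you invoke only later for the inverse: one brings a matrix into upper-triangular form by elementary operations and multiplies the diagonal entries, and the substantial work is showing this is well-defined in $W^{\rat}(R)$. So your outline has the right ingredients but assigns them to the wrong step; the Morita/trace route to $\det$ is not available in the rational setting, and Gauss elimination has to do double duty.
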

\begin{proof}
For $R$ commutative the equivalences are due to Almkvist \cite{Almkvist} and Grayson \cite{Grayson}. In the non-commutative case they are shown in Ranicki \cite[Section 10,14]{Ranicki} and Sheiham \cite{Sheiham2}. 
\end{proof}

\begin{rem} 
We believe that there is a slightly incorrect definition in Ranicki's book: in Definition 14.7 and 14.10 of  \cite{Ranicki} the rational Witt vectors (of which our rational Witt vectors are a quotient) are defined as a subgroup of the Witt vectors. But by \cite{Sheiham} the inclusion is neither injective nor are the rational Witt vectors the group completion of $1 + t R[t]$. 
\end{rem}

Finally Sheiham \cite{Sheiham} defines the characteristic element as the composite
of the two maps of Theorem \ref{thm_main_rat}. This clearly maps to our characteristic element under the map $W^{\rat}(R) \to W(R)$. More generally, we can summarise the relation between the various characteristic polynomials and determinants defined in this paper and in \cite{Sheiham, Sheiham2} in the following commutative diagram:

\[
\xymatrix{
& K_0^{\operatorname{cyc}}(R) \ar[d]^-{\cong}\ar[rd]^-\cong & \\
(1+ t L_\varepsilon R[t] )\ar@{->>}[r]\ar[d] & \widetilde{K}_1(L_\varepsilon R[t] ) \ar[d] \ar[r]^\det_{\cong} & W^{\rat}(R)  \ar[d] \\
(1+t R\pow{t}) \ar@{->>}[r]  & \widetilde{K}_1(R \pow{t} )  \ar@{->>}[r]^{\det} & W(R)  
} 
\]
Here the composition  $K_0^{\operatorname{cyc}}(R) \to W(R)$ is the cyclic trace as defined in Definition \ref{cyclictrace}. The horizontal surjective maps in the diagram are in general not injective, but one can describe the kernels using the results of this section as well as \eqref{equation:sheiham} and Remark \ref{rem:completion}.
 In particular note that the lower determinant is not an isomorphism but rather a completion, as is the rightmost vertical map.

\subsection{Characteristic polynomials with coefficients}

In this section we will briefly explain how to generalize the constructions from \S\ref{Section: char polynomial}-\ref{sec_ratWitt} to a setting with coefficients in an $R$-bimodule $M$. The results are analogous to the results of the previous sections and we present them in the same order.

First  we treat the characteristic element. Let $P$ be a finitely generated projective right $R$-module and $M$ an $R$-bimodule. Then $\Hom_R(P, P\otimes_R M)$ is an $\End_R(P)$-bimodule isomorphic to $P\otimes_R M\otimes_R P^\vee$. We thus obtain a map
\[
\adjustbox{scale=0.98,center}{
\xymatrix@C=0pt{
W(\End_R(P); \Hom_R(P, P\otimes_R M))\ar@{}[r]|-{\cong}& W(\End_R(P); P\otimes_R M \otimes_R P^\vee)\ar@{}[r]|-{\cong}& W(R; P^\vee\otimes_{\End_R(P)} P \otimes_R M) \ar[d]^{(\ev\otimes M)_*}
\\
&& W(R;M)
}
}
\]
where the second isomorphism is from Proposition \ref{prop:traceinvariance}. The composite map is an isomorphism if $P$ is supported everywhere, e.g. if it admits a free non-trivial summand, see Lemma \ref{lemma_support}.

\begin{defn}\label{def_char_coefficients}
The characteristic element of an $R$-module map $f: P\to P\otimes_R M$ is  the image $\chi_f \in W(R;M)$ of $f$ under the map
\[
\chi :  \Hom_R(P, P\otimes_R M)\stackrel{\tau}{\longrightarrow}W(\End_R(P); \Hom_R(P, P\otimes_R M)) \longrightarrow W(R;M).
\]
\end{defn}

For $M=R$ this clearly reduces to Definition  \ref{char_el}. 

The analogue of  Lemma \ref{lem:charpolyproperties} also holds in this more general setting: Let $R,S$ be rings, $M$ an $R$-$S$-bimodule, $N$ a $S$-$R$-bimodule, $P$ a finitely generated projective $R$-module and $Q$ a finitely generated projective $S$-module. 
Given morphisms $f: P \to Q \otimes_S N$ and $g: Q \to P \otimes_R M$, we write $gf$ for the composite $P\to  Q \otimes_S N\to   P \otimes_R M \otimes_S N$ of $f$ and $g\otimes_SN$. 
\begin{lemma} \leavevmode
\label{lem_charelemwithcoeffproperties}
\begin{enumerate}
\item
In the situation above, the elements $\chi_{gf}$ and $\chi_{fg}$ correspond to each other under the trace isomorphism 
$W(R;  M\otimes_SN) \cong W(S; N \otimes_R M)$ of Proposition \ref{prop:traceinvariance}.
\item
Given a commutative diagram
\begin{equation}\label{diag_ext_coefficients}
\begin{tikzcd}
0 \rar& P_1 \rar\dar{f_1}& P_2 \rar\dar{f_2}& P_3 \rar\dar{f_3} \rar &  0\\
0 \rar& P_1 \otimes_R M \rar & P_2\otimes_R M \rar & P_3\otimes_R M \rar& 0
\end{tikzcd}
\end{equation}
with exact rows, we have $\chi_{f_2} = \chi_{f_1}+\chi_{f_3}$. 
\item
The $n$-th ghost component of the characteristic polynomial of $f: P\to P\otimes_RM$ is given by 
\[
\chi_f=\tr(f^n) \in M^{\circledcirc_R n}
\]
where the trace of the morphism $f^n: P \to P \otimes_R M^{\otimes_R n}$ is defined as the image of $f^n$ under the map
\[
\Hom_R(P, P \otimes_R  M^{\otimes_R n})  \stackrel{\cong}{\longleftarrow} P\otimes_R M^{\otimes_R n} \otimes_R P^\vee \xrightarrow{\ev}  M^{\otimes_R n} /[R,M^{\otimes_R n}] =  M^{\circledcirc_R n} \ .
\]
\end{enumerate}
\end{lemma}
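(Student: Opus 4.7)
My plan follows the pattern of Lemma \ref{lem:charpolyproperties} and Proposition \ref{prop:moritaiso}, now with coefficients. The key preliminary is the following coefficient analogue of Proposition \ref{prop:moritaiso}(\ref{prop:item1}): I will show that the map
\[
W(\End_R(P); \Hom_R(P, P \otimes_R M)) \longrightarrow W(R;M)
\]
from Definition \ref{def_char_coefficients} is, on the $n$-th ghost factor, induced by the composite
\[
\Hom_R(P, P \otimes_R M)^{\otimes_{\End_R(P)} n} \longrightarrow \Hom_R(P, P \otimes_R M^{\otimes_R n}) \xrightarrow{\tr} M^{\circledcirc_R n},
\]
where the first arrow identifies the tensor over $\End_R(P)$ with iterated composition of $R$-module maps (via the inner $n-1$ evaluations $P^\vee \otimes_{\End_R(P)} P \to R$), and the second is the Hattori--Stallings trace. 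This is verified by the same diagram chase as in Proposition \ref{prop:moritaiso}, invoking naturality of $\tlog$ together with Proposition \ref{prop:traceinvariance}. Part (3) is then immediate: by Proposition \ref{prop:tlogspecialvalues}(2) the $n$-th ghost of $\tau(f) = 1 - ft$ is $\tr_e^{C_n}(f^{\otimes n})$, which the above composite sends to $\tr_e^{C_n}(\tr(f^n))$.

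For part (1), my strategy is to apply the trace property of Proposition \ref{prop:traceinvariance} at two different levels and compare. Viewing $f$ as an element of the $\End_S(Q)$-$\End_R(P)$-bimodule $Q \otimes_S N \otimes_R P^\vee$ and $g$ as an element of the $\End_R(P)$-$\End_S(Q)$-bimodule $P \otimes_R M \otimes_S Q^\vee$, the compositions $gf$ and $fg$ correspond to the two cyclic tensor products of these two bimodules. Proposition \ref{prop:traceinvariance}, applied to the cycle of rings $(\End_R(P), \End_S(Q))$ with these bimodules, yields an isomorphism sending $\tau(gf)$ to $\tau(fg)$. I then want to show that the square formed by this upper isomorphism, the two canonical maps of Definition \ref{def_char_coefficients}, and the lower trace isomorphism $W(R; M \otimes_S N) \cong W(S; N \otimes_R M)$ commutes. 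The main obstacle will be this commutativity verification; I would reduce it by a free resolution argument (cf. the proofs of Propositions \ref{prop:weilaction} and \ref{prop:traceinvariance}) to the case where $\tlog$ is an embedding, where the first paragraph reduces everything to a direct calculation with cyclic shifts on ghost components.

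For part (2), I will adapt the block matrix argument proving Lemma \ref{lem:charpolyproperties}(4). A splitting of the top exact row of \eqref{diag_ext_coefficients} gives $P_2 \cong P_1 \oplus P_3$; the commutativity of the diagram forces $f_2$ to have block form
\[
f_2 = \begin{pmatrix} f_1 & \rho \\ 0 & f_3 \end{pmatrix}
\]
for some $\rho : P_3 \to P_1 \otimes_R M$. The relation $\tau(ab) = \tau(ba)$ valid in $W(\End_R(P_1 \oplus P_3); \Hom_R(P_1 \oplus P_3, (P_1 \oplus P_3) \otimes_R M))$ kills the off-diagonal contribution, exactly as in the coefficient-free proof, reducing $\chi_{f_2}$ to the split case $\chi_{f_1 \oplus f_3}$. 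The split case follows from the coefficient analogue of Proposition \ref{prop:moritaiso}(2) together with Lemma \ref{lem:products}, giving $\chi_{f_1 \oplus f_3} = \chi_{f_1} + \chi_{f_3}$. No new ideas beyond those of the coefficient-free case are needed.
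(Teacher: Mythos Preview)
Your proposal is correct and follows essentially the same approach as the paper for all three parts. For part (2) you reproduce the block-matrix argument exactly, and for part (3) your preliminary ghost computation is precisely the ``unwinding of definitions'' the paper invokes.

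For part (1) there is a small inaccuracy worth flagging. You speak of an ``upper isomorphism'' obtained from Proposition~\ref{prop:traceinvariance} sending $\tau(gf)$ to $\tau(fg)$, but the trace isomorphism for the cycle $(\End_R(P),\End_S(Q))$ relates the Witt groups with coefficients in the \emph{tensor products} $(P\otimes_R M\otimes_S Q^\vee)\otimes_{\End_S(Q)}(Q\otimes_S N\otimes_R P^\vee)$ and its cyclic shift; it sends $\tau(g\otimes f)$ to $\tau(f\otimes g)$. Passing from these to $\tau(gf)$ and $\tau(fg)$ requires the inner evaluation maps $Q^\vee\otimes_{\End_S(Q)}Q\to S$ and $P^\vee\otimes_{\End_R(P)}P\to R$, which are not isomorphisms unless $Q$ and $P$ are supported everywhere. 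The paper's framing avoids this by starting from the single element $\tau(g\otimes f)$ in the larger Witt group and mapping it two ways via evaluation maps and trace isomorphisms; the claim then follows from naturality of the trace isomorphisms under these bimodule maps, with no need for the free-resolution/ghost reduction you propose. Your plan would still work once you reorganise the diagram this way, but the paper's route is shorter.
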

\begin{proof}
  For the first statement, consider that the element represented by $\tau(g\otimes f)$ in 
\[
W(\End_R(P); P\otimes_R M\otimes_S Q^\vee\otimes_{\End_S(Q)} Q\otimes_S N\otimes_R P^\vee)
\] 
maps to the image of $gf$ in $W(\End_R(P); P\otimes_R M\otimes_S N\otimes_R P^\vee)$ and to the image of $fg$ in $W(\End_S(Q); Q\otimes_S N\otimes_R M \otimes_S Q^\vee)$ under suitable evaluation maps and trace isomorphisms. We obtain $\chi(gf)$ and $\chi(fg)$ by further application of trace isomorphisms and evaluation maps, and one easily obtains the claim from naturality of the trace isomorphisms.

The second proof proceeds analogously to Lemma \ref{lem:charpolyproperties}, and we also spell this out explicitly in the strictly more general situation of determinants, see Proposition \ref{prop_determinant_additivity} below. 

The third statement follows from unwinding the definitions, using the description of the ghost map on Teichm\"uller elements.
\end{proof}

Now we give the analogue for determinants with coefficients. We recall from Definition \ref{defn_tensoralgebra} that $\widehat{T}(R;M)$  denotes the completed tensor algebra of $M$ over $R$. We let $Q$ be a finitely generated, projective $\widehat{T}(R;M)$-module. Similarly to Lemma \ref{lem_iso} one shows that $Q$ is up to (non-canonical) isomorphism of the form $P \otimes_R  \widehat{T}(R;M)$ where $P$ is a finitely generated, projective $R$-module. More precisely $P$ is the basechange $\varepsilon_*Q$ along the augmentation map $\varepsilon: \widehat{T}(R;M) \to R$. 

\begin{defn}
A $\widehat{T}(R;M)$-linear endomorphism $f: Q \to Q$ is called special if $\varepsilon_*(f) = \id$. We denote the subset of those by 
\[
\SEnd_{\widehat{T}(R;M)}(Q) \subseteq \End_{\widehat{T}(R;M)}(Q) \ .
\]
\end{defn}

Now for a given automorphism $f: Q \to Q$ we choose an isomorphism $Q \cong P\otimes_R  \widehat{T}(R;M)$ and want to define the determinant of $f$ using this isomorphism. We shall then see that it is independent of the chosen isomorphism, similarly to the case without coefficients. We do this under the additional assumption that the module $P$ is supported everywhere, see Definition \ref{defn_supported}. This is not really a restriction as we can always replace $P$ by $P \oplus R$, which is supported everywhere by Lemma \ref{lemma_support} and we will later see that the determinant is a stable invariant.

\begin{lemma}
\label{lem_coefficients_isomorphisms}
Let $P$ a finitely generated projective right $R$-module and $M$ a $R$-bimodule. 
If $P$ is supported everywhere then we have canonical ring isomorphisms
\begin{gather*}
\End_{\widehat{T}(R;M)}(P\otimes_R \widehat{T}(R;M)) \cong \widehat{T}(\End_R(P); P\otimes_R M\otimes_R P^{\vee})\\
\SEnd_{\widehat{T}(R;M)}\left(P\otimes_R \widehat{T}(R;M)\right) \cong  \widehat{S}(\End_R(P);P \otimes_R M \otimes_R P^\vee)
\end{gather*}
\end{lemma}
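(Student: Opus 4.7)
My strategy is to identify both sides, factor by factor in the product decomposition, by a combination of the tensor--hom adjunction and the defining property of ``supported everywhere.''

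First I would rewrite the left-hand side using extension of scalars: since $P\otimes_R\widehat{T}(R;M)$ is obtained from $P$ by extending scalars along the inclusion $R\hookrightarrow \widehat{T}(R;M)$, the adjunction gives
\[
\End_{\widehat{T}(R;M)}(P\otimes_R\widehat{T}(R;M))\cong \Hom_R(P,\,P\otimes_R\widehat{T}(R;M)).
\]
Because $P$ is finitely generated projective, $\Hom_R(P,-)$ takes the product $\widehat{T}(R;M)=\prod_n M^{\otimes_R n}$ to $\prod_n\Hom_R(P,P\otimes_R M^{\otimes_R n})$, and the natural map $P\otimes_R M^{\otimes_R n}\otimes_R P^\vee\to \Hom_R(P,P\otimes_R M^{\otimes_R n})$ is an isomorphism (this is the usual identification, checked on $P=R$ and extended by retracts). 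So the left-hand side is naturally $\prod_{n\geq 0}P\otimes_R M^{\otimes_R n}\otimes_R P^\vee$.

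Next I would identify the right-hand side similarly. Setting $E=\End_R(P)$, by definition $\widehat{T}(E;P\otimes_R M\otimes_R P^\vee)=\prod_{n\geq 0}(P\otimes_R M\otimes_R P^\vee)^{\otimes_E n}$. The assumption that $P$ is supported everywhere is, by Lemma \ref{lemma_support}, exactly that the evaluation $P^\vee\otimes_E P\to R$ is an isomorphism; combined with the standard isomorphism $P\otimes_R P^\vee\cong E$ (valid for any f.g.p.\ $P$), an iterated telescoping collapses all intermediate factors and yields an isomorphism $(P\otimes_R M\otimes_R P^\vee)^{\otimes_E n}\cong P\otimes_R M^{\otimes_R n}\otimes_R P^\vee$ for every $n\geq 0$. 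Taking products term-by-term gives an isomorphism of abelian groups between the two sides.

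The key step, and the only one where care is needed, is checking that this bijection is a ring homomorphism. Multiplication on the left is composition of $\widehat{T}(R;M)$-linear endomorphisms, while multiplication on the right is concatenation in the tensor algebra over $E$. A direct unwinding shows that the elementary tensor $(p_1\otimes m_1\otimes\phi_1)\otimes_E(p_2\otimes m_2\otimes\phi_2)$ collapses, under the telescoping isomorphism, to $p_1\otimes m_1\phi_1(p_2)m_2\otimes\phi_2$, and that this matches the $\widehat{T}$-linear composition of the endomorphisms $q\mapsto p_i\otimes m_i\phi_i(q)$. The bookkeeping of the $R$-bimodule structures on $M^{\otimes_R n}$ and the correct placement of the ``collapsed'' copies of $R=P^\vee\otimes_E P$ is where I expect most of the effort to go, but no genuine obstacle arises once the supported-everywhere hypothesis is used to justify the collapses.

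Finally, the second isomorphism is a formal consequence of the first. Under the identification above, the augmentation $\varepsilon:\widehat{T}(R;M)\to R$ restricts an endomorphism to its degree-$0$ component, which under the telescoping corresponds to the projection $\widehat{T}(E;P\otimes_R M\otimes_R P^\vee)\to E$, sending a power series to its constant term. Hence an endomorphism is special (i.e.\ reduces to $\id_P$ modulo $\varepsilon$) if and only if its image has constant term $1\in E$, which is exactly the defining condition of $\widehat{S}(E;P\otimes_R M\otimes_R P^\vee)$. So the first isomorphism restricts to the claimed second one.
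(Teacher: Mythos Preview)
Your proposal is correct and follows essentially the same route as the paper: the adjunction identification $\End_{\widehat{T}(R;M)}(P\otimes_R\widehat{T}(R;M))\cong \Hom_R(P,P\otimes_R\widehat{T}(R;M))\cong\prod_{n\geq 0} P\otimes_R M^{\otimes_R n}\otimes_R P^\vee$, followed by the telescoping $(P\otimes_R M\otimes_R P^\vee)^{\otimes_E n}\cong P\otimes_R M^{\otimes_R n}\otimes_R P^\vee$ from the supported-everywhere hypothesis, and then the observation that special endomorphisms correspond to constant term $1$. Your write-up is more explicit about the ring-structure check and the role of $P^\vee\otimes_E P\cong R$, but the argument is the same.
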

\begin{proof}
For the first isomorphism, observe that 
\[
\End_{\widehat{T}(R;M)}(P\otimes_R \widehat{T}(R;M)) \cong \Hom_R(P,P\otimes_R \widehat{T}(R;M)) \cong \prod_{n\geq 0} P\otimes_R M^{\otimes_R n} \otimes_R P^{\vee},
\]
as $\End_R(P)$-bimodules. Since $P$ is supported everywhere, we can write $P\otimes_R M^{\otimes_R n}\otimes_R P^\vee \cong (P\otimes_R M\otimes_R P^{\vee})^{\otimes_{\End_R(P)} n}$, so we get
\[
\End_{\widehat{T}(R;M)}(P\otimes_R \widehat{T}(R;M)) \cong \widehat{T}(\End_R(P); P\otimes_R M\otimes_R P^{\vee}),
\]
and one sees directly that this isomorphism maps $\SEnd$ on the left isomorphically to $\widehat{S}$ on the right.
\end{proof}

\begin{defn}
For $P$ supported everywhere, we define the determinant as the composite
  \begin{align*}
    \det: \SEnd_{\widehat{T}(R;M)}\left(P\otimes_R \widehat{T}(R;M)\right) \cong{} & \widehat{S}(\End_R(P);P \otimes_R M \otimes_R P^\vee) \\
    \to{} & W(\End_R(P);P\otimes_R M\otimes_R P^\vee) \cong W(R; M),
  \end{align*}
where the last isomorphism is the trace property isomorphism from Proposition \ref{prop:traceinvariance}.
\end{defn}

\begin{lemma}
  \label{lem:det_conj}
  The determinant is conjugation invariant, i.e. for $P$ supported everywhere and any automorphism $\alpha: P\otimes_R \widehat{T}(R;M) \to P\otimes_R \widehat{T}(R;M) $ and any element $f\in \SEnd_{\widehat{T}(R;M)}$, we have $\det(\alpha f \alpha^{-1})=\det(f)$.
\end{lemma}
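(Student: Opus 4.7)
The plan is to transfer the problem, via the ring isomorphisms of Lemma \ref{lem_coefficients_isomorphisms}, from endomorphisms of $P\otimes_R\widehat{T}(R;M)$ to elements of the (completed) tensor algebra $\widehat{T}(E;M')$, where I abbreviate $E:=\End_R(P)$ and $M':=P\otimes_R M\otimes_R P^\vee$. Under this isomorphism, $\SEnd$ corresponds to the special units $\widehat{S}(E;M')$, $f$ corresponds to some $u = 1-a$ with $a\in\widehat{T}(E;M')$ of trivial constant term, and the automorphism $\alpha$ corresponds to a (not necessarily special) unit $b\in\widehat{T}(E;M')$, i.e. one whose constant term in $E$ is invertible. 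Since these isomorphisms are of rings, conjugation is preserved: $\alpha f\alpha^{-1}$ corresponds to $bub^{-1}=1-bab^{-1}$. The composition defining $\det$ then factors through $W(E;M')$ via the canonical quotient $\widehat{S}(E;M')\to W(E;M')$, followed by the trace property isomorphism of Proposition \ref{prop:traceinvariance}, which is a natural isomorphism and hence plays no further role. So it suffices to show that $(1-bab^{-1})$ and $(1-a)$ represent the same class in $W(E;M')$.

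For this I will invoke Lemma \ref{lem:otherrelations}(4), which says that in $W(E;M')$ one has the relation $(1-xy)=(1-yx)$ whenever at least one of $x,y\in\widehat{T}(E;M')$ has trivial constant term. I apply this with $x=ba$ and $y=b^{-1}$: the element $x=ba$ lies in $\widehat{T}(E;M')$ and has trivial constant term (since $a$ does), while $y=b^{-1}$ is an arbitrary element of $\widehat{T}(E;M')$; both $(ba)b^{-1}=bab^{-1}$ and $b^{-1}(ba)=a$ have trivial constant term, so both $1-xy$ and $1-yx$ are genuine special units. The relation then gives
\[
(1-bab^{-1}) = (1-(ba)b^{-1}) = (1-b^{-1}(ba)) = (1-a)
\]
in $W(E;M')$, which is exactly what we needed.

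The only subtle point worth flagging is the asymmetry: $b$ need not be special, only a unit, so one cannot directly apply the versions of the relation phrased for special units. This is why Lemma \ref{lem:otherrelations}(4) is the right tool, as it only requires one factor to have trivial constant term. Everything else is a formal consequence of the setup, and I expect no serious obstacle beyond keeping track of constant terms and verifying that the claimed products do land in $\widehat{S}(E;M')$ rather than merely in $\widehat{T}(E;M')$.
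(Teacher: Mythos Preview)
Your proof is correct and follows essentially the same approach as the paper: transfer via Lemma~\ref{lem_coefficients_isomorphisms} to $\widehat{T}(E;M')$, write $f=1-a$ with $a$ of positive filtration, and then apply the relation $(1-xy)=(1-yx)$ from Lemma~\ref{lem:otherrelations}(4) with $x=ba$ and $y=b^{-1}$ (the paper does this with the sign convention $f=1+g$ and appeals to the analogous Lemma~\ref{lem_det_relations}, but the substance is identical). Your explicit remark that only one factor needs trivial constant term is exactly the point.
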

\begin{proof}
We can consider $\alpha$ as an element of $\widehat{T}(\End_R(P);P\otimes_R M \otimes_R P^\vee)$ and $f$ as an element of $\widehat{S}(\End_R(P); P\otimes_RM\otimes_R P^\vee)$ by Lemma \ref{lem_coefficients_isomorphisms}.

Now we proceed analogously to the proof of Lemma \ref{lem_det_relations}, writing $f=1+g$ with $g$ of positive filtration, and using the relation
$
(1 + \alpha g \alpha^{-1}) = (1 + g)
$.
\end{proof} 

Note that conjugation invariance allows us to obtain a well-defined notion of determinant for any special endomorphism $f: Q\to Q$ of a finitely generated projective $\widehat{T}(R;M)$-module $Q$, provided the base-change $Q\otimes_{\widehat{T}(R;M)} R$ is supported everywhere.

\begin{prop}
\label{prop_determinant_additivity}
For a short exact sequence of special endomorphisms of finitely generated, projective, everywhere supported $\widehat{T}(R;M)$-modules
\[
\begin{tikzcd}
0\rar & Q_1 \dar{f_1} \rar & Q_2 \dar{f_2}\rar & Q_3 \dar{f_3}\rar & 0\\
0\rar & Q_1 \rar & Q_2 \rar& Q_3\rar & 0
\end{tikzcd}
\]
we have that $\det(f_2) = \det(f_1) + \det(f_3)$.
\end{prop}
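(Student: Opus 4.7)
The plan is to adapt the proofs of Lemma \ref{lem:charpolyproperties}(4) and Lemma \ref{det_exact} to the bimodule setting, replacing matrix-algebra manipulations by computations inside the completed tensor algebra provided by Lemma \ref{lem_coefficients_isomorphisms}.

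First I would split the exact sequence: since $Q_3$ is projective over $\widehat{T}(R;M)$, a $\widehat{T}(R;M)$-linear section $Q_3 \to Q_2$ exists and yields an identification $Q_2 \cong Q_1 \oplus Q_3$. Writing $P_i := \varepsilon_*(Q_i)$, the section can be arranged to reduce modulo the augmentation ideal to a splitting $P_2 \cong P_1 \oplus P_3$, and then $Q_i \cong P_i \otimes_R \widehat{T}(R;M)$ compatibly. Since each $P_i$ is supported everywhere, so is $P_2$. Under this identification, $f_2$ takes the block form
\[
f_2 = \begin{pmatrix} f_1 & \rho \\ 0 & f_3 \end{pmatrix}
\]
for some $\widehat{T}(R;M)$-linear $\rho : Q_3 \to Q_1$. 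Via Lemma \ref{lem_coefficients_isomorphisms} I then work inside $\widehat{T}(\End_R(P_2);\, P_2 \otimes_R M \otimes_R P_2^{\vee})$.

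Second, because $f_3$ is a special unit (hence invertible in the completed tensor algebra), I factor
\[
f_2 = \begin{pmatrix} 1 & \rho f_3^{-1} \\ 0 & 1 \end{pmatrix} \cdot \begin{pmatrix} f_1 & 0 \\ 0 & f_3 \end{pmatrix}.
\]
Set $u := \begin{pmatrix} 0 & \rho f_3^{-1} \\ 0 & 0 \end{pmatrix}$, which has trivial constant term since $f_2$ is special and hence $\rho$ reduces to zero. I apply Lemma \ref{lem:otherrelations}(4) with $f = -u$ and $g = e_3$, the degree-$0$ idempotent in $\End_R(P_2)$ projecting onto $P_3$. Translating the matrix product into the tensor algebra: the leftmost $P_2$ of $u$ lies in $P_1$ so $e_3 \cdot u = 0$, while the rightmost $P_2^{\vee}$ of $u$ lies in $P_3^{\vee}$ so $u \cdot e_3 = u$. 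Thus $fg = -u$ and $gf = 0$, giving $(1-fg)(1-gf)^{-1} = 1+u$, which is trivial in $W(\End_R(P_2);\, P_2 \otimes_R M \otimes_R P_2^{\vee})$. This crucially exploits that Lemma \ref{lem:otherrelations}(4) allows non-homogeneous $f,g$. Further splitting the diagonal factor as $\begin{pmatrix} f_1 & 0 \\ 0 & 1 \end{pmatrix} \cdot \begin{pmatrix} 1 & 0 \\ 0 & f_3 \end{pmatrix}$ then yields $\tau(f_2) = \tau(f_1 \oplus 1) + \tau(1 \oplus f_3)$ in the additively written group $W(\End_R(P_2);\, P_2 \otimes_R M \otimes_R P_2^{\vee})$.

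Third, it remains to identify $\tau(f_1 \oplus 1)$ with $\det(f_1)$ and $\tau(1 \oplus f_3)$ with $\det(f_3)$ under the trace isomorphism and evaluation that together yield $W(R;M)$. For this I use that the block-diagonal inclusion $\End_R(P_1) \times \End_R(P_3) \to \End_R(P_2)$ is a genuine ring homomorphism, and that together with the diagonal bimodule inclusion $(P_1 \otimes_R M \otimes_R P_1^{\vee}) \times (P_3 \otimes_R M \otimes_R P_3^{\vee}) \hookrightarrow P_2 \otimes_R M \otimes_R P_2^{\vee}$ and Lemma \ref{lem:products}, one obtains a commutative diagram comparing the two paths out of $W(\End_R(P_1);\,\ldots) \oplus W(\End_R(P_3);\,\ldots)$ down to $W(R;M)$. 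This is the coefficient analogue of Proposition \ref{prop:moritaiso}(2) and is the main obstacle: while morally obvious by naturality of the trace isomorphism of Proposition \ref{prop:traceinvariance} and of the evaluation, checking it rigorously requires unwinding how the cyclic shift interacts with the block decomposition, in particular verifying that the cross terms $P_i \otimes_R M \otimes_R P_j^{\vee}$ with $i \neq j$ contribute trivially to the image of each summand $\tau(f_i \oplus 1)$ after evaluation.
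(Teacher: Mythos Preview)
Your proposal is correct and follows essentially the same route as the paper. The paper's proof is much terser: after splitting to $Q_2\cong (P_1\oplus P_3)\otimes_R \widehat{T}(R;M)$ and observing the block upper-triangular form of $f_2$ inside $\widehat{S}(\End_R(P_1\oplus P_3);(P_1\oplus P_3)\otimes_R M\otimes_R (P_1\oplus P_3)^\vee)$, it simply says ``use the same argument as in the proof of Lemma \ref{lem:charpolyproperties}, using the inhomogeneous relations from Lemma \ref{lem:otherrelations}''. What you call the ``main obstacle'' --- the coefficient analogue of Proposition \ref{prop:moritaiso}(2) --- is exactly the split-case input to that earlier argument, and the paper leaves its verification implicit. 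Your plan for it (factor through $\End_R(P_1)\times\End_R(P_3)$ via Lemma \ref{lem:products} and use naturality of the trace isomorphism plus evaluation) is the right one and goes through without surprises; the cross terms $P_i\otimes_R M\otimes_R P_j^\vee$ for $i\neq j$ simply do not appear in the image of the block-diagonal inclusion, so there is nothing delicate to check there. One cosmetic difference: the paper (following Lemma \ref{lem:charpolyproperties}) uses the three-factor decomposition $\tau(0\oplus f_3)\cdot\tau(\rho)\cdot\tau(f_1\oplus 0)$ rather than your two-factor $\begin{pmatrix}1&\rho f_3^{-1}\\0&1\end{pmatrix}\begin{pmatrix}f_1&0\\0&f_3\end{pmatrix}$, but both kill the off-diagonal piece via the same relation $(1-fg)\sim(1-gf)$ with one factor an idempotent of degree zero.
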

\begin{proof}
First, we can choose isomorphisms $Q_1 \cong P_1\otimes_R \widehat{T}(R;M)$ and  $Q_3 \cong P_3\otimes_R \widehat{T}(R;M)$ and split the exact sequence to reduce the situation to a diagram
\begin{equation}
\label{eq_diagram_coefficients_additivity}
\begin{tikzcd}[column sep=0.5cm]
0\rar & P_1\otimes_R \widehat{T}(R;M) \dar{f_1} \rar & P_1\otimes_R \widehat{T}(R;M)\oplus P_3\otimes_R \widehat{T}(R;M) \dar{f_2}\rar & P_3\otimes_R \widehat{T}(R;M) \dar{f_3}\rar & 0\\
0\rar & P_1\otimes_R \widehat{T}(R;M) \rar & P_1\otimes_R \widehat{T}(R;M)\oplus P_3\otimes_R \widehat{T}(R;M) \rar& P_3\otimes_R \widehat{T}(R;M)\rar & 0
\end{tikzcd}
\end{equation}
We can now consider $f_2$ as element of $\widehat{S}(\End_R(P_1\oplus P_3); (P_1 \oplus P_3) \otimes_R M \otimes_R (P_1 \oplus P_3)^\vee)$ using Lemma \ref{lem_coefficients_isomorphisms}, i.e. as an element of
\[
\prod_{n\geq 1} (P_1 \oplus P_3) \otimes_R M^{\otimes_R n} \otimes_R (P_1 \oplus P_3)^\vee.
\]
This splits additively into four factors of the form $\prod_{n\geq 1} P_i \otimes_R M^{\otimes_R n} \otimes_R P_j^\vee$, with $i,j\in \{1,3\}$, which one should regard as a block matrix decomposition of $f_2$ like in the proof of the additivity statement of Lemma \ref{lem:charpolyproperties}. Commutativity of the diagram \eqref{eq_diagram_coefficients_additivity} translates to the fact that the coordinates of $f_2$ in $\prod_{n\geq 1} P_i \otimes_R M^{\otimes_R n} \otimes_R P_i^\vee$ are $f_i$ for $i=0,3$, and that the coordinate of $f_2$  in $\prod_{n\geq 1} P_3 \otimes_R M^{\otimes_R n} \otimes_R P_1^\vee$ vanishes, i.e. $f_2$ is ``upper triangular''. Now we can use the same argument as in the proof of Lemma \ref{lem:charpolyproperties} to finish the proof, using the inhomogeneous relations from Lemma \ref{lem:otherrelations}.
\end{proof}

\begin{rem}
  Proposition \ref{prop_determinant_additivity} shows in particular that the determinant of a special endomorphism $f$ of $Q$ does not change if we stabilize it by passing to $f\oplus\id: Q\oplus \widehat{T}(R;M)\to Q\oplus \widehat{T}(R;M)$. So we can extend the definition to special endomorphisms of all finitely generated projective modules, not necessarily with the property that they are supported everywhere, preserving the properties from Lemma \ref{lem:det_conj} and Proposition \ref{prop_determinant_additivity}.
\end{rem}

Finally we want to discuss rational Witt vectors with coefficients. For an $R$-bimodule $M$ let 
\[
T(R; M) := \bigoplus_{n \geq 0} M^{\otimes_R n}
\]
 be the tensor algebra. This admits an augmentation $\varepsilon: T(R;M) \to R$ and we let $L_\varepsilon T(R;M) $ be the localisation as in Lemma \ref{Cohn_aug} which comes by definition with an augmentation $L_\varepsilon T(R;M) \to R$. We consider the subset
\[
S(R;M) \subseteq L_\varepsilon T(R;M)
\]
given by those elements in $L_\varepsilon T(R;M)$ which lie over $1 \in R$. 

\begin{defn}
The rational Witt vectors of $R$ with coefficients in $M$ are given by the abelian group
\[
W^{\rat}(R; M) := \frac{S(R;M)^{ab}}{1 + r p \sim 1+ pr} 
\]
where $r \in R$ and $p \in \ker(L_\varepsilon T(R;M) \to R)$.
\end{defn}
There is an obvious map $W^{\rat}(R;M) \to W(R;M)$ obtained from the map 
$S(R;M) \to \widehat{S}(R;M)$. The main result of \cite{Sheiham2} is that there is an isomorphism
\[
\det: \widetilde{K}_1(L_\varepsilon T(R;M) ) \xto{\cong} W^{\rat}(R;M) 
\]
induced by a determinant map. This map is defined by a similar strategy to the one explained in Remark \ref{Remark: non-commutative det}, i.e. by bringing matrices into upper triangular form using the Gauss-algorithm and then multiplying the diagonal entries. Part of the proof is to show that this is well-defined as an element of the rational Witt vectors.

There is also a version of the commutative diagram from the end of \S\ref{sec_ratWitt} with coefficients, as follows:
\[
\begin{tikzcd}
& K_0^{\operatorname{cyc}}(R;M) \dar\ar[rd] & \\
S(R;M)\rar[two heads]\dar & \widetilde{K}_1(L_\varepsilon T(R;M) ) \dar \rar["\det","\cong"'] & W^{\rat}(R;M)  \dar \\
\widehat{S}(R;M) \rar[two heads]  & \widetilde{K}_1(\widehat{T}(R;M) )  \rar["\det",two heads] & W(R;M)  
\end{tikzcd}  \ .
\]
Here $K_0^{\operatorname{cyc}}(R;M)$ refers to the group completion of the monoid of isomorphism classes of pairs $(P,f)$ with $P$ a finitely generated, projective $R$-module and $f$ an ``endomorphism with coefficients'' $f : P\to P\otimes_R M$, modulo pairs of the form $(P,0)$ and the relation that $[P_2,f_2] = [P_1,f_1] + [P_3,f_3]$ whenever we have an extension as in Lemma \ref{lem_charelemwithcoeffproperties}. The vertical map to $\widetilde{K}_1(L_\varepsilon T(R;M))$ is defined by sending $(P,f)$ to the automorphism $(1+f)$ of $P\otimes_R L_\varepsilon T(R;M)$, and the composite down to $W(R;M)$ can therefore be identified with the characteristic element $[P,f]\mapsto \chi_f$ (as in Definition \ref{def_char_coefficients}).

\begin{rem}\label{rem_rat_trace}
It will be a consequence of forthcoming work of the second and third author that the upper vertical map $K_0^{\operatorname{cyc}}(R;M) \to  \widetilde{K}_1(L_\varepsilon T(R;M))$  in the diagram is also an isomorphism and thus also the diagonal map  
$K_0^{\operatorname{cyc}}(R;M) \to W^{\rat}(R;M)$. Using this result one can deduce that $W^{\rat}(R; M)$ has the trace property, i.e. that there are isomorphisms
\begin{equation}\label{trace_rat}
W^{\rat}(R; M \otimes_S N) \cong W^{\rat}(S; N \otimes_R M)
\end{equation}
similar to the trace property for Witt vectors as shown in Proposition \ref{prop:traceinvariance}. 
This follows from the fact that $K_0^{\operatorname{cyc}}(R;M)$ has the trace property, which can be seen by applying some basic localisation sequences. However, we have not been able to construct the isomorphism \eqref{trace_rat} directly from the definition of the rational Witt vectors. 

If one assumes the trace property for rational Witt vectors then one can give more conceptual definitions of the determinant and the characteristic element valued in $W^{\rat}(R;M)$ similar to the constructions for non-rational  Witt vectors described at the beginning of the section.  

\end{rem}

\phantomsection\addcontentsline{toc}{section}{References} 
\bibliographystyle{amsalpha}
\bibliography{bib}

\noindent
\begin{tabular}{l}
Emanuele Dotto\\
Mathematical Institute, The University of Warwick\\
\textit{e-mail address:} \href{mailto:Emanuele.Dotto@warwick.ac.uk}{Emanuele.Dotto@warwick.ac.uk}
\end{tabular}
\vspace{.5cm}
\\
\begin{tabular}{l}
Achim Krause\\
Mathematisches Institut, Universit\"at M\"unster\\
\textit{e-mail address:}  \href{mailto:nikolaus@uni-muenster.de}{nikolaus@uni-muenster.de}
\end{tabular}
\vspace{.5cm}
\\
\begin{tabular}{l}
Thomas Nikolaus\\
Mathematisches Institut, Universit\"at M\"unster\\
\textit{e-mail address:}  \href{mailto:krauseac@uni-muenster.de}{krauseac@uni-muenster.de}
\end{tabular}
\vspace{.5cm}
\\
\begin{tabular}{l}
Irakli Patchkoria\\
Department of Mathematics, University of Aberdeen \\
\textit{e-mail address:} \href{mailto:irakli.patchkoria@abdn.ac.uk}{irakli.patchkoria@abdn.ac.uk}
\end{tabular}

\end{document}